\documentclass[sort,hidelinks,AMA,Times1COL]{WileyNJDv5} 
\usepackage{dsfont}
\usepackage{algorithm}
\usepackage{algpseudocode}
\usepackage{accents}
\usepackage{pgfplots}
\usepackage{siunitx}
\usepgfplotslibrary{fillbetween}

\newcommand*{\vm}[1]{\boldsymbol{#1}}
\newcommand*{\trans}{^{\mathrm T}}
\newcommand*{\dd}{\mathrm{d}}
\newcommand*{\partialx}{\partial_{\vm x_i}}
\newcommand*{\partialu}{\partial_{\vm u_i}}
\newcommand{\neighs}{j \in \mathcal{N}_i}
\newcommand{\send}{j \in \mathcal{N}_i^\leftarrow}
\newcommand{\rec}{j \in \mathcal{N}_i^\rightarrow}

\newcommand{\agents}{i \in \mathcal{V}}
\newcommand*{\Ni}[1]{\vm{#1}_{\mathcal N_i^\leftarrow}}
\newcommand*{\Nj}[1]{\vm{#1}_{\mathcal N_j^\leftarrow}}

\newcommand*{\inds}{^{*}}
\newcommand*{\indqp}{^{q-1}}
\newcommand*{\indq}{^{q}}
\newcommand*{\indqk}{^{q_k}}
\newcommand*{\indqkp}{^{q_k-1}}
\newcommand*{\timetau}{\tau \in [0,\,T]}
\newcommand*{\timetausample}{\tau \in [0,\,\Delta t)}
\newcommand*{\stt}{\operatorname{s.t.}}
\newcommand\munderbar[1]{\underaccent{\bar}{#1}}
\newcommand*{\initstate}{\vm {\munderbar x}_k}

\algrenewcommand\ALG@beginalgorithmic{\relax}

\definecolor{myblue}{rgb}{0.00000,0.44700,0.74100}%
\definecolor{myblue2}{rgb}{0.3010, 0.7450, 0.9330}%
\definecolor{myred}{rgb}{0.85000,0.32500,0.09800}%
\definecolor{myyellow}{rgb}{0.9290 0.6940 0.1250}%
\definecolor {mypurple} {rgb} {0.4940 0.1840 0.5560}


\articletype{Research Article}%

\received{Date Month Year}
\revised{Date Month Year}
\accepted{Date Month Year}
\journal{Journal}
\volume{00}
\copyyear{2024}
\startpage{1}

\raggedbottom

\begin{document}

\title{Sensitivity-Based Distributed Model Predictive Control for Nonlinear Systems under Inexact Optimization}

\author[1]{Maximilian Pierer von Esch}

\author[1]{Andreas V\"olz}

\author[1]{Knut Graichen}

\authormark{Pierer v. Esch \textsc{et al.}}
\titlemark{Sensitivity-Based Distributed Model Predictive Control for Nonlinear Systems under Inexact Optimization}

\address[]{\orgdiv{Chair of Automatic Control}, \orgname{Friedrich-Alexander-Universität Erlangen-Nürnberg}, \orgaddress{Erlangen, \country{Germany}}}

\corres{Maximilian Pierer von Esch, Chair of Automatic Control, Friedrich-Alexander-Universität Erlangen-Nürnberg, 91058~Erlangen, Germany. \email{maximilian.v.pierer@fau.de}}

\fundingInfo{Deutsche Forschungsgemeinschaft (DFG, German Research Foundation)
	under project no.Gr 3870/6-1.}

\abstract[Abstract]{This paper presents a distributed model predictive control (DMPC) scheme for nonlinear continuous-time systems. The underlying distributed optimal control problem is cooperatively solved in parallel via a sensitivity-based algorithm. The algorithm is fully distributed in the sense that only one neighbor-to-neighbor communication step per iteration is necessary and that all computations are performed locally. Sufficient conditions are derived for the algorithm to converge towards the central solution. Based on this result, stability is shown for the suboptimal DMPC scheme under inexact minimization with the sensitivity-based algorithm and verified with numerical simulations. In particular, stability can be guaranteed with either a suitable stopping criterion or a fixed number of algorithm iterations in each MPC sampling step which allows for a real-time capable implementation.}

\keywords{Distributed model predictive control, sensitivities, sensitivity-based distributed optimal control, nonlinear systems, networked systems, stability, suboptimality}
\jnlcitation{\cname{%
		\author{M. Pierer von Esch}, 
		\author{A. V\"olz}, 
		\author{K. Graichen}} 
	\ctitle{Sensitivity-Based Distributed Model Predictive Control for Nonlinear Systems under inexact optimization}}

\maketitle

\renewcommand\thefootnote{}
\footnotetext{\textbf{Abbreviations:} ADMM, alternating direction multiplier method; CLF, control Lyapunov function; DMPC, distributed model predictive control; MPC, model predictive control; OCP, optimal control problem}

\renewcommand\thefootnote{\fnsymbol{footnote}}
\setcounter{footnote}{1}

\section{Introduction}
\label{intro}
Model predictive control (MPC) has emerged as a powerful strategy for controlling both linear and nonlinear systems, demonstrating its versatility in handling constraints and optimizing cost functions.\cite{Rawlings,Mayne,Grune} However, confronted with large-scale systems, a centralized MPC approach for the entire system is often impractical or undesirable. This limitation may be due to the unavailability of a centralized control entity with sufficient computing resources, scalability concerns, or communication restrictions imposed by a given communication topology. These restrictions in combination with a shift towards networked and decentralized control architectures have led to the concept of distributed MPC (DMPC) in which the central MPC controller is replaced by local MPCs, which control the single subsystems of the global system.\cite{Negenborn,Camponogara,Christofides,AlGerwi} The subsystems together with the local MPCs then form a network of so-called agents. 

The development of a DMPC scheme heavily depends on the considered problem setting since various sorts of couplings between subsystems and communication topologies give rise to different requirements for the DMPC controller.\cite{Christofides}
For instance, the subsystems can be dynamically coupled in the sense that the dynamics of the individual subsystems depend on other subsystems' states and/or inputs. Such couplings typically arise from physically connected subsystems found in large-scale processing plants \cite{Scheu,Liu,Venkat2}, infrastructure networks such as smart grids\cite{Jiang,Braun,Venkat} and water distribution networks\cite{Leirens,trnka,hentzelt2} or coupled mechanical systems, for example, found in cooperative payload scenarios\cite{Wehbeh,Mulagaleti}. A majority of the DMPC literature considers linear discrete-time systems, in which the subsystems are assumed to be linearly coupled to the state and/or control of their neighbors.\cite{Kohler,Conte,Farina,Giselsson,Rostami} In recent years, however, also dynamically coupled nonlinear continuous-time systems have been the focus of research. \cite{Bestler,Burk,Burk2,Mirzaei,Dunbar}

In other settings, the global system is composed of a group of dynamically independent subsystems whose dynamics only depend on their own states and controls. Typically, this is the case in multi-agent networks in which the agents need to solve different cooperative tasks like formation control\cite{Zhihao,Zhao}, platooning\cite{LiuP}, synchronization and consensus\cite{Muller2}, flocking\cite{Lyu} or coverage problems\cite{Farina2}. These cooperative tasks are then often formulated in terms of coupled cost functions and/or constraints. The difficulty in this regard is that the control task usually differs from the classical setpoint stabilization scenario.   

Finally, the available communication topology has a major influence on the DMPC scheme since the communication topology in general does not need to correspond to the coupling structure which results in different design requirements. A large share of DMPC schemes considers either neighbor-to-neighbor communication where the communication and coupling topology are identical\cite{Farina,Burk2,Huber,Conte,Costantini3,Pierer}, star networks in which the agents communicate only with a central node \cite{Pierer2,Belgioioso} or system-wide communication which allows the agents to communicate with all other agents or a subset thereof\cite{Engelmann}. Consequently, different combinations of these communication structures are possible. Reducing communication is a vital aspect of DMPC, as previous studies have reported that communication can be responsible for a major part of the overall computation time \cite{Burk2,Burk4,Pierer,Rostami2}. Neighbor-to-neighbor communication is favorable in that regard as it lowers the system-wide communication load and provides a more flexible and scalable communication structure without a single point of failure.

In addition to different problem settings and communication requirements, the inherent characteristics of the DMPC controller can vary drastically in how neighboring subsystems are considered. Since a comprehensive and comparative overview of different DMPC structures is out of scope of this paper, we refer the reader to the survey papers \cite{Negenborn,Christofides,Muller}. However, one of the most promising approaches is distributed optimization, where the central MPC problem is solved iteratively and in parallel by each agent. The idea is that after enough algorithm iterations in each MPC step, the local solutions will converge to the centralized one leading to the same performance as the centralized MPC scheme which is known as cooperative DMPC.\cite{Muller} 

Distributed optimization algorithms require suitable decomposition schemes such that the central problem can be decomposed into smaller subproblems which are then solved locally and in parallel by the individual agents. For an introductory overview of different optimization methods used in DMPC, we refer to the surveys \cite{Stomberg,Braun2}. A majority of the distributed optimization methods in DMPC rely on a decomposition of the dual problem, where coupling constraints are taken into account by Lagrangian multipliers. Notable algorithms that fall into this category are dual decomposition\cite{Giselsson}, the alternating direction method of multipliers (ADMM) \cite{Boyd}, decentralized interior point methods \cite{Engelmann}, the augmented Lagrangian-based alternating direction inexact Newton method \cite{Houska} and its bi-level distributed variant\cite{Engelmann2}. A large share of DMPC schemes employ these dual decomposition-based optimization methods.\cite{Farokhi,Bestler,Burk2,Burk,Giselsson,Zhao,Rostami,Kogel,Kohler} However, the disadvantage of dual optimization methods such as ADMM lies in the fact that it achieves primal feasibility only in the limit, strong duality is required to hold, and slow convergence for highly coupled systems has been observed in practice.\cite{Pierer,Costantini3}  

Another class of decomposition methods concerns primal decomposition. These approaches are characterized by directly exchanging required quantities between neighboring subsystems rather than using Lagrangian multipliers to append the coupling constraints.\cite{Venkat3,Venkat,Scheu,Doan,Stewart,Stewart2} A comparison of primal and dual decomposition methods in DMPC is for example given in references \cite{Stomberg,Braun2,Paulen,Pflaum}. An advantage of primal approaches is that strong duality is not required to hold and that the iterates are primal feasible under certain conditions. However, it needs to be ensured that the distributed solution corresponds with the centralized one. Most notable, sensitivity-based approaches have recently emerged as viable distributed optimization algorithms for DMPC.\cite{Scheu,Scheu2,Alvarado,Huber,Huber2,Pierer} All these works follow the same ideas which can be traced back to the 1970s \cite{Cohen,Mesarovic}. In particular, sensitivity-based approaches have been utilized for automatic building control \cite{Darure,Huber2}, water distribution networks \cite{Alvarado}, or process control \cite{Scheu}. Furthermore, the contributions\cite{Pierer,Pierer3} show that the sensitivities can be calculated locally by each agent in a computationally efficient manner using optimal control theory resulting in a fully distributed algorithm with only neighbor-to-neighbor communication which overcomes the major drawback that the agents usually require knowledge of the overall system dynamics, additional communication steps or a central coordination step. In addition, convergence of the sensitivity-based algorithm in the context of DMPC has been investigated for the linear discrete-time case \cite{Scheu}, an algorithm variant considering the complete system dynamic at agent level \cite{Huber2} and most recently for nonlinear continuous-time systems in a fully distributed setting \cite{Pierer3} with an inexact optimization of the underlying OCP. This makes the sensitivity-based approach an interesting option for nonlinear DMPC.

Common approaches to ensure the stability of DMPC schemes utilizing distributed optimization algorithms, especially for dynamically coupled systems, are to use classical MPC terminal ingredients such as local terminal costs and set constraints which can be extended to the distributed setting.\cite{Muller3,Muller4, Conte,Costantini} Compared to the central MPC case, synthesis of the terminal costs and sets is non-trivial in the distributed case and usually requires optimization-based approaches \cite{Conte,Costantini}, although iterative algorithms exist \cite{Costantini2}.  However, terminal set constraints that are designed for stability requirements are often restrictive and unfavorable from a numerical viewpoint as they lead to an increased computational load compared to an MPC formulation without terminal constraints \cite{Graichen2} which extends to DMPC \cite{Bestler}. Another approach is relaxed dynamic programming, in which the terminal conditions are dropped and stability is guaranteed via a relaxed descent condition \cite{Grune2} which has found application in DMPC.\cite{Farokhi} An important aspect of a (D)MPC scheme is to ensure stability even after a finite number of optimization iterations\cite{Graichen2,Bestler}, often referred to as stability under inexact optimization \cite{Stomberg}, as this limits the time-consuming communication steps. Typically, a suitable stopping criterion or a lower bound on the required iterations is used to guarantee stability. \cite{Giselsson3,Bestler,Kohler,Belgioioso,Jane}

This contribution presents a fully distributed DMPC scheme for nonlinear continuous-time systems that are coupled in dynamics and/or cost functions via the states of their respective neighbors. The scheme is fully distributed in the sense that only neighbor-to-neighbor communication and local computations are required. The centralized MPC problem is formulated without terminal set constraints which allows for a computationally efficient solution.\cite{Graichen,Graichen2} Stability is ensured via terminal cost functions which act as a control-Lyaponuv function (CLF) and are synthesized offline via an optimization-based approach such that a separable structure is obtained. The DMPC scheme is realized via distributed optimization with a sensitivity-based primal decomposition approach. In order to limit the communication between subsystems, the algorithm is terminated either after a suitable stopping criterion or a fixed number of iterations are reached. Prematurely stopping the algorithm leads to a suboptimal control solution and subsequently a mismatch between the optimal centralized and distributed solution. Although related schemes have been investigated in \cite{Bestler,Giselsson3,Kohler}, the approach presented in this paper requires far less assumptions and considers nonlinear systems. In particular, by establishing linear convergence of the sensitivity-based algorithm, exponential stability and incremental improvement of the DMPC scheme are shown if either the stopping criterion is sufficiently small or if a certain number of fixed algorithm iterations are performed in each MPC step.

The paper is structured as follows: In Section \ref{sec:prob_state} the problem statement and system class are introduced for which the central optimal control problem (OCP) is stated. Subsequently, the optimal MPC and DMPC control strategy is reviewed and it is shown how separable terminal costs in the continuous-time nonlinear case can be synthesized. Following the central considerations, the distributed solution via the sensitivity-based approach is discussed in Section \ref{sec:dist_sol}. In particular, the notion of sensitivities is derived via classical optimal control theory, and the sensitivity-based DMPC scheme is presented. Sufficient conditions are derived such that the algorithm converges towards the central optimal solution. The stability of the closed-loop system under the DMPC control law is analyzed in Section \ref{sec:DMPC_stab}. The algorithm is evaluated via numerical simulations and compared to ADMM in Section \ref{sec:num_eval} before the paper is summarized in Section \ref{sec:concl}.

Several conventions are used throughout this text. The standard
2-norm $ \| \vm q \| := \| \boldsymbol q \|_2 = (|q_1|^2 + ...+
|q_n|^2)^{\frac{1}{2}} $ is used for vectors $ \boldsymbol q \in \mathds{R}^n$, while for
time functions $\boldsymbol q: [0,T] \rightarrow
\mathds{R}^n$, the vector-valued $L_\infty$-norm with 
$\|\vm q\|_{L_\infty} = \max_{t \in [0,T]} \| \vm q(t)\|$ is utilized along with
the corresponding function space $L_\infty(0,T;P)$ such that 
$\vm q \in L_\infty(0,T;P)$ implies $\|\vm q\|_{L_\infty}<\infty$ on $ t\in [0,T]$ and
${\vm q(t)\in P\subseteq\mathds{R}^n,\, t\in[0,T]}$. An
$r$-neighborhood of a point $ \vm v_0 \in \mathds{R}^v$ is denoted
as $\mathcal{B}(\vm v_0,r):=\{\vm v \in \mathds{R}^v \,|\, \|\vm v -
\vm v_0\|\leq r\}$, while an $r$-neighborhood to a set $\mathcal{S}
\subset \mathds{R}^v$ is defined as $\mathcal{S}^r := \bigcup_{\vm
	s_0 \in \mathcal{S}}\mathcal{B}(\vm s_0,r)$. The stacking and
reordering of individual vectors $ \vm v_i \in \mathds{R}^{v_i},\, i
\in \mathcal{V}$ from a set $ \mathcal{V}$ is defined as $ [\vm 
v_i]_{\agents}$. The partial derivative of a function $f(\vm x,\vm y)$ 
w.r.t.\ to one of its arguments $\vm x$ is denoted as 
$\partial_{\vm x}\, f(\vm x, \vm y)$. For given iterates $(\vm x^k, \vm
y^k)$ at step $k$ of an arbitrary algorithm, the short-hand notation
$\partial_{\vm x}\, f^k = \partial_{\vm x} \,f( \vm x, \vm y)\big|_{\vm x = \vm x^k, \vm y = \vm y^k}$ is used when applicable. The argument of the time functions is omitted in the paper when it is convenient. Finally, system variables are underlined (e.g. $\munderbar x $)  to distinguish them from MPC-internal variables.

%

\section{Problem statement and (D)MPC strategy}
\label{sec:prob_state}
The structure of multi-agent systems is conveniently described by a graph  $\mathcal{G} = (\mathcal{V},\mathcal{E})$ in which the vertices $\mathcal{V} = \{1,\hdots,N\}$ represent single dynamic subsystems referred to as agents and the edges $\mathcal{E} \subset \mathcal{V} \times \mathcal{V}$ reflect a coupling between two agents. In this paper, the dynamics of an agent $\agents $ with states $ \vm{\munderbar x}_i \in \mathds{R}^{n_i}$ and controls $  \vm{\munderbar u}_i \in \mathds{R}^{m_i}$ are described by the following nonlinear neighbor-affine system
\begin{align}\label{eq:agent_dyn}
	\vm{\dot{\munderbar x}}_i &= \vm f_{ii}(\vm{\munderbar x}_i, \vm{\munderbar u}_i)+ \sum_{\send} \vm f_{ij}(\vm{\munderbar x}_i, \vm{\munderbar x}_j) =:\vm f_i(\vm{\munderbar x}_i, \vm{\munderbar u}_i, \Ni{ \munderbar x})\,, \quad \vm{\munderbar x}_i(0) = \vm{\munderbar x}_{i,0}\,.
\end{align}
The coupling to the neighbors is given by the states $\vm{\munderbar x}_j \in \mathds{R}^{n_j}$ using the stacked notation $ \Ni{\munderbar x}:=[\vm{\munderbar x}_j]_{\send} \in \mathds{R}^{p_i}$ with $ p_i = \sum_{\send} n_j$ and $ p = \sum_{\agents} p_i$ where the set of sending neighbors $\mathcal{N}^\leftarrow_i = \{j \in \mathcal{V}\,:\,(j,i) \in \mathcal{E},\, j\neq i\}$ describes all the agents influencing agent $\agents$ while the set of receiving neighbors $\mathcal{N}_i^\rightarrow = \{j \in \mathcal{V}\,:\,(i,j) \in \mathcal{E},\, i\neq j \}$ refers to the agents being influenced by agent $\agents$. The union of both sets defines the neighborhood $\mathcal{N}_i = \mathcal{N}^\leftarrow_i \cup \mathcal{N}_i^\rightarrow$. In addition, it is allowed that the agents are able to communicate bi-directionally with all neighbors $\neighs$. Neighbor-affinity means that in addition to the local dynamic functions $\vm f_{ii}: \mathds{R}^{n_i} \times \mathds{R}^{m_i} \rightarrow \mathds{R}^{n_i}$, the neighbors' dynamic functions $\vm f_{ij}: \mathds{R}^{n_i} \times \mathds{R}^{n_j} \rightarrow \mathds{R}^{n_i}$ enter additively into the dynamics~\eqref{eq:agent_dyn} and only depend on exactly one other state $\vm{ \munderbar x}_j$, $\send$ .\cite{Burk2} The  individual controls $ \vm{\munderbar u}_i$ are constrained to the compact and convex constraint sets $\vm{\munderbar u}_i(t) \in \mathbb{U}_i \subset \mathds{R}^{m_i}$, $t\geq0$ which contain the origin $\vm 0 \in \mathds{R}^{m_i}$ in their interior.
The coupled subsystems \eqref{eq:agent_dyn} can equivalently be written in a centralized form 
\begin{align}\label{eq:centr_dyn}
	\vm{\dot{\munderbar x}} = \vm f(\vm{\munderbar x}, \vm{\munderbar u})\,, \quad  \vm{\munderbar x}(0) = \vm{\munderbar x}_0
\end{align}
with the central dynamics $ \vm f= [\vm f_i]_{\agents}$ as well as stacked state $\vm{\munderbar x} =  [\vm{\munderbar x}_i]_{\agents} \in \mathds{R}^n$ and control $\vm{\munderbar u} = [\vm{\munderbar u}_i]_{\agents} \in \mathds{R}^m$ vectors of dimension $n = \sum_{\agents} n_i$ and $m = \sum_{\agents} m_i$, respectively. The stacked initial state is given by $\vm{\munderbar x}_0 = [\vm{\munderbar x}_{i,0}]_{\agents} \in \mathds{R}^n$. The local input constraints are summarized as $ \vm{\munderbar u}(t) \in \mathbb{U} \subset \mathds{R}^m$ with the set $\mathbb{U}$ defined as the Cartesian product $ \mathbb{U} = \prod_{\agents} \mathbb{U}_i$. Furthermore, it is assumed that the local dynamics \eqref{eq:agent_dyn} and consequently the central dynamics \eqref{eq:centr_dyn} as well as cost functions \eqref{eq:def_central_cost} are at least twice continuously differentiable w.r.t. their arguments. In addition, system \eqref{eq:centr_dyn} must yield a bounded solution for any initial condition $\vm{\munderbar x}_0 \in \mathds{R}^n$ and input $\vm{\munderbar u}(t) \in \mathbb{U}$, $t \in [0, T]$ for some $0<T<\infty$. Without loss of generality, the control tasks consist in controlling each subsystem~\eqref{eq:agent_dyn} to the origin, i.e. $ \vm f_i(\vm 0, \vm 0, \vm 0) = \vm 0$ or equivalently  $\vm f(\vm 0, \vm 0) =  \vm 0 $ in the centralized form \eqref{eq:centr_dyn}. 
\subsection{Central optimal control problem and MPC stability}
\label{subsec:central_OCP}
In this section, well known centralized MPC results are summarized which will form the basis for the following investigations in the distributed case. \cite{Rawlings,Graichen2,Graichen3,Bestler} In particular, the central MPC problem relies on the repeated online solution of the following OCP at each sampling point $t_k = k \Delta t$, $ k\in \mathds{N}_0$
\begin{subequations}\label{eq:central_ocp}
	\begin{align}
		\min_{ \vm u} \quad&
		J(\vm u;\vm{\munderbar x}_k)= V(\vm x(T)) + \int_0^T l(\vm x(\tau), \vm u(\tau))\,\dd \tau
		\label{eq:central_costFunction}	\\
		~\stt \quad&	\vm{\dot{x}} = \vm f(\vm x, \vm u)\,, \quad\vm x(0) =\vm{\munderbar x}_{k}	\label{eq:central_ocp_dyn}\\
		&\vm u(\tau) \in \mathbb{U}\,,\quad \timetau \label{eq:central_input_constr}
	\end{align}
\end{subequations}
where $\vm{\munderbar x}(t_k) = \vm{\munderbar x}_k = [ \vm{\munderbar x}_{i,k}]_{\agents} $ is the state of system \eqref{eq:centr_dyn} at time $t = t_k$  and  $\Delta t>0$ is the sampling time. The cost function \eqref{eq:central_costFunction} with horizon length $T>0$ consists of the separable terminal and integral costs
\begin{align} \label{eq:def_central_cost}
	V(\vm x(T)) = \sum_{\agents} V_i(\vm x_i(T))\,, \quad 	l(\vm x, \vm u) = \sum_{\agents} l_i(\vm x_i, \vm u_i, \Ni{x})
\end{align}
with $V_i: \mathds{R}^{n_i} \rightarrow \mathds{R}^+_0 $ and $ l_i: \mathds{R}^{n_i} \times \mathds{R}^{m_i} \times \prod_{\neighs}\mathds{R}^{n_j} \rightarrow \mathds{R}^+_0$. Similar to \eqref{eq:agent_dyn}, the coupled integral cost function \eqref{eq:def_central_cost} is given in the neighbor-affine form
\begin{equation} \label{eq:agent_cost}
	l_i(\vm x_i, \vm u_i, \Ni{x}):= l_{ii}(\vm x_i, \vm u_i) + \sum_{\send} l_{ij}(\vm x_i, \vm x_j)
\end{equation}
with the local part $ l_{ii}: \mathds{R}^{n_i} \times  \mathds{R}^{m_i} \rightarrow \mathds{R}_0^+$ and the coupled part $l_{ij}: \mathds{R}^{n_i} \times  \mathds{R}^{n_j} \rightarrow \mathds{R}_0$. The terminal and integral cost additionally satisfy the quadratic bounds
\begin{align} \label{eq:bound_costs}
	m_l ( \|\vm x\|^2 + \| \vm u\|^2) \leq l(\vm x,\vm u) \leq M_l(\|\vm x\|^2 + \| \vm u\|^2)\,, \quad 
	m_V \|\vm x\|^2 \leq V(\vm x) \leq M_V \|\vm x\|^2
\end{align}
for some constants $M_l \geq m_l>0$ and $M_V \geq m_V>0$. The admissible input space to OCP \eqref{eq:central_ocp} then follows as $\mathcal{U}=L_{\infty}(0,T;\mathbb{U})$. Furthermore, there exists a non-empty and open set $\Gamma \subset \mathds{R}^n$ with $ \vm 0 \in \Gamma$ such that for all $\vm{\munderbar x}_{k} \in \Gamma $, a minimizing and unique solution $(\vm x\inds(\tau; \vm{\munderbar x}_k)$, $\vm u\inds(\tau; \vm{\munderbar x}_k))$, $\timetau$ of \eqref{eq:central_ocp} exists. The existence of an optimal solution is not too restrictive as no terminal constraints are considered and all functions are assumed to be continuously differentiable. Throughout this paper, the central necessary optimality conditions to OCP \eqref{eq:central_ocp} are needed. To this end, define the central Hamiltonian as
	\begin{equation} \label{eq:central_Hamiltonian}
		H(\vm x, \vm u, \vm \lambda) = l(\vm x, \vm u) + \vm \lambda\trans \vm f(\vm x, \vm u)
	\end{equation}
	with the adjoint state $\vm \lambda \in \mathds{R}^n$. Then, the first-order optimality conditions require that there exist optimal adjoint states $\vm \lambda\inds(\tau; \vm{\munderbar x}_k)$, $\timetau$ such that the canonical boundary value problem
	\begin{subequations}
		\begin{align}
			\vm{\dot x}\inds(\tau; \initstate) &= \vm f(\vm x\inds(\tau; \initstate), \vm u\inds(\tau; \initstate))\,, &&\quad  \vm x\inds(0; \initstate) = \initstate \label{eq:opt_statedyn} \\
			\vm{ \dot \lambda}\inds(\tau; \initstate) &= -\partial_{\vm x} H(\vm x\inds(\tau; \vm{\munderbar x}_k), \vm u\inds(\tau; \vm{\munderbar x}_k), \vm \lambda\inds(\tau; \vm{\munderbar x}_k))=: \vm G(\vm x\inds, \vm u\inds, \vm \lambda\inds)\,, &&\quad \vm \lambda\inds(T; \initstate) = \partial_{\vm x} V(\vm x\inds(T; \initstate)) \label{eq:opt_adjointdyn}
		\end{align}
		is satisfied and that $ \vm u\inds(\tau;\initstate)$ minimizes the Hamiltonian \eqref{eq:central_Hamiltonian}
		\begin{align} \label{eq:Pontraygin}
			\min_{\vm u \in \mathbb{U}} H(\vm x\inds(\tau; \vm{\munderbar x}_k), \vm u, \vm \lambda\inds(\tau; \vm{\munderbar x}_k))\,, \quad \timetau\,.
		\end{align}
	\end{subequations}
The corresponding optimal (minimal) cost is denoted as
\begin{align}\label{eq:opt_cost}
	J^*(\initstate) := J(\vm u^*(\cdot, \initstate); \initstate) = \sum_{\agents} J_i(\vm u_i\inds(\cdot; \initstate); \initstate)\,,
\end{align}
where $J_i(\vm u_i\inds(\cdot; \initstate); \initstate) =: J_i\inds(\initstate)$ are the optimal agent costs. 
MPC strategies usually assume that this optimal solution to OCP~\eqref{eq:central_ocp} is exactly known at each sampling point $t_k$. Then, the first part of the optimal control trajectory on the sampling interval $[t_k,t_{k+1})$ is applied to the centralized system \eqref{eq:centr_dyn} which can be interpreted as a nonlinear control law of the form
\begin{align}\label{eq:MPC_control_law}
	\vm{\munderbar u}(t_k +\tau) = \vm u^*(\tau; \initstate) =: \vm \kappa\inds(\vm x^*(\tau;\initstate ); \initstate) \,, \quad \timetausample
\end{align}
with sampling time $\Delta t <T$ and  $\vm \kappa\inds( \vm 0;\initstate) =  \vm 0$.
In the the next MPC time step $t_{k+1}$, the process of solving \eqref{eq:central_ocp} is repeated again with $\vm x(0) = \vm{ \munderbar x}_{k+1}$ that (in the nominal case) is given by $\vm{\munderbar x}_{k+1} = \vm x^*(\Delta t; \initstate)$. 
Since the central MPC problem is formulated without terminal constraints, it is often assumed that the terminal cost $V(\vm x)$ in \eqref{eq:bound_costs} represents a local CLF on an invariant set $\Omega_{\beta} \subset \Gamma$ containing the origin at its center.\cite{Graichen2,Jadbabaie}
\begin{assumption} \label{ass:CLF}
	There exists a feedback law $\vm u = \vm r(\vm x) \in \mathbb{U}$ and a non-empty compact set $\Omega_\beta :=\{ \vm x\in \Gamma: V(\vm x)\leq \beta\} \subset \Gamma $ containing the origin at its center such that for all $\vm x \in \Omega_\beta$ the CLF inequality
	\begin{equation} \label{eq:CLF_inequality}
		\dot{V}(\vm x, \vm r(\vm x)) + l(\vm x, \vm r(\vm x))\leq 0
	\end{equation}
	with $ \dot{V}(\vm x, \vm r(\vm x)) = \frac{\partial V}{\partial \vm x}\vm f(\vm x,  \vm r(\vm x))$ is satisfied.
\end{assumption}
A classical approach in MPC is to choose $V(\vm x)$ as the quadratic function $V(\vm x) = \vm x\trans \vm P\vm x$ where $\vm P\succ0$ follows from the solution of a Lyapunov or Riccati equation, given that the system \eqref{eq:central_ocp} is stabilizable around the origin. This results in a linear feedback law $ \vm r(\vm x) = \vm K \vm x$ which stabilizes the nonlinear system \eqref{eq:centr_dyn} on the (possibly small) invariant set $\Omega_{\beta}$.\cite{Chen,Parisini,Michalska} However, due to the structural constraint~\eqref{eq:def_central_cost} concerning the terminal cost $ V(\vm x)$ and the requirement that only direct neighbors are allowed to communicate, the design is more involved in the distributed setting.\cite{Conte,Costantini} Therefore, a simple optimization-based approach to design $V(\vm x)$ which considers the separability constraint \eqref{eq:def_central_cost} and to synthesize a structured control law, i.e. $\vm r(\vm x)= [\vm r_i( \vm x_i,\vm x_{\mathcal{N}_i})]_{\agents}$ with $ \vm x_{\mathcal{N}_i}:=[\vm x_j]_{\neighs}$,  will be discussed in Section~\ref{subsec:DMPC_stab_opt}. 
Throughout this paper, the following compact level set of the optimal cost 
\begin{align} \label{eq:reg_of_attr}
	\Gamma_\alpha = \{\vm x \in \Gamma : J^*(\vm x) \leq \alpha\}\,, \quad  \alpha := \beta\big(1+\frac{m_l}{m_V} T\big)\,,
\end{align}
which characterizes the domain of attraction of the MPC scheme, will be needed. Based on the preceding assumptions, the following stability results for the MPC scheme without terminal constraints under the centralized control law \eqref{eq:MPC_control_law} can be shown.\cite{Graichen2,Limon}
\begin{theorem} \label{th:centralMPC}
	Suppose that the Assumption \ref{ass:CLF} is satisfied. Then, for $\vm{ \munderbar x}_0 \in \Gamma_\alpha$ it holds that $\initstate \in \Gamma_\alpha$ for all MPC steps and $\vm x\inds(T; \initstate ) \in \Omega_\beta$ with $\Omega_\beta \subset \Gamma_\alpha$. Furthermore, the optimal cost in the next MPC step decreases according to
	\begin{equation} \label{eq:th1}
		J\inds(\vm x\inds(\Delta t; \initstate))\leq J\inds (\initstate) - \int_0^{\Delta t} l \big(\vm x\inds(\tau; \initstate), \vm u\inds(\tau; \initstate)\big)\,\dd \tau 
	\end{equation}
	for all $ \initstate \in \Gamma_\alpha$ and the origin of the system \eqref{eq:centr_dyn} under the centralized MPC control law \eqref{eq:MPC_control_law} is asymptotically stable in the sense that the closed-loop trajectories satisfy $\lim_{t \rightarrow \infty} \|\vm{ \munderbar x}(t)\| = 0$. 
\end{theorem}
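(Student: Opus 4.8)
The plan is to use the optimal cost $J^*$ from \eqref{eq:opt_cost} as a Lyapunov function for the sampled closed loop, in the classical manner for MPC without terminal constraints \cite{Graichen2,Limon}. The argument decomposes into three parts, carried out in this order: (i) a geometric lemma stating that $\vm x^*(T;\initstate)\in\Omega_\beta$ whenever $\initstate\in\Gamma_\alpha$, together with $\Omega_\beta\subset\Gamma_\alpha$; (ii) the one‑step descent estimate \eqref{eq:th1}; and (iii) forward invariance of $\Gamma_\alpha$ and attractivity of the origin.

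For part~(i) I would argue by contradiction. Assume $\initstate\in\Gamma_\alpha$ but $V(\vm x^*(T;\initstate))>\beta$. First show that the optimal state trajectory cannot visit $\Omega_\beta$ on $[0,T]$: if $\vm x^*(\tau_1;\initstate)\in\Omega_\beta$ for some $\tau_1\le T$, replace $\vm u^*$ on $[\tau_1,T]$ by the feedback $\vm r(\cdot)$ from Assumption~\ref{ass:CLF}; by the invariance of $\Omega_\beta$ the trajectory stays in $\Omega_\beta$, and integrating the CLF inequality \eqref{eq:CLF_inequality} bounds the resulting tail plus terminal cost by $V(\vm x^*(\tau_1;\initstate))\le\beta<V(\vm x^*(T;\initstate))$, hence strictly below the tail cost of $\vm u^*$, contradicting optimality (uniqueness) of $\vm u^*$. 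Therefore $V(\vm x^*(\tau;\initstate))>\beta$ for all $\tau\in[0,T]$, so by the upper bound in \eqref{eq:bound_costs} one has $\|\vm x^*(\tau;\initstate)\|^2\ge V(\vm x^*(\tau;\initstate))/M_V$, and hence $J^*(\initstate)\ge\int_0^T l(\vm x^*,\vm u^*)\,\dd\tau+V(\vm x^*(T;\initstate))$ is bounded below by a horizon‑dependent multiple of $\beta$ that, by the calibration of $\alpha$ in \eqref{eq:reg_of_attr}, exceeds $\alpha$ — contradicting $\initstate\in\Gamma_\alpha$. Integrating \eqref{eq:CLF_inequality} from an arbitrary $\vm x\in\Omega_\beta$ over the whole horizon moreover gives $J^*(\vm x)\le V(\vm x)\le\beta\le\alpha$, i.e. $\Omega_\beta\subset\Gamma_\alpha$.

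For part~(ii), take $\initstate\in\Gamma_\alpha$, set $\vm x^+:=\vm x^*(\Delta t;\initstate)$, and construct an admissible input for OCP~\eqref{eq:central_ocp} started at $\vm x^+$ by concatenating the shifted tail $\vm u^*(\,\cdot+\Delta t;\initstate)$ on $[0,T-\Delta t)$ with $\vm r(\cdot)$ on $[T-\Delta t,T]$. This is admissible since, by part~(i), the concatenated trajectory reaches $\vm x^*(T;\initstate)\in\Omega_\beta$ at time $T-\Delta t$ and then remains there. Its cost is $\int_{\Delta t}^{T}l(\vm x^*,\vm u^*)\,\dd\tau$ plus the feedback‑stage cost plus $V$ at $T$; the latter two are bounded by $V(\vm x^*(T;\initstate))$ via \eqref{eq:CLF_inequality}, and since $\int_{\Delta t}^{T}l(\vm x^*,\vm u^*)\,\dd\tau=J^*(\initstate)-V(\vm x^*(T;\initstate))-\int_0^{\Delta t}l(\vm x^*,\vm u^*)\,\dd\tau$, the terminal terms cancel; optimality of $J^*(\vm x^+)$ then yields exactly \eqref{eq:th1}.

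For part~(iii), induction on $k$ using \eqref{eq:th1} and $l\ge0$ gives $J^*(\initstate)\le J^*(\vm{\munderbar x}_0)\le\alpha$, so $\initstate\in\Gamma_\alpha$ at every step, and part~(i) then gives $\vm x^*(T;\initstate)\in\Omega_\beta$ throughout. Telescoping \eqref{eq:th1} yields $\sum_{k\ge0}\int_0^{\Delta t}l(\vm x^*(\tau;\initstate),\vm u^*(\tau;\initstate))\,\dd\tau\le J^*(\vm{\munderbar x}_0)<\infty$, so this integral tends to zero, and by the lower bound in \eqref{eq:bound_costs} so does $\int_0^{\Delta t}\big(\|\vm x^*(\tau;\initstate)\|^2+\|\vm u^*(\tau;\initstate)\|^2\big)\,\dd\tau$. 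Because $\vm x^*(0;\initstate)=\initstate$ and, using $\vm f(\vm 0,\vm 0)=\vm 0$ with $\vm f$ locally Lipschitz and all trajectories confined to a fixed compact set (standing boundedness assumption), a Gr\"onwall estimate controls $\|\vm x^*(\tau;\initstate)\|$ on $[0,\Delta t]$ by $\|\initstate\|$ and the (vanishing) input contribution, it follows first that $\|\initstate\|\to0$ and then, propagating smallness across each sampling interval, that $\lim_{t\to\infty}\|\vm{\munderbar x}(t)\|=0$. I expect part~(i) to be the main obstacle — specifically the splicing of the CLF feedback onto the optimal trajectory together with the constant bookkeeping linking \eqref{eq:bound_costs}, \eqref{eq:CLF_inequality} and the definition of $\alpha$ in \eqref{eq:reg_of_attr}; parts~(ii) and~(iii) are then routine.
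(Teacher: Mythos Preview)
The paper does not give its own proof of this theorem but simply defers to the cited references \cite{Graichen2,Graichen3,Limon}, and your three-part sketch (terminal reachability via the CLF comparison argument, the shifted-tail descent estimate, and telescoping plus Barbalat-type attractivity) is precisely the standard argument from those references. Your instinct that the delicate step is part~(i)---matching the lower bound $J^*(\initstate)>\beta\big(1+\tfrac{m_l}{M_V}T\big)$ obtained from $V(\vm x^*(\tau;\initstate))>\beta$ against the definition of $\alpha$ in~\eqref{eq:reg_of_attr}---is right; check the $m_V$ versus $M_V$ in the paper's~\eqref{eq:reg_of_attr} carefully when you write it out, since the contradiction closes only with the correct constant.
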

Theorem \ref{th:centralMPC} and the corresponding proofs can be found in references\cite{Graichen2,Graichen3,Limon}. Asymptotic stability of the closed-loop trajectories follows from \eqref{eq:th1} using  Barbalat's Lemma and can be strengthened to exponential stability if additionally the optimal cost \eqref{eq:opt_cost} is continuously differentiable. Theorem \ref{th:centralMPC} will serve as the basis for further stability considerations in the distributed case. 

\subsection{Terminal cost design for distributed MPC}
\label{subsec:DMPC_stab_opt}
In this section it is discussed how the central MPC stability considerations of the previous section can be transferred to the distributed case and in particular how the separable terminal costs $V(\vm x) = \sum_{\agents} V_i(\vm x_i)$ and structured terminal controllers $\vm r(\vm x)= [\vm r_i( \vm x_i,\vm x_{\mathcal{N}_i})]_{\agents}$ can be synthesized in the considered nonlinear continuous-time setting such that stability is ensured. 
In cooperative distributed MPC, the central OCP \eqref{eq:central_ocp} is decomposed into smaller subproblems which are solved in parallel by the agents via a distributed optimization algorithm. After an optimal input has been found, each agent locally applies the controls
\begin{equation}\label{eq:dist_opt_controllaw}
	\vm{\munderbar u}_i(t_k + \tau ) = \vm u_i\inds(\tau; \initstate)\,, \quad \timetausample\,, \quad \agents
\end{equation}
to the subsystems \eqref{eq:agent_dyn}. Compared to the central case, the terminal feedback law $\vm r(\vm x)$ is not allowed to consider the complete system state in the distributed setting, i.e. $ \vm r_i(\vm x)$ for all $\agents$, since this violates the requirement of only neighbor-to-neighbor communication. Rather $\vm r_i$ can only involve neighboring states $ \vm x_j,\, \neighs$ which can be communicated, i.e. $\vm r_i( \vm x_i,\vm x_{\mathcal{N}_i})$. However, classical central MPC stability proofs, e.g. the works \cite{Graichen2,Chen,Graichen3,Jadbabaie}, rely on a full state feedback law. In addition, the required separability in \eqref{eq:def_central_cost} imposes an additional structural constraint on the CLFs. These two additional requirements need to be accounted for when transferring Theorem \ref{th:centralMPC} to the DMPC setting.
Therefore, it is necessary to replace Assumption \ref{ass:CLF} with the following strengthened assumption.
\begin{assumption}\label{ass:dist_CLF}
	There exists a structured feedback law $\vm u = [\vm r_i( \vm x_i,\vm x_{\mathcal{N}_i})]_{\agents} \in \mathbb{U}$ and a structured CLF, i.e. $V( \vm x) = \sum_{\agents} V_i(\vm x_i)$ such that the CLF inequality \eqref{eq:CLF_inequality} holds for all $[\vm x_i]_{\agents} \in \Omega_{\beta} = \{ \vm x \in \Gamma : V(\vm x) \leq \beta\}$. 
\end{assumption}
The assumption concerning the existence of such a structured feedback controller is typical in the DMPC setting \cite{Conte,Costantini,Costantini2,Kohler} and has recently been shown to be satisfied by a broad class of systems \cite{Zhang}. Furthermore, this assumption is not as restrictive as requiring local stabilizing controllers, i.e. $\vm u = [\vm r_i(\vm x_i)]_{\agents}$ \cite{Dunbar}. The derivation of general conditions for the existence of such (linear) structured feedback controllers is beyond the scope of this paper and poses a difficult problem in itself. Related literature includes the reference\cite{Siljak}, where conditions on the decentralized stabilizability of the linearized dynamics \eqref{eq:centr_dyn} are discussed, or the work\cite{Xu} where conditions on the coupling strength between agents are derived such that a structured controller exists. Furthermore, algorithms for designing structured feedback controllers for continuous-time systems can be found e.g. in the contributions \cite{Deroo,Maartensson}.
The following corollary reveals that Theorem \ref{th:centralMPC} holds in the same manner for the optimal distributed setting under the strengthened Assumption \ref{ass:dist_CLF}. 
\begin{corollary} \label{th:dist_opt_MPC}
	Suppose that Assumption \ref{ass:dist_CLF} holds. Then, the origin of the closed-loop system under the DMPC control law \eqref{eq:dist_opt_controllaw} is asymptotically stable for all $\vm {\munderbar x}_0 \in \Gamma_{\alpha}$ and the optimal cost \eqref{eq:opt_cost} decays asymptotically.
\end{corollary}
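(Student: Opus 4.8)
The plan is to show that Corollary \ref{th:dist_opt_MPC} follows almost immediately from Theorem \ref{th:centralMPC}, by observing that Assumption \ref{ass:dist_CLF} is a structured strengthening of Assumption \ref{ass:CLF} and that the cooperative DMPC control law with exact minimization coincides with the centralized one. In other words, nothing genuinely new has to be proven; the work is in making the reduction precise.

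\textbf{Step 1: the distributed optimal control law equals the centralized one.} In cooperative DMPC the agents jointly solve the central OCP \eqref{eq:central_ocp}. Since for $\initstate \in \Gamma$ this OCP admits a \emph{unique} minimizer $(\vm x\inds(\cdot;\initstate),\vm u\inds(\cdot;\initstate))$, the stacked local optima $[\vm u_i\inds(\cdot;\initstate)]_{\agents}$ reconstruct exactly $\vm u\inds(\cdot;\initstate)$, so that the closed-loop system \eqref{eq:centr_dyn} under \eqref{eq:dist_opt_controllaw} produces the identical trajectory as under the centralized law \eqref{eq:MPC_control_law}. I would state this as a short lemma relying only on uniqueness (already assumed on $\Gamma$) and on the fact that the primal decomposition underlying the sensitivity-based algorithm is exact, i.e.\ a zero-residual fixed point satisfies the central optimality conditions \eqref{eq:opt_statedyn}--\eqref{eq:Pontraygin}.

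\textbf{Step 2: Assumption \ref{ass:dist_CLF} implies Assumption \ref{ass:CLF}.} A structured feedback $\vm u = [\vm r_i(\vm x_i,\vm x_{\mathcal N_i})]_{\agents}$ is in particular a feedback law $\vm r:\mathds{R}^n\to\mathbb{U}$, where membership in $\mathbb{U}$ follows componentwise since $\mathbb{U}=\prod_{\agents}\mathbb{U}_i$; the separable $V(\vm x)=\sum_{\agents}V_i(\vm x_i)$ is a valid terminal cost still satisfying the quadratic bounds in \eqref{eq:bound_costs}; and the CLF inequality \eqref{eq:CLF_inequality} holds on $\Omega_\beta=\{\vm x\in\Gamma:V(\vm x)\le\beta\}$ by hypothesis. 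Hence all premises of Theorem \ref{th:centralMPC} are met, and $\Gamma_\alpha$ in \eqref{eq:reg_of_attr} is well defined with the same $\alpha$.

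\textbf{Step 3: invoke Theorem \ref{th:centralMPC}.} For $\vm{\munderbar x}_0\in\Gamma_\alpha$, Theorem \ref{th:centralMPC} yields $\initstate\in\Gamma_\alpha$ for all MPC steps, $\vm x\inds(T;\initstate)\in\Omega_\beta\subset\Gamma_\alpha$, the cost decrease \eqref{eq:th1}, and asymptotic stability of the origin together with asymptotic decay of the optimal cost \eqref{eq:opt_cost}; by Step~1 these conclusions transfer verbatim to the closed loop under \eqref{eq:dist_opt_controllaw}, which proves the corollary. The only (mild) obstacle is Step~1 — formally pinning down that ``cooperative DMPC with exact optimization'' returns the centralized minimizer — while Steps~2 and~3 are essentially a containment check and a direct citation.
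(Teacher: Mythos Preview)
Your proposal is correct and takes essentially the same approach as the paper, which simply states that the proof is analogous to that of Theorem~\ref{th:centralMPC} because Assumption~\ref{ass:dist_CLF} is a special case of Assumption~\ref{ass:CLF}. Your Step~1 makes explicit what the paper leaves implicit (that the stacked local optima in \eqref{eq:dist_opt_controllaw} coincide with the unique centralized minimizer), but otherwise the reduction is identical.
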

The proof of Corollary \ref{th:dist_opt_MPC} is analogous to the proof of Theorem \ref{th:centralMPC} as Assumption \ref{ass:dist_CLF} represents a special case of Assumption~\ref{ass:CLF}.  
It shows that the classical MPC result of Theorem \ref{th:centralMPC} extends to the distributed setting if it is additionally assumed that the feedback control $\vm r(\vm x)$ and terminal cost $ V(\vm x)$ are distributed as well. Note that this  result is independent of the underlying distributed optimization algorithm. The case in which only a suboptimal solution with a limited amount of iterations is found by the considered sensitivity-based distributed optimal control algorithm will be investigated in Section \ref{sec:DMPC_stab}. 
In the following, a simple optimization-based approach is proposed to obtain a structured terminal cost $ V(\vm x)$ and controller $ \vm u =\vm r(\vm x)$ offline such that Assumption \ref{ass:dist_CLF} is satisfied. For the remainder of this section, we will consider the specific quadratic terminal and integral cost functions 
\begin{equation} \label{eq:quadratic_costs}
	V(\vm x) = \sum_{\agents}\vm x_i\trans \vm P_i \vm x_i\,, \quad 	l(\vm x, \vm u) = \sum_{\agents} \vm x_i\trans \vm Q_i \vm x_i + \vm u_i\trans \vm R_i \vm u_i
\end{equation}
with weighting matrices $\vm P_i,\,\vm Q_i\succ0$ and $ \vm R_i\succ0$, $\agents$.
The main difficulty and difference to classical MPC are that the quadratic terminal cost $V(\vm x) = \vm x\trans \vm P\vm x$ and linear feedback law  $ \vm r(\vm x) = \vm K \vm x$ cannot be obtained by solving a Lyapunov or Riccati equation.\cite{Conte,Costantini} This is due to the fact that in general the solution matrix $\vm P$ and feedback controller $\vm K$ will not exhibit the separable structure as required by Assumption \ref{ass:dist_CLF}.
Furthermore, consider the linearized system of \eqref{eq:centr_dyn} at the origin which can be derived as 
\begin{equation} \label{eq:lin_system}
	\vm{\dot x} = \vm A \vm x + \vm B \vm u\,, \quad  \vm x(0) = \vm {\munderbar x}_0\,,
\end{equation}
where $ \vm A = \partial_{\vm x}\, \vm f(\vm 0,\vm 0)$ and $ \vm B:= \partial_{\vm u}\, \vm f(\vm 0,\vm 0)$. Note that similar to \eqref{eq:agent_dyn}, the linearized system exhibits a sparse structure with $\partial_{\vm x_j}\, \vm f_i(\vm 0, \vm 0, \vm 0)=:\vm A_{ij} \neq \vm 0$ only if $\send$ and $\vm B = \mathrm{diag}(\vm B_1, \vm B_2,..., \vm B_N )$ with $\vm B_i = \partialu\,\vm f_i(\vm 0,\vm 0,\vm 0)$. By Assumption \ref{ass:CLF} there exists a stabilizing controller $\vm u = \vm K \vm x$ such that the controlled linear system $\vm A + \vm B \vm K$ is stable. It is well known from MPC literature that if the inequality 
\begin{equation}
	(\vm A + \vm B \vm K)\trans \vm P + \vm P (\vm A+ \vm B\vm K) + \gamma (\vm Q + \vm K\trans \vm R \vm K) \leq 0 \label{eq:linear_CLF}
\end{equation} 
with $\gamma \in (1,\infty)$  is satisfied, then there exists a non-empty set $\Omega_{\beta} = \{\vm x \in \Gamma: \vm x\trans \vm P \vm x \leq \beta \}$ for some $\beta>0$ such that $\Omega_{\beta}$ is control invariant with the control law $\vm{\munderbar{u}} = \vm K \vm{\munderbar{x}}$, i.e. any initial state $\vm {\munderbar x}_0 \in \Omega_{\beta}$ implies that $ \vm {\munderbar x}(t) \in \Omega_{\beta}$ and $\vm{\munderbar{u}} (t) \in \mathbb{U}$ for all $t$ and that for any $ \vm {\munderbar x}(t) \in \Omega_{\beta}$, the CLF inequality $\frac{\dd }{\dd t} (\vm x\trans \vm P \vm x) +  \vm x\trans(\vm {\vm Q + \vm K\trans \vm R \vm K})\vm x \leq 0$ holds \cite{Chen,Mayne}. The requirements of Assumption \ref{ass:dist_CLF} and condition \eqref{eq:linear_CLF} can be reformulated as the following convex semi-definite optimization problem in $\vm P$ and $\vm K$
\begin{subequations}\label{eq:min_distCost}
	\begin{align}
		\min_{\vm E \succeq \vm 0,\, \vm Y}\quad& -\log (\det (\vm E)) \label{eq:distCost_costfunc} \\
		\stt \quad& \begin{bmatrix}
			\vm A \vm E + \vm E \vm A\trans + \vm B \vm Y + \vm Y\trans \vm B\trans & \vm E \vm Q^{\frac{1}{2}} & \vm Y\trans \vm R^{\frac{1}{2}} \\
			\vm Q^{\frac{1}{2}} \vm E & -\frac{1}{\gamma} \vm I & \vm 0 \\
			\vm R^{\frac{1}{2}} \vm Y & \vm 0 & - \frac{1}{\gamma}  \vm I
		\end{bmatrix}  \preceq \vm 0    \label{eq:distCost_CLF}\\
		\quad& \vm E = \mathrm{diag}\{ \vm E_i \, : \, \agents\} \label{eq:distCost_structure}\\
		\quad& \vm Y_{ij} = \vm 0\,, \quad \quad i \in \mathcal{V},\, j \notin \mathcal{N}_i\,, \label{eq:distcontroller_structure}
	\end{align}
\end{subequations}
with the substitutions $ \vm E:= \vm P^{-1}$ and $\vm Y:= \vm K \vm E$. The equivalence of conditions \eqref{eq:linear_CLF} and \eqref{eq:distCost_CLF} can be shown via Schur compliment techniques\cite{Boyd3}. For the considered quadratic cost functions, the constraint \eqref{eq:distCost_structure} ensures that the terminal cost $ V(\vm x)$ has the required separable structure, while the constraint \eqref{eq:distcontroller_structure} guarantees that the controller fulfills Assumption \ref{ass:dist_CLF}, i.e. $\vm r_i(\vm x_i, \vm x_{\mathcal{N}_i}) = \vm K_{ii} \vm x_i + \sum_{\neighs} \vm K_{ij} \vm x_j$. The cost objective requires maximizing $\log(\det \vm P^{-1})$ which is favorable in regard of enlarging the size of the terminal set as it maximizes the volume of the 1-level set ellipsoid $\Omega_{1} = \{\vm x \in \Gamma: \vm x\trans \vm P \vm x \leq 1 \}$ \cite{Conte}. Since problem \eqref{eq:min_distCost} is convex, global minimizerers $\vm P$ and $\vm K$ with the required structure exists as long as \eqref{eq:distCost_CLF} as well as $\vm E\succeq \vm 0$ are feasible. In this way, the optimization procedure \eqref{eq:min_distCost} provides a constructive test for the existence of a separable terminal cost acting as a CLF and a structured terminal controller as both required by Assumption~\ref{ass:dist_CLF}. Conservatism, however, is introduced by the choice of a block diagonal terminal cost and the structured linear controller when compared to the classical MPC approach without structural constraints. The size of the terminal region, i.e. $\beta$, can for example be determined similar to the procedure in \cite{Chen}.
%

\section{Distributed solution via sensitivities}
\label{sec:dist_sol}
The main idea of iterative and cooperative DMPC schemes is to solve~\eqref{eq:central_ocp} in a distributed fashion in each MPC step $k$. To this end, a sensitivity-based approach is used in which the local cost function \eqref{eq:agent_cost} of each agent is extended by so-called sensitivities of its neighbors such that the agents cooperatively solve the central OCP \eqref{eq:central_ocp}.

\subsection{Extended local optimal control problem}
The idea of utilizing first-order sensitivities to account for the influence of neighbors has been utilized before in the DMPC setting \cite{Huber,Huber2,Scheu,Scheu2,Pierer} and is derived for the problem at hand.
By extending the individual cost functions \eqref{eq:def_central_cost} of each agent $\agents$ with the first-order sensitivities of its neighbors $\neighs$, the agents not only consider their local cost objective but also a first-order approximation of their neighbors' costs. The sensitivity represents the information about the expected change in the overall cost objective \eqref{eq:central_costFunction} that the states of an agent $\agents$ induce via the local cost of agent $\rec$. The local problem of agent $\agents$ to be solved at step $q$ of the sensitivity-based algorithm therefore reads
\begin{subequations}\label{eq:local_ocp}
	\begin{align}
		\min_{ \vm u_i} \quad&
		\bar J_i(\vm u_i;\initstate):= V_i(\vm x_i(T)) + \int_0^T l_i(\vm x_i, \vm u_i, \Ni{x}\indqp) \,\dd \tau + \sum_{\rec} \delta \bar J_j(\vm u_j\indqp;\initstate )(\delta \vm x_i)
		\label{eq:local_costFunction}	\\
		~\stt \quad&	\vm{\dot{x}}_i = \vm f_i(\vm x_i, \vm u_i, \Ni{x}\indqp)\,, \quad\vm x(0) =\vm {\munderbar x}_{i,k}	\label{eq:local_ocp_dyn}\\
		&\vm u_i(\tau) \in \mathbb{U}_i\,, \quad \timetau
	\end{align}
\end{subequations}
with the neighbor's cost sensitivity represented by the corresponding Gâteaux derivative $  \delta \bar J_j(\vm u_j\indqp;\initstate )(\delta \vm x_i)$, $\rec$ in direction of $ \delta \vm x_i := \vm x_i - \vm x_i\indqp$ evaluated for the trajectories of the previous iteration $q-1$. The dynamics \eqref{eq:agent_dyn} and costs \eqref{eq:agent_cost} are decoupled by using the state trajectories $ \Ni{x}\indqp$ of the previous iteration which need to be communicated by each neighbor $\send$ to the agent $\agents$. The Gâteaux derivative $\delta \bar{J}_j $ in \eqref{eq:local_costFunction} can be expressed as (see Appendix A for an explicit derivation)
\begin{align} \label{eq:recursive_Gateaux}
	&\delta \bar J_j(\vm u_j\indqp;\initstate )(\delta \vm x_i)= \int_0^T \big(\underbrace{\partialx l_{ji}(\vm x_j\indqp, \vm x_i\indqp) + \partialx\,\vm f_{ji}(\vm x_j\indqp, \vm x_i\indqp)\trans \vm \lambda_j\indqp}_{ \vm g_{ji}\indqp(\tau)}\big)\trans\,\delta \vm x_i \, \dd \tau\,, \quad \rec
\end{align}
where $\vm \lambda_j \in \mathds{R}^{n_j}$ denotes the adjoint state of the neighbors $\rec$. Defining the local Hamiltonian for the local OCP \eqref{eq:local_ocp}
\begin{equation}\label{eq:local_Hamiltonian}
	H_i(\vm x_i, \vm u_i, \vm \lambda_i):= l_i(\vm x_i, \vm u_i,
	\Ni{x}\indqp )+\vm \lambda_i\trans \vm f_i(\vm x_i, \vm u_i,\Ni{x}\indqp)+ \sum_{\rec}  (\,\vm g_{ji}\indqp)\trans (\vm x_i - \vm x_i\indqp) \,,
\end{equation}
the adjoint state can be calculated via the backward integration of
\begin{equation} \label{eq:adjoint_dyn}
	\vm {\dot \lambda}_i(\tau) = - \partialx H_i(\vm x_i, \vm u_i, \vm \lambda_i)\,, 
	\quad  \vm \lambda_i(T) = \partialx V_i(\vm x_i(T; \initstate))\,, \quad \timetau \,.
\end{equation}
and needs to be communicated by each agent $\agents$ to its neighbors $\send$ such that  \eqref{eq:recursive_Gateaux} can be evaluated. 
Note that for sending neighbors $\send$ the sensitivity \eqref{eq:recursive_Gateaux} vanishes since $l_{ji} = 0$ and $\vm f_{ji} = \vm 0$. The bracketed term in \eqref{eq:recursive_Gateaux} can be interpreted as the (time-dependent) gradient $ \vm g_{ji}(\tau) \in \mathds{R}^{n_i}, \timetau$ of a neighbor's $\rec$ cost functional w.r.t. the (time-dependent) external trajectories $\vm x_i(\tau)$  of agent $\agents$. The gradient $\vm g_{ji}\indqp$ can be computed locally by each agent for its neighbors $\rec$ due to the neighbor-affine structure of \eqref{eq:agent_dyn} and \eqref{eq:agent_cost} as long as the agent has access to the coupling functions  $l_{ji}(\vm x_j, \vm x_i)$ and $\vm f_{ji}(\vm x_j, \vm x_i)$, $\rec$. 
\subsection{Sensitivity-based algorithm}
The separable structure of OCP~\eqref{eq:local_ocp} is exploited by solving the individual problems in parallel on the agent level. The procedure is summarized in Algorithm \ref{alg:SENSI_org} which shows the distributed sensitivity-based solution of the original central OCP \eqref{eq:central_ocp}. The algorithm is executed in each MPC step with the current system state $ \initstate = [ \vm{\munderbar x}_{i,k}]_{\agents} = \vm{\munderbar x}(t_k)$ in parallel and locally by each agent $\agents$. Algorithm \ref{alg:SENSI_org} consists of two local computations of which the first calculates the gradient $\vm g_{ji}$ for all neighbors $\rec$ and the second requires the solution of the local OCP~\eqref{eq:local_ocp}. In addition, one communication step is needed in which the state and adjoint trajectories are exchanged between neighboring agents. Thus, Algorithm \ref{alg:SENSI_org} constitutes a fully distributed algorithm with only local computation steps and neighbor-to-neighbor communication. For an efficient solution of OCP \eqref{eq:alg_OCP} the fixed-point iteration scheme\cite{Pierer3} or the projected gradient method\cite{Graichen3,Englert} can be used which also conveniently calculate the adjoint state $ \vm \lambda_i$. Alternatively, a standard OCP solver can be used to compute $(\vm u_i^k,\vm x_i^k)$ in \eqref{eq:alg_OCP}, but then the adjoint dynamics~\eqref{eq:adjoint_dyn} must be solved backward in time to obtain $\vm\lambda_i^k$.
The stopping criterion in Step 4 terminates the algorithm either if a maximum number of iterations $q_{\mathrm{max}}$ has been reached or if a convergence criterion is satisfied at a certain iteration number $q_k$ in the current MPC step. The criterion \eqref{eq:stopp_crit} evaluates the progress of the state and adjoint state trajectories ($\vm x_i^q(\tau), \vm \lambda_i^q(\tau)$) between two algorithm iterations w.r.t. the norm of the current system state $\initstate$ weighted by the constant $d>0$. This leads to a contraction of the stopping criterion during the stabilization of the system to the origin. Although the stopping criterion is only evaluated locally, it needs to be satisfied by all agents simultaneously in each MPC step which requires either a central node or a global communication step. In a practical implementation, this convergence monitoring is usually done by a central node which is also responsible for triggering the individual algorithm steps in a synchronized manner.\cite{Burk} 
This central convergence check can be avoided if instead a maximum number of iterations $q_{\mathrm{max}}$ is performed without evaluating the stopping criterion. In this way, the convergence criterion \eqref{eq:stopp_crit} can be used if a certain accuracy is required, while stopping after a maximum number of iterations is beneficial for real-time applications with a fixed-sampling time $ \Delta t$. In the next section, conditions on the stopping criterion constant $d$ and the maximum number of iterations $q_{max}$ are derived such that the resulting DMPC controller stabilizes the system exponentially. 
In the first MPC step $k=0$, Algorithm \ref{alg:SENSI_org} is locally initialized with appropriate trajectories, e.g. $ \vm x_i^0( \tau; \vm x_0) = \vm{\munderbar x}_{i,0}$ and $ \vm \lambda_i^0( \tau, \vm x_0) = \partialx V_i(\vm{\munderbar x}_{i,0})$. In the following MPC steps $k \geq 1$, the algorithm is warm-started by the solution trajectories of the last iteration $ q_k$ in the previous MPC step, i.e.
\begin{align} \label{eq:warm_start}
	\vm x_i^0 ( \tau;  \vm{\munderbar x}_{k+1}) = \vm x_i\indqk ( \tau;  \vm{\munderbar x}_{k})\,, \quad \vm \lambda_i^0 ( \tau;  \vm{\munderbar x}_{k+1}) = \vm \lambda_i\indqk ( \tau; \vm{\munderbar x}_{k})\,, \quad \timetau \,, \quad \agents
\end{align}
for all agents $\agents$. As soon as the stopping criterion \eqref{eq:stopp_crit} is fulfilled for all agents $\agents$ or the maximum number of iterations $q_{\mathrm{max}}$ are reached, the calculated control trajectory $ \vm u_i\indqk(\tau; \initstate)$ from the last iteration $q_k$ is taken as the input for the actual subsystems \eqref{eq:agent_dyn}, i.e.
\begin{equation}
	\vm{\munderbar u}_i(t_k + \tau ) = \vm u_i\indqk(\tau; \initstate)\,, \quad \timetausample\,, \quad \agents.
\end{equation}
Similar to \eqref{eq:MPC_control_law}, the current controls $ \vm u\indqk(\tau; \initstate) = [ \vm u_i]_{\agents}$ can interpreted as the nonlinear sampled control law
\begin{equation} \label{eq:DMPC_control_law}
	\vm u\indqk(\tau; \initstate) =: \vm \kappa( \vm x\indqk(\tau; \initstate); \vm x\indqkp(\tau; \initstate), \vm \lambda\indqkp(\tau; \initstate), \initstate )
\end{equation}
with $ \timetausample$. Note, however, in contrast to the centralized MPC case \eqref{eq:MPC_control_law}, the control law \eqref{eq:DMPC_control_law} is parameterized by the state and adjoint trajectories $ \vm x\indqkp(\tau; \initstate)$ and  $ \vm \lambda\indqkp(\tau; \initstate)$ of the previous iteration of Algorithm \ref{alg:SENSI_org}.
\begin{algorithm}[tb]
	\caption{Distributed sensitivity-based solution of OCP \eqref{eq:central_ocp}}
	\begin{algorithmic}[1]
		\Statex Initialize $(\vm x_i^{0}$, $\vm \lambda_i^{0})$, set $q=1$, choose $d>0$ or $q_{\mathrm{max}}$, send $\vm x_i^{0}$ to $j \in \mathcal{N}_i$ and $ \vm \lambda_i^{0}$ to $j \in \mathcal{N}_i^\leftarrow$ 
		\State Compute the gradients $\vm g_{ji}\indqp$ for all $\rec$ as
		\begin{align} 
			\vm g_{ji}\indqp(\tau)=\partialx l_{ji}(\vm x_j\indqp, \vm x_i\indqp) + \partialx\,\vm f_{ji}(\vm x_j\indqp, \vm x_i\indqp)\trans \vm \lambda_j\indqp\,, \quad \timetau
		\end{align}
		\State Compute $(\vm u_i^{q}, \vm x_i^{q}, \vm\lambda_i^{q})$ by solving 
		\begin{subequations}\label{eq:alg_OCP}
			\begin{align}
				\min_{ \vm u_{i}} \quad & V_i(\vm x_i(T)) + \int_0^T l_i(\vm x_i, \vm u_i, \Ni{x}\indqp)+\!\! \sum_{\rec} (\vm g_{ji}\indqp)\trans(\vm x_i - \vm x_i\indqp) \,\dd\tau 
				\\  ~\stt \quad&\vm{\dot x}_i = \vm f_i(\vm x_i, \vm u_i,\Ni{x}\indqp)\,, 
				\quad \vm x_i(0) =\vm{\munderbar x}_{i,k}\\
				&\vm u_i(\tau) \in \mathbb{U}_i\,, \quad \timetau
			\end{align}
		\end{subequations}
		\State Send state trajectories $\vm x_i^{q}$ to $\neighs$ and adjoint trajectories $\vm \lambda_i\indq$ to $\send$ 
		\State \textbf{if} $q = q_{\mathrm{max}}$ or \begin{equation}\label{eq:stopp_crit}
			\bigg\|\begin{matrix}
				\vm x_i^q - \vm x_i^{q-1} \\
				\vm \lambda_i^q - \vm \lambda_i^{q-1} \\
			\end{matrix}\bigg\|_{L_\infty}\leq d \| \munderbar{\vm x}_{i,k} \|\,,\quad \forall \agents \end{equation}
		\State\textbf{then} Quit with $q_k := q$
		\State \textbf{else} Increment $q \leftarrow q + 1$ and go to Step 1
	\end{algorithmic}\label{alg:SENSI_org}
\end{algorithm}
%
%

\section{DMPC stability analysis under inexact optimization}
\label{sec:DMPC_stab}
By construction, Algorithm \ref{alg:SENSI_org} iterates until a suboptimal input $\vm u_i\indqk(\tau; \initstate)$, $\timetausample$ is found, either when the stopping criterion \eqref{eq:stopp_crit} is fulfilled or a maximum number of iterations $q_{\mathrm{max}}$ is reached. While Algorithm \ref{alg:SENSI_org} is consequently suitable for a real-time capable implementation, stability of the DMPC scheme is not guaranteed since Corollary \ref{th:dist_opt_MPC} does not apply anymore. That is why stability results for the DMPC scheme under an inexact minimization of the central OCP \eqref{eq:central_ocp} by Algorithm \ref{alg:SENSI_org} are analyzed in this section. Two scenarios are considered: First an upper bound on the constant $d$ in \eqref{eq:stopp_crit} is derived in Section \ref{subsec:stab_analys} and then the bound is reformulated in Section \ref{subsec:fixed_iter} in terms of the required number of iterations $q_{\mathrm{max}}$ that must be executed in each MPC step. The proof follows along the lines of the work\cite{Bestler}, where the (D)MPC stability of a similar scheme, based on ADMM, was investigated. However, we require less assumptions and in particular do not need to assume convergence and boundedness of the iterates of the employed distributed optimization algorithm but rather are able to show linear convergence of Algorithm \ref{alg:SENSI_org} which turns out to be critical for the subsequent stability analysis of the DMPC scheme. In contrast the trade-off is that convergence of Algorithm \ref{alg:SENSI_org} can only be guaranteed for some maximum horizon length which is investigated in more detail in Lemma \ref{lem:lin_conv} at the end of Section \ref{subsec:prel}. 
\subsection{Preliminaries for the suboptimal case}
\label{subsec:prel}
Compared to the optimal (D)MPC feedback laws \eqref{eq:MPC_control_law} and \eqref{eq:dist_opt_controllaw}, the control \eqref{eq:DMPC_control_law} represents a suboptimal feedback law. This is caused by prematurely stopping Algorithm \ref{alg:SENSI_org} after a certain number of iterations. As a consequence, the solution trajectories of the sensitivity-based DMPC algorithm are not consistent with the optimal MPC solution and could lead to a possibly destabilizing solution as Theorem \ref{th:centralMPC} or Corollary \ref{th:dist_opt_MPC} do not hold anymore. 

While stopping Algorithm \ref{alg:SENSI_org} prematurely limits the number of required iterations and thus the time-expensive communication steps, it also leads to suboptimal control trajectories $ \vm u\indqk(\tau; \vm {\munderbar x}_k)$ that differ from the optimal ones $ \vm u\inds(\tau; \vm {\munderbar x}_k)$. That is why in general, the state trajectories of the local MPC predictions as part of the solution of the local OCP \eqref{eq:alg_OCP}, the actual realizations from applying the suboptimal control \eqref{eq:DMPC_control_law} of each agent to the actual system \eqref{eq:centr_dyn}, and the optimal ones resulting from the centralized MPC solution \eqref{eq:central_ocp} will not be identical. In particular, we have to distinguish between the following trajectories:
\begin{itemize}
	\item The individual predicted state trajectories $\vm x\indqk(\cdot; \initstate) = [\vm x_i\indqk (\cdot; \initstate)]_{\agents}$ resulting from the solution of OCP \eqref{eq:alg_OCP} in the last iteration $q_k$, i.e.
	\begin{equation}\label{eq:pred_traj}
		\vm{ \dot x}\indqk(\tau; \initstate) = \vm{ \hat f}(\vm x\indqk(\tau; \initstate), \vm u\indqk(\tau; \initstate),\vm{\hat x}\indqkp(\tau; \initstate))\,, \quad \vm x\indqk(0; \initstate)  = \initstate
	\end{equation}
	with the stacked notation $\vm {\hat x}\indqkp(\tau; \initstate) = [\Ni{x}\indqkp]_{\agents} \in \mathds{R}^{p}$ and $\vm{ \hat f} = [ \vm f_i ]_{\agents}$, where the notation $\vm{ \hat f}$ explicitly captures the dependency on $ \vm {\hat x}\indqkp(\tau; \initstate)$.
	\item The actual state trajectories $\vm x_c\indqk(\cdot; \initstate)$ resulting from applying the suboptimal controls $\vm u\indqk(\cdot; \initstate) = [\vm u_i\indqk(\cdot; \initstate)]_{\agents}$ to the central system \eqref{eq:centr_dyn}, i.e.
	\begin{equation}\label{eq:act_traj}
		\vm {\dot x}_c\indqk(\tau; \initstate) = \vm f(\vm x_c\indqk(\tau; \initstate), \vm u\indqk(\tau; \initstate))\,, \quad  \vm x_c\indqk(0; \initstate) = \initstate\,.
	\end{equation}
	\item The optimal trajectories $\vm x\inds(\cdot; \initstate) = [\vm x_i\inds (\cdot; \initstate)]_{\agents}$ following from solving the central OCP \eqref{eq:central_ocp} cf. \eqref{eq:opt_statedyn}, i.e.
	\begin{equation}\label{eq:opt_traj}
		\vm{\dot x}\inds(\tau; \initstate) = \vm f(\vm x\inds(\tau; \initstate), \vm u\inds(\tau; \initstate))\,, \quad  \vm x\inds(0; \initstate) = \initstate\,.
	\end{equation}
\end{itemize}
In the nominal case, this implies that the system state of~\eqref{eq:centr_dyn} in the next MPC step, i.e. $\vm{\munderbar x}_{k+1} =  \vm{\munderbar x}(t_{k+1})$, will lie on the actual state trajectory 
\begin{equation} \label{eq:next_sampling_step}
	\vm{\munderbar x}_{k+1} = \vm x_c\indqk(\Delta t;\initstate)
\end{equation} 
and not on the individual state trajectory $\vm x\indqk(\Delta t; \initstate)$ or the optimal one $\vm x\inds(\Delta t; \initstate)$.
This difference between optimal and actual state trajectory is expressed by the error between actual state trajectory and optimal state trajectory 
\begin{equation} \label{eq:error}
	\Delta \vm x_c\indqk(\tau; \initstate) : = \vm x_c\indqk(\tau; \initstate) - \vm x\inds(\tau; \initstate)\,, \quad \timetau
\end{equation}
that can be interpreted as a suboptimality measure in each MPC step $k$. Several assumptions and intermediate results concerning the optimal solution of the central MPC scheme and the properties of Algorithm \ref{alg:SENSI_org} are necessary to proceed.
\begin{assumption}\label{ass:cost_Lipschitz}
	The optimal cost \eqref{eq:opt_cost} is twice continuously differentiable and the feedback laws \eqref{eq:MPC_control_law} and \eqref{eq:DMPC_control_law} are locally Lipschitz w.r.t. their arguments. 
\end{assumption}
These continuity assumptions are typical in (D)MPC \cite{Graichen,Graichen2} and are required to derive certain bounds in the stability analysis. A consequence of Assumption \ref{ass:cost_Lipschitz} is that there exist constants $m_J$, $M_J>0$ such that the optimal cost satisfies (cf. \eqref{eq:upper_bound_opt_cost} and \eqref{eq:lower_bound_opt_cost} in Appendix E)
\begin{equation}\label{eq:cost_bound}
	m_J \| \initstate\|^2 \leq J\inds(\initstate) \leq M_J \| \initstate\|^2\,, \quad \forall \initstate \in \Gamma_\alpha\,.
\end{equation}
In order to proceed with the stability analysis, the rate of convergence of Algorithm \ref{alg:SENSI_org} needs to be characterized. The next lemma shows linear convergence of the sensitivity-based DMPC Algorithm. 
\begin{lemma}\label{lem:lin_conv}
	Suppose that Assumption  \ref{ass:cost_Lipschitz} holds. Then, there exists an upper bound on the horizon length $T_{\max}>0$ such that for $T<T_{\max}$, Algorithm \ref{alg:SENSI_org} converges linearly, i.e.
	\begin{align}\label{eq:lin_conv}
		\bigg\|\begin{matrix}
			\vm x\inds(\cdot; \initstate) - \vm x^{q}(\cdot; \initstate) \\
			\vm \lambda\inds(\cdot; \initstate) - \vm \lambda^{q}(\cdot; \initstate) \\
		\end{matrix}\bigg\|_{L_\infty} \leq p \bigg\|\begin{matrix}
			\vm x\inds(\cdot; \initstate) - \vm x^{q-1}(\cdot; \initstate) \\
			\vm \lambda\inds(\cdot; \initstate) - \vm \lambda^{q-1}(\cdot; \initstate) \\
		\end{matrix}\bigg\|_{L_\infty}
	\end{align}
	for some $p \in(0,1)$.
\end{lemma}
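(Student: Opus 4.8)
The plan is to set up the iteration of Algorithm~\ref{alg:SENSI_org} as a fixed-point map on the space of state--adjoint trajectory pairs and to show that, for sufficiently small $T$, this map is a contraction in the $L_\infty$-norm with the optimal solution $(\vm x\inds, \vm \lambda\inds)$ as its (unique) fixed point. The starting observation is that OCP~\eqref{eq:alg_OCP} together with its adjoint dynamics~\eqref{eq:adjoint_dyn} and the Hamiltonian minimization~\eqref{eq:Pontraygin} characterizes $(\vm x_i^q, \vm \lambda_i^q, \vm u_i^q)$ implicitly in terms of the previous-iteration neighbor data $\Ni{x}\indqp$ and the gradients $\vm g_{ji}\indqp$, which in turn depend only on $(\vm x^{q-1}, \vm \lambda^{q-1})$. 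Stacking over all agents, one iteration of the algorithm is a map $\Phi_T : (\vm x^{q-1}, \vm \lambda^{q-1}) \mapsto (\vm x^{q}, \vm \lambda^{q})$, and since the decoupled problem at iteration $q$ reduces to the central one precisely when $\Ni{x}\indqp = \Ni{x}\inds$ and the sensitivity terms vanish against $\delta \vm x_i = \vm 0$, the optimal pair is a fixed point of $\Phi_T$.

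**Key steps.**

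First I would write the state equation~\eqref{eq:local_ocp_dyn}, the adjoint equation~\eqref{eq:adjoint_dyn}, and the stationarity/Pontryagin condition in integral form over $[0,T]$, so that each of $\vm x_i^q(\tau)$, $\vm \lambda_i^q(\tau)$, $\vm u_i^q(\tau)$ is expressed as an integral (from $0$, respectively from $T$, respectively pointwise) of right-hand sides that are, by the twice-continuous-differentiability hypotheses on $\vm f$ and $l$ and by Assumption~\ref{ass:cost_Lipschitz}, locally Lipschitz in all arguments on the compact tube around the optimal trajectory. Second, I would use these integral representations to bound $\|\vm x^q - \vm x\inds\|_{L_\infty}$, $\|\vm \lambda^q - \vm \lambda\inds\|_{L_\infty}$ and $\|\vm u^q - \vm u\inds\|_{L_\infty}$ each by $C\,T$ times the previous-iteration errors $\|\vm x^{q-1} - \vm x\inds\|_{L_\infty}$ and $\|\vm \lambda^{q-1} - \vm \lambda\inds\|_{L_\infty}$, with a constant $C$ depending only on the Lipschitz constants and on bounds for $\vm \lambda$ on the tube; the control error is eliminated by substituting its $CT$-bound back into the state and adjoint estimates. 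Combining, one obtains
\begin{equation*}
	\bigg\|\begin{matrix}
		\vm x\inds - \vm x^{q} \\
		\vm \lambda\inds - \vm \lambda^{q}
	\end{matrix}\bigg\|_{L_\infty} \leq p(T) \bigg\|\begin{matrix}
		\vm x\inds - \vm x^{q-1} \\
		\vm \lambda\inds - \vm \lambda^{q-1}
	\end{matrix}\bigg\|_{L_\infty}
\end{equation*}
with $p(T) = \tilde C\, T$ (or $p(T)\to 0$ as $T\to 0$ more generally). Third, choosing $T_{\max}$ so that $p(T)<1$ for all $T<T_{\max}$ yields the claimed linear convergence with rate $p := p(T) \in (0,1)$; I would also note in passing that the contraction keeps the iterates inside the tube around $(\vm x\inds, \vm \lambda\inds)$, so the local Lipschitz bounds used are legitimate throughout.

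**Main obstacle.**

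The delicate point is the treatment of the Hamiltonian minimization~\eqref{eq:Pontraygin} with the input constraint $\vm u_i \in \mathbb{U}_i$: the minimizer $\vm u_i^q(\tau)$ is only implicitly defined and the map $(\vm x_i, \vm \lambda_i) \mapsto \arg\min_{\vm u_i \in \mathbb{U}_i} H_i$ need not be differentiable at the constraint boundary. I would handle this by invoking Assumption~\ref{ass:cost_Lipschitz}, which posits that the feedback law~\eqref{eq:DMPC_control_law} (hence the argmin map) is locally Lipschitz in its arguments; combined with the fact that the Hamiltonian-minimizing input depends on the decoupled data only through $(\vm x_i^{q-1}, \vm \lambda_i^{q-1}, \initstate)$ via $\vm g_{ji}\indqp$, this gives the needed $\|\vm u^q - \vm u\inds\|_{L_\infty} \le C T (\cdots)$ estimate without requiring smoothness of the projection. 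The secondary technical nuisance is bookkeeping the sensitivity/gradient term: one must verify that $\vm g_{ji}\indqp - \vm g_{ji}\inds$ is itself $L_\infty$-bounded by a constant times the iteration errors (immediate from~\eqref{eq:recursive_Gateaux}, the smoothness of $l_{ji}, \vm f_{ji}$, and boundedness of $\vm \lambda_j$ on the tube), and that this term contributes only at order $T$ to the adjoint estimate because it is integrated against $\delta\vm x_i$ inside~\eqref{eq:adjoint_dyn}. Everything else is a standard Grönwall/short-horizon argument.
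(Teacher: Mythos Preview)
Your proposal is correct and follows essentially the same route as the paper: write the state and (reverse-time) adjoint equations in integral form, invoke the Lipschitz property of the feedback law from Assumption~\ref{ass:cost_Lipschitz} to control $\|\vm u^q-\vm u\inds\|$, apply Gronwall, and observe that the resulting contraction factor tends to zero as $T\to 0$. The one place where the paper is more careful is boundedness of the iterates: rather than arguing \emph{a posteriori} that the contraction keeps $(\vm x^q,\vm\lambda^q)$ in a tube around the optimum---which is mildly circular, since the local Lipschitz constants are only valid on that tube and the cold-start initialization $\vm x_i^0\equiv\vm{\munderbar x}_{i,0}$, $\vm\lambda_i^0\equiv\partial_{\vm x_i}V_i(\vm{\munderbar x}_{i,0})$ is not a priori close to $(\vm x\inds,\vm\lambda\inds)$---the paper first proves by a separate induction-on-$q$ Gronwall argument that the iterates remain in fixed compact sets $\Gamma_\alpha^{r_x}$ and $\mathcal S^{r_\lambda}$ for $T$ below thresholds $T_x,T_\lambda$, and only then runs the contraction estimate; the final $T_{\max}$ is the minimum of these and the contraction threshold $T_p$. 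A minor imprecision: after differentiating $H_i$ in~\eqref{eq:adjoint_dyn} the factor $\delta\vm x_i$ disappears and $\vm g_{ji}\indqp$ enters the adjoint right-hand side directly, so the order-$T$ contribution of the sensitivity term comes simply from integrating over $[0,T]$, not from being ``integrated against $\delta\vm x_i$''.
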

\begin{proof}
	See Appendix B. 
\end{proof} 
Lemma \ref{lem:lin_conv} reveals the fact that convergence can be guaranteed with a sufficiently small prediction horizon $T$. This shows that there exists a trade-off between the size of the domain of attraction $\Gamma_{\alpha}$ which can be enlarged by increasing $T$, cf. Theorem \ref{th:centralMPC}, and the stability of the distributed algorithm which can be achieved by decreasing $T$. It should be pointed out that no convexity assumptions on the original OCP \eqref{eq:central_ocp} are necessary and convergence is explicitly shown without critical assumptions which sets it apart from, e.g. ADMM.\cite{Bestler}
\begin{remark}
In order to improve convergence and enlarge the maximum horizon $T_{\mathrm{max}}$ the iterates of Algorithm \ref{alg:SENSI_org} can be damped as first proposed in \cite{Graichen,Pierer3}, i.e. by performing 
\begin{align}\label{eq:damping}
	\vm x_i\indq(\tau)\leftarrow(1-\epsilon)\vm x_i\indq(\tau) + \epsilon\vm x_i\indqp(\tau)\,, \quad 
	\vm \lambda_i\indq(\tau)\leftarrow(1-\epsilon)\vm \lambda_i\indq(\tau) + \epsilon\vm \lambda_i\indqp(\tau)\,, \quad \timetau
\end{align}
as an intermediate step between Step 2 and 3 of Algorithm \ref{alg:SENSI_org} with damping factor $\epsilon \in [0,1)$. This measure seeks to prevent drastic changes during the iterations and is therefore beneficial as the sensitivities can be interpreted as a first-order approximation of the change in costs and has a strong impact on the admissible horizon \cite{Pierer3}. 
\end{remark}
\subsection{Convergence properties with stopping criterion}
\label{subsec:stab_analys}
The investigation of stability of the DMPC scheme requires to look at the error $\Delta \vm x_c\indqk(\tau; \initstate)$ in \eqref{eq:error} as the difference between the suboptimal sensitivity-based and the optimal (D)MPC solution. The following lemma bounds this error in terms of the stopping criterion \eqref{eq:stopp_crit} for all agents $i \in \mathcal{V}$.
\begin{lemma}\label{lem:error_bound}
	Suppose that Assumption \ref{ass:cost_Lipschitz} and $T<T_{\mathrm{max}}$ hold. Then, there exists a constant $D>0$ such that the error \eqref{eq:error} satisfies
	\begin{equation}\label{eq:error_bound}
		\| \Delta \vm x_c\indqk(\cdot;\initstate)\|_{L_\infty}\leq D d\|\initstate \|\,, \quad \forall \initstate \in \Gamma_\alpha\,.
	\end{equation}
\end{lemma}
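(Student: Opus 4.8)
The plan is to propagate the control perturbation caused by early termination through the closed-loop dynamics via a Grönwall argument, after first turning the stopping criterion \eqref{eq:stopp_crit} into a bound on how far the terminal iterates lie from the central optimum. \emph{Step 1 (iterate gap).} Abbreviate $e_q := \big\|(\vm x\inds(\cdot;\initstate)-\vm x^{q}(\cdot;\initstate),\,\vm\lambda\inds(\cdot;\initstate)-\vm\lambda^{q}(\cdot;\initstate))\big\|_{L_\infty}$ and $s_q := \big\|(\vm x^{q}-\vm x^{q-1},\,\vm\lambda^{q}-\vm\lambda^{q-1})\big\|_{L_\infty}$. The triangle inequality gives $e_{q-1}\le e_q+s_q$, while Lemma \ref{lem:lin_conv} (applicable since $T<T_{\max}$) gives $e_q\le p\,e_{q-1}$; together these yield $e_{q-1}\le s_q/(1-p)$ and hence $e_q\le p\,s_q/(1-p)$. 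Stacking the per-agent estimate \eqref{eq:stopp_crit} over $\agents$ and using $\|\vm q\|_{L_\infty}\le(\sum_i\|\vm q_i\|_{L_\infty}^2)^{1/2}$ for a stacked time function $\vm q=[\vm q_i]_{\agents}$ shows $s_{q_k}\le d\,\|\initstate\|$, so that $e_{q_k}\le \tfrac{p}{1-p}\,d\,\|\initstate\|$ and $e_{q_k-1}\le \tfrac{1}{1-p}\,d\,\|\initstate\|$. (Here one assumes $q_{\max}$ is large enough that termination is triggered by \eqref{eq:stopp_crit}; since $s_q\le e_q+e_{q-1}\to 0$ by Lemma \ref{lem:lin_conv}, the criterion is met after finitely many iterations whenever $\initstate\ne\vm 0$, and the assertion is trivial for $\initstate=\vm 0$.)

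\emph{Step 2 (input error).} By Lemma \ref{lem:lin_conv} the iterates converge to the central optimum, so the sensitivity-based control law \eqref{eq:DMPC_control_law} is consistent at its fixed point, i.e.\ $\vm\kappa(\vm x\inds(\tau;\initstate);\vm x\inds(\tau;\initstate),\vm\lambda\inds(\tau;\initstate),\initstate)=\vm u\inds(\tau;\initstate)$. By compactness of $\Gamma_\alpha$ and $\mathbb U$, the boundedness assumption on \eqref{eq:centr_dyn}, and Theorem \ref{th:centralMPC}, all the trajectories involved ($\vm x\indqk$, $\vm x\indqkp$, $\vm\lambda\indqkp$, $\vm x\inds$, $\vm\lambda\inds$ and the closed-loop trajectory $\vm x_c\indqk$) remain in one fixed compact set for every $\initstate\in\Gamma_\alpha$, so the local Lipschitz constant $L_\kappa$ of \eqref{eq:DMPC_control_law} from Assumption \ref{ass:cost_Lipschitz} may be taken uniform there. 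Then, for every $\timetau$,
\[
  \big\|\vm u\indqk(\tau;\initstate)-\vm u\inds(\tau;\initstate)\big\|
  \le L_\kappa\big(\|\vm x\indqk(\tau)-\vm x\inds(\tau)\|+\|\vm x\indqkp(\tau)-\vm x\inds(\tau)\|+\|\vm\lambda\indqkp(\tau)-\vm\lambda\inds(\tau)\|\big)
  \le L_\kappa\big(e_{q_k}+2\,e_{q_k-1}\big)\le L_\kappa\tfrac{p+2}{1-p}\,d\,\|\initstate\|\,.
\]

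\emph{Step 3 (Grönwall).} Subtracting \eqref{eq:opt_traj} from \eqref{eq:act_traj} and using $\vm x_c\indqk(0;\initstate)=\vm x\inds(0;\initstate)=\initstate$ together with the Lipschitz continuity of $\vm f$ (constant $L_f$) on the compact set from Step 2 yields
\[
  \|\Delta\vm x_c\indqk(\tau;\initstate)\|\le L_f\!\int_0^\tau\!\|\Delta\vm x_c\indqk(\sigma;\initstate)\|\,\dd\sigma + L_f T\,L_\kappa\tfrac{p+2}{1-p}\,d\,\|\initstate\|\,,
\]
so Grönwall's inequality gives $\|\Delta\vm x_c\indqk(\cdot;\initstate)\|_{L_\infty}\le D\,d\,\|\initstate\|$ with $D:=L_f T\,e^{L_f T}\,L_\kappa\tfrac{p+2}{1-p}$, which is the claim.

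The main obstacle I anticipate lies in Step 2: making the Lipschitz constants $L_\kappa$ and $L_f$ uniform requires showing that the predicted, actual and optimal trajectories all stay in one $\initstate$-independent compact set as $\initstate$ ranges over $\Gamma_\alpha$, and the consistency identity $\vm\kappa(\vm x\inds;\vm x\inds,\vm\lambda\inds,\initstate)=\vm u\inds$ must be pinned down — this is essentially the statement that the fixed point of Algorithm \ref{alg:SENSI_org} reproduces the central optimality conditions, which also underlies Lemma \ref{lem:lin_conv}. The rest is a routine triangle-inequality-plus-Grönwall chain.
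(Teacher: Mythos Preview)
Your proof is correct and in fact somewhat cleaner than the paper's. The key difference is the decomposition: the paper splits the error as
\[
\Delta\vm x_c\indqk = \underbrace{(\vm x_c\indqk-\vm x\indqk)}_{\vm\xi\indqk}+\underbrace{(\vm x\indqk-\vm x\inds)}_{\Delta\vm x\indqk}
\]
and bounds each piece separately via Grönwall. The first piece $\vm\xi\indqk$ measures the mismatch between the predicted trajectory (with frozen neighbor states $\hat{\vm x}\indqkp$) and the actual closed loop, and its estimate uses the neighbor-affine structure explicitly through bounds of the type $\|\hat{\vm x}_c\indqk-\hat{\vm x}\indqkp\|\le\sqrt{p}\,\|\vm x_c\indqk-\vm x\indqkp\|$; the second piece $\Delta\vm x\indqk$ is handled much as in your Step~2--3. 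You bypass this split entirely by comparing the \emph{actual} closed loop $\vm x_c\indqk$ directly to the optimum $\vm x\inds$ --- both satisfy the \emph{same} central dynamics $\vm f$, just driven by different controls --- so a single Grönwall argument suffices once the input error is controlled. The consistency identity $\vm\kappa(\vm x\inds;\vm x\inds,\vm\lambda\inds,\initstate)=\vm u\inds$ that you flag in Step~2 is exactly what the paper invokes as well (it writes $\vm\kappa\inds(\vm x\inds;\initstate)=\vm\kappa(\vm x\inds;\vm x\inds,\vm\lambda\inds,\initstate)$), and the uniform boundedness of iterates needed for the Lipschitz constants is precisely the content of the first half of the proof of Lemma~\ref{lem:lin_conv}, so your anticipated obstacle is already discharged. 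The paper's route yields constants in terms of $\Delta t$ rather than $T$ (slightly sharper) and makes the separate contributions of the prediction and optimization errors visible; your route is shorter and avoids the $\hat{\vm x}$ bookkeeping altogether. Both rely on the same two ingredients: linear convergence (to turn the stopping criterion into a distance-to-optimum bound) and Lipschitz continuity of $\vm\kappa$.
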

\begin{proof}
	See Appendix C.
\end{proof}
In the centralized optimal MPC case, the next sampling point lies on the optimal state trajectory, i.e. $\vm{\munderbar x}_{k+1} = \vm x^*(\Delta t; \initstate)$ and the  cost $J\inds(\vm x^*(\Delta t; \initstate))$ decreases according to \eqref{eq:th1}. As indicated by \eqref{eq:next_sampling_step}, this is not the case for the suboptimal distributed MPC scheme. The next lemma relates the cost decrease in the suboptimal case to the optimal cost $J\inds(\initstate)$ and the error \eqref{eq:error}.
\begin{lemma}\label{lem:cost_bound}
	Suppose that Assumptions \ref{ass:CLF} to \ref{ass:cost_Lipschitz} hold. Then, there exists some $\bar{ \alpha}<\alpha$ and correspondingly a subset $\Gamma_{\bar \alpha} \subset \Gamma_\alpha$ such that $\vm x_c\indqk(\Delta t;\initstate) \in \Gamma_\alpha$ for all $\initstate \in \Gamma_{\bar \alpha}$. Moreover, there exist constants $0< a \leq 1$ and $b,c>0$ such that the optimal cost at the next sampling point $t_{k+1}$  satisfies 
	\begin{equation} \label{eq:lemma_2}
		J\inds(	\vm x_c\indqk(\Delta t;\initstate)) \leq (1-a)J^*(\initstate)  + b \sqrt{J^*(\initstate)} \|\Delta \vm x_c\indqk(\cdot;\initstate)\|_{L_\infty} + c \|\Delta \vm x_c\indqk(\cdot;\initstate)\|_{L_\infty}^2\,,\quad \forall\initstate \in \Gamma_{\bar \alpha}\,.
	\end{equation}
\end{lemma}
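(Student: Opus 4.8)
The plan is to reduce the claim to a perturbation estimate around the nominal MPC cost decrease in Theorem~\ref{th:centralMPC}. First I would split the difference $J^*(\vm x_c^{q_k}(\Delta t;\initstate)) - J^*(\vm x^*(\Delta t;\initstate))$ and apply the nominal decrease \eqref{eq:th1} to the second term, which gives $J^*(\vm x^*(\Delta t;\initstate)) \leq J^*(\initstate) - \int_0^{\Delta t} l(\vm x^*(\tau;\initstate),\vm u^*(\tau;\initstate))\,\dd\tau$. Using the lower quadratic bound $m_l(\|\vm x\|^2+\|\vm u\|^2)\leq l(\vm x,\vm u)$ together with the cost bounds \eqref{eq:cost_bound} one can absorb the integral term into a fraction of $J^*(\initstate)$: since $\int_0^{\Delta t} l\,\dd\tau \geq m_l \int_0^{\Delta t}\|\vm x^*(\tau)\|^2\,\dd\tau$ and, by continuity of the optimal trajectory on the compact horizon, this integral is bounded below by a constant multiple of $\|\initstate\|^2 \geq J^*(\initstate)/M_J$; this produces the contraction factor $1-a$ with $a = a(\Delta t, m_l, M_J)\in(0,1]$ (taking $a=1$ only in the degenerate case where the bound is vacuous). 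The inclusion $\vm x_c^{q_k}(\Delta t;\initstate)\in\Gamma_\alpha$ for $\initstate$ in a slightly shrunk sublevel set $\Gamma_{\bar\alpha}$ follows because $J^*$ is continuous (Assumption~\ref{ass:cost_Lipschitz}), $J^*(\vm x^*(\Delta t;\initstate))\leq J^*(\initstate)\leq\bar\alpha$ by the nominal decrease, and $\|\Delta\vm x_c^{q_k}\|_{L_\infty}$ is small on $\Gamma_\alpha$ by Lemma~\ref{lem:error_bound}, so choosing $\bar\alpha$ with a buffer governed by the local Lipschitz constant of $J^*$ keeps the actual successor state inside $\Gamma_\alpha$.

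For the remaining term $J^*(\vm x_c^{q_k}(\Delta t;\initstate)) - J^*(\vm x^*(\Delta t;\initstate))$ I would invoke Assumption~\ref{ass:cost_Lipschitz}: $J^*$ is twice continuously differentiable, so on the compact set $\Gamma_\alpha$ a second-order Taylor expansion around $\vm x^*(\Delta t;\initstate)$ yields
\begin{equation*}
	J^*(\vm x_c^{q_k}(\Delta t;\initstate)) - J^*(\vm x^*(\Delta t;\initstate)) \leq \big\|\partial_{\vm x}J^*(\vm x^*(\Delta t;\initstate))\big\|\,\|\Delta\vm x_c^{q_k}(\Delta t;\initstate)\| + \tfrac{L_{J}}{2}\|\Delta\vm x_c^{q_k}(\Delta t;\initstate)\|^2\,,
\end{equation*}
where $L_J$ bounds the Hessian of $J^*$ on $\Gamma_\alpha$ and $\Delta\vm x_c^{q_k}(\Delta t;\initstate) = \vm x_c^{q_k}(\Delta t;\initstate) - \vm x^*(\Delta t;\initstate)$ is exactly the error \eqref{eq:error} evaluated at $\tau=\Delta t$, hence $\|\Delta\vm x_c^{q_k}(\Delta t;\initstate)\|\leq\|\Delta\vm x_c^{q_k}(\cdot;\initstate)\|_{L_\infty}$. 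To convert $\|\partial_{\vm x}J^*(\vm x^*(\Delta t;\initstate))\|$ into a multiple of $\sqrt{J^*(\initstate)}$, I would use that $J^*(\vm 0)=0$ is a minimum so $\partial_{\vm x}J^*(\vm 0)=\vm 0$, whence $\|\partial_{\vm x}J^*(\vm x^*(\Delta t;\initstate))\|\leq L_J\|\vm x^*(\Delta t;\initstate)\|\leq L_J\big(\|\vm x^*(\Delta t;\initstate)\| \big)$, and then bound $\|\vm x^*(\Delta t;\initstate)\|$ by a constant times $\|\initstate\|$ (continuous dependence of the optimal trajectory on the initial state on the compact set, Assumption~\ref{ass:cost_Lipschitz}) and finally $\|\initstate\|\leq\sqrt{J^*(\initstate)/m_J}$ from \eqref{eq:cost_bound}. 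Collecting the constants gives the coefficients $b$ and $c$ in \eqref{eq:lemma_2}.

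The main obstacle I anticipate is making the contraction step fully rigorous: extracting a genuine factor $(1-a)$ with $a>0$ requires a lower bound of the form $\int_0^{\Delta t} l(\vm x^*(\tau),\vm u^*(\tau))\,\dd\tau \geq a\,J^*(\initstate)$ uniformly over $\Gamma_\alpha$, and while the quadratic lower bound on $l$ handles the running cost, one must control how much of $J^*(\initstate)$ is ``front-loaded'' into $[0,\Delta t)$ versus the tail $[\Delta t, T]$ and the terminal cost. A clean way is to note that by the same nominal argument $J^*(\vm x^*(\Delta t;\initstate)) \geq 0$, so $\int_0^{\Delta t} l\,\dd\tau \leq J^*(\initstate)$, which only gives an upper bound; obtaining the needed lower bound instead relies on a controllability-type estimate linking $\|\initstate\|$ to $\int_0^{\Delta t}\|\vm x^*(\tau)\|^2\,\dd\tau$, which in turn follows from the boundedness-of-solutions hypothesis on \eqref{eq:centr_dyn} and a Gronwall argument on $[0,\Delta t]$ (the state cannot leave a ball of radius $c_1\|\initstate\|$, and by continuity stays within a ball of radius $\geq c_2\|\initstate\|$ for a positive fraction of the interval, unless $\initstate=\vm 0$). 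Packaging this cleanly, while staying agnostic about convexity of \eqref{eq:central_ocp}, is the delicate part; everything else is a routine application of smoothness of $J^*$ and the quadratic bounds.
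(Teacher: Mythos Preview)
Your proposal is correct and follows essentially the same route as the paper: split into the nominal decrease $J^*(\vm x^*(\Delta t;\initstate))\leq(1-a)J^*(\initstate)$ plus a second-order Taylor remainder $J^*(\vm x_c^{q_k})-J^*(\vm x^*(\Delta t))$, bound $\|\nabla J^*(\vm x^*(\Delta t))\|$ by a constant times $\|\vm x^*(\Delta t)\|$ via $\nabla J^*(\vm 0)=\vm 0$, and convert $\|\vm x^*(\Delta t)\|$ to $\sqrt{J^*(\initstate)}$ through a Gronwall bound and \eqref{eq:cost_bound}. The only points where the paper is slightly more direct than your sketch are: (i) the ``obstacle'' you flag is resolved by the reverse Gronwall estimate $\|\vm x^*(\tau;\initstate)\|\geq \mathrm{e}^{-\hat L\tau}\|\initstate\|$, which immediately gives $\int_0^{\Delta t}l\,\dd\tau\geq m_l\|\initstate\|^2\int_0^{\Delta t}\mathrm{e}^{-2\hat L\tau}\,\dd\tau\geq aJ^*(\initstate)$ without any ``positive fraction of the interval'' argument; and (ii) the inclusion $\vm x_c^{q_k}(\Delta t;\initstate)\in\Gamma_\alpha$ is obtained purely through the norm chain $\|\vm x_c^{q_k}(\Delta t)\|\leq(\mathrm{e}^{\hat L\Delta t}+Dd)\|\initstate\|$ combined with the quadratic bounds \eqref{eq:cost_bound}, yielding the explicit $\bar\alpha=\tfrac{m_J\alpha}{M_J(\mathrm{e}^{\hat L\Delta t}+Dd)^2}$, rather than a Lipschitz-buffer argument on $J^*$.
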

\begin{proof}
	See Appendix D.
\end{proof}
Lemma \ref{lem:cost_bound} reveals that the cost decrease and overall MPC performance is limited by the error $\|\Delta \vm x_c\indqk(\cdot;\initstate)\|_{L_\infty}$ which opposes the contraction term $(1-a)J^*(\initstate)$. Moreover, the original domain of attraction $\Gamma_{\alpha}$ is reduced to the smaller set $\Gamma_{\bar \alpha}$. Based on Lemmas \ref{lem:error_bound} and \ref{lem:cost_bound}, the stability of the DMPC scheme under inexact minimization can be shown.
\begin{theorem} \label{th:sub_MPC}
	Suppose that Assumptions \ref{ass:CLF} through \ref{ass:cost_Lipschitz} as well as $T< T_{\mathrm{max}}$ hold and let the constant $d$ in the stopping criterion~\eqref{eq:stopp_crit} satisfy 
	\begin{equation} \label{eq:th2}
		d < \frac{\sqrt{m_J}}{2cD}\bigg( \sqrt{b^2 + 4ac} - b\bigg)\,.
	\end{equation}
	Then, the optimal cost \eqref{eq:opt_cost} and the error \eqref{eq:error} decay exponentially for all $\vm {\munderbar x}_0 \in \Gamma_{\bar \alpha}$ and the origin of the closed-loop system under the DMPC control law \eqref{eq:DMPC_control_law} is exponentially stable.
\end{theorem}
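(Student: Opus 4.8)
The plan is to combine the two preceding lemmas into a single recursive inequality for the optimal cost along the closed-loop trajectory and then show that this recursion forces exponential decay. First I would invoke Lemma~\ref{lem:error_bound} to replace the error term in Lemma~\ref{lem:cost_bound}: since $T<T_{\max}$, the bound $\|\Delta \vm x_c\indqk(\cdot;\initstate)\|_{L_\infty}\leq Dd\|\initstate\|$ holds, and using the lower bound $m_J\|\initstate\|^2 \leq J\inds(\initstate)$ from \eqref{eq:cost_bound} we have $\|\initstate\| \leq \sqrt{J\inds(\initstate)/m_J}$, hence $\|\Delta \vm x_c\indqk(\cdot;\initstate)\|_{L_\infty} \leq (Dd/\sqrt{m_J})\sqrt{J\inds(\initstate)}$. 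Substituting into \eqref{eq:lemma_2} gives, for all $\initstate\in\Gamma_{\bar\alpha}$,
\begin{equation*}
	J\inds(\vm x_c\indqk(\Delta t;\initstate)) \leq \Big(1 - a + \frac{bDd}{\sqrt{m_J}} + \frac{cD^2d^2}{m_J}\Big) J\inds(\initstate) =: \rho\, J\inds(\initstate)\,.
\end{equation*}
Next I would verify that the hypothesis \eqref{eq:th2} on $d$ is exactly the condition $\rho<1$: writing $\rho - 1 = -a + (bD/\sqrt{m_J})d + (cD^2/m_J)d^2$, this is a quadratic in $d$ with positive leading coefficient and negative value at $d=0$; its positive root is $d^\star = \frac{\sqrt{m_J}}{2cD}(\sqrt{b^2+4ac}-b)$, so $\rho<1$ precisely when $0<d<d^\star$, which is \eqref{eq:th2}. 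Thus $0<\rho<1$.

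With $\rho\in(0,1)$ fixed, I would argue by induction over the MPC steps. The key point is that Lemma~\ref{lem:cost_bound} guarantees $\vm x_c\indqk(\Delta t;\initstate)\in\Gamma_\alpha$ for $\initstate\in\Gamma_{\bar\alpha}$; but to iterate the cost recursion I need the successor state to lie in $\Gamma_{\bar\alpha}$, not merely in $\Gamma_\alpha$. This is where I expect the one genuine subtlety: since $J\inds(\vm x_c\indqk(\Delta t;\initstate)) \leq \rho J\inds(\initstate) \leq \rho\bar\alpha < \bar\alpha$, the successor state actually satisfies $J\inds(\vm{\munderbar x}_{k+1}) \leq \bar\alpha$, so $\vm{\munderbar x}_{k+1}\in\Gamma_{\bar\alpha}$; hence the set $\Gamma_{\bar\alpha}$ is forward invariant under the closed loop and the recursion can be applied at every step. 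Using \eqref{eq:next_sampling_step}, i.e.\ $\vm{\munderbar x}_{k+1} = \vm x_c\indqk(\Delta t;\initstate)$ in the nominal case, induction yields $J\inds(\vm{\munderbar x}_k) \leq \rho^k J\inds(\vm{\munderbar x}_0)$ for all $k$. Combining with the quadratic bounds \eqref{eq:cost_bound} gives $\|\vm{\munderbar x}_k\|^2 \leq \frac{1}{m_J}J\inds(\vm{\munderbar x}_k) \leq \frac{M_J}{m_J}\rho^k\|\vm{\munderbar x}_0\|^2$, which is exponential decay of the sampled state; a standard inter-sample argument using the boundedness of solutions on $[0,T]$ (the standing assumption on \eqref{eq:centr_dyn}) and the local Lipschitz continuity of the feedback law from Assumption~\ref{ass:cost_Lipschitz} extends this to continuous time, establishing exponential stability of the origin.

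Finally, for the exponential decay of the error \eqref{eq:error} itself, I would feed the just-proven bound $J\inds(\vm{\munderbar x}_k)\leq \rho^k J\inds(\vm{\munderbar x}_0) \leq M_J\rho^k\|\vm{\munderbar x}_0\|^2$ back into Lemma~\ref{lem:error_bound}: at MPC step $k$ the error obeys $\|\Delta \vm x_{c}^{q_k}(\cdot;\vm{\munderbar x}_k)\|_{L_\infty} \leq Dd\|\vm{\munderbar x}_k\| \leq Dd\sqrt{M_J/m_J}\,\rho^{k/2}\|\vm{\munderbar x}_0\|$, so the per-step suboptimality measure decays geometrically at rate $\sqrt{\rho}$. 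The main obstacle in this proof is thus not any hard estimate — those are delivered by Lemmas~\ref{lem:error_bound} and \ref{lem:cost_bound} — but rather the bookkeeping that closes the induction: confirming that \eqref{eq:th2} is equivalent to $\rho<1$ and, crucially, that the contraction $\rho<1$ is what promotes the invariance of $\Gamma_\alpha$ (from Lemma~\ref{lem:cost_bound}) into invariance of the smaller sublevel set $\Gamma_{\bar\alpha}$ needed to re-apply the recursion indefinitely.
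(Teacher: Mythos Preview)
Your proposal is correct and follows essentially the same route as the paper: combine Lemmas~\ref{lem:error_bound} and~\ref{lem:cost_bound} through the lower bound in \eqref{eq:cost_bound} to obtain the contraction $J\inds(\vm{\munderbar x}_{k+1})\le\rho\,J\inds(\initstate)$ with your $\rho$ equal to the paper's $p_J$, observe that \eqref{eq:th2} is precisely the condition $\rho<1$, and iterate. Your explicit check that $\Gamma_{\bar\alpha}$ is forward invariant is a detail the paper leaves implicit, while conversely the paper makes the inter-sample step concrete via the bound $\|\vm x_c\indqk(\tau;\initstate)\|\le(\mathrm{e}^{\hat L\tau}+Dd)\|\initstate\|$ (using \eqref{eq:state_upper_bound}) rather than invoking a generic boundedness argument.
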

\begin{proof}
	The result of Lemma \ref{lem:error_bound} can be expressed in terms of the optimal cost $J\inds(\initstate)$ using the quadratic bound \eqref{eq:cost_bound}, i.e.
	\begin{equation} \label{eq:bound_delta_x_opt_cost}
		\| \Delta \vm  x_c\indqk(\cdot;\initstate)\|_{L_\infty} \leq \frac{D d}{\sqrt{m_J}} \sqrt{J\inds(\initstate)}
	\end{equation}
	which, inserted in the relation \eqref{eq:lemma_2} of Lemma \ref{lem:cost_bound}, results in a bound on the cost in the next MPC step 
	\begin{align}\label{eq:contr_optimalCost}
		J\inds(	\vm x_c\indqk(\Delta t;\initstate) \leq p_J J\inds(\initstate)
	\end{align} 
	with $p_J:= (1-a) + \frac{bDd}{\sqrt{m_J}} + \frac{cD^2d^2}{m_J}$. By \eqref{eq:th2}, the contraction ratio satisfies $p_J<1$ which in return implies the exponential decay of the optimal cost $J\inds(\vm {\munderbar x}_k)= J\inds(\vm x_c\indqk(\Delta t;\vm x_{k-1}))$
	\begin{align} \label{eq:cost_decrease}
		J\inds(\initstate) \leq  p_J J\inds(\vm {\munderbar x}_{k-1})  \leq (p_J)^{k} J^*(\vm {\munderbar x}_0) \leq (p_J)^{k}\bar{\alpha}
	\end{align}
	and of the error $\| \Delta \vm  x_c\indqk(\cdot;\initstate)\|_{L_\infty}$ by \eqref{eq:bound_delta_x_opt_cost}
	\begin{equation} \label{eq:bound_on_error}
		\| \Delta \vm  x_c\indqk(\cdot;\initstate)\|_{L_\infty} \leq \frac{D d}{\sqrt{m_J}}  \sqrt{ (p_J)^k \bar{ \alpha}}\,.
	\end{equation}
	To show exponential stability in continuous time, the state trajectory of the closed loop, i.e. $\vm{\munderbar x}(t) = \vm{\munderbar x}(t_k + \tau) = \vm x_c\indqk(\tau; \initstate)$, $ \timetausample$, $k \in \mathds{N}_0$ needs to be bounded by an exponential envelope function. Using the triangle inequality, the bound \eqref{eq:state_upper_bound}, and the bound \eqref{eq:bound_on_error}, results in
	\begin{align} \label{eq:cont_time_stability}
		\|  \vm x_c\indqk(\tau; \initstate)\| &\leq \| \vm x\inds(\tau; \initstate)\| + \|\Delta \vm x\indqk_c(\tau; \initstate)\| \leq \|\initstate\| \mathrm{e}^{\hat L \tau } + \|\Delta \vm x\indqk_c(\cdot; \initstate)\|_{L_\infty} \nonumber\leq  (\mathrm{e}^{\hat L \tau } + D d ) \|\initstate\| \\
		&\leq  (\mathrm{e}^{\hat L \tau } + D d ) \frac{1}{\sqrt{m_J}} \sqrt{J\inds(\initstate)}\leq  (\mathrm{e}^{\hat L \tau } + D d ) \frac{1}{\sqrt{m_J}} (p_J)^{\frac{1}{2}k} \sqrt{J\inds(\vm {\munderbar x}_0)}
	\end{align}
	which bounds the state trajectory in each MPC step $k$ for $\timetausample$. The exponential decay of \eqref{eq:cont_time_stability} implies that there exists constants $\gamma_1,\, \gamma_2>0$ such that $\|\vm x(t)\| \leq \gamma_1 \mathrm{e}^{- \gamma_2 t}$.
\end{proof}
Theorem \ref{th:sub_MPC} shows that an explicit value for $d$ can be computed offline ensuring stability and incremental improvement of the DMPC scheme. In particular, it is evident that the value of $d>0$ controls the contraction rate $p_J$ in \eqref{eq:contr_optimalCost} and the optimal (D)MPC case, i.e. $p_J \rightarrow (1-a)$, is recovered for $d \rightarrow 0$. In fact, it is possible to explicitly control the worst-case contraction rate $p_J$ in the interval $(1-a,1)$ via $d$.  However, calculating the value of $d$ via \eqref{eq:th2} is usually too conservative for design purposes due to the various involved Lipschitz and continuity estimates. Nevertheless, Theorem \ref{th:sub_MPC} states that a stabilizing $d$ can always be found.  

\begin{remark}
Compared to the scheme given in the work\cite{Bestler}, we do not require to assume boundedness and linear convergence of the underlying distributed optimization algorithm to show stability of the MPC scheme, but explicitly prove that there exists an upper bound on the horizon such that the iterates are bounded and linear convergence of Algorithm \ref{alg:SENSI_org} is ensured. The drawback is that the horizon cannot be chosen arbitrarily long which can possibly decrease the performance of the (D)MPC-controller. However, this can be mitigated by damping the trajectories, cf. \eqref{eq:damping}.
\end{remark}

\subsection{Convergence properties with a fixed number of iterations}
\label{subsec:fixed_iter}
In the following, it is discussed how the stopping criterion \eqref{eq:stopp_crit} can be equivalently fulfilled with a finite number of iterations $q$ without explicitly evaluating it. The following theorem precisely states a lower bound on the required iterations $q_{\mathrm{max}}$. 
\begin{theorem} \label{th:sub_MPC_num_iter}
	Suppose that Assumptions \ref{ass:CLF} through \ref{ass:cost_Lipschitz} as well as $T< T_{\mathrm{max}}$ hold and $d$ is chosen according to Theorem \ref{th:sub_MPC}. Then, for all $\vm {\munderbar x}_0 \in \Gamma_{\bar \alpha}$ there exists a constant $e>0$ such that if the number of iterations per MPC step and the initial optimization error satisfy 
	\begin{equation}\label{eq:th3}
		q_{\mathrm{max}}> 1 + \log_p\bigg(\frac{d}{e(1+p)}\bigg)\,, \quad \bigg\|\begin{matrix}
			\vm x^1(\cdot; \vm {\munderbar x}_0) - \vm x^{0}(\cdot; \vm {\munderbar x}_0) \\
			\vm \lambda^1(\cdot; \vm {\munderbar x}_0) - \vm \lambda^{0}(\cdot; \vm {\munderbar x}_0) \\
		\end{matrix}\bigg\|_{L_\infty} \leq d N \sqrt{\frac{\bar \alpha}{m_J}} p^{1-q_{\mathrm{max}}}\,,
	\end{equation}
	then the origin of the closed-loop system under the DMPC control law \eqref{eq:DMPC_control_law} is exponentially stable and the optimization error~\eqref{eq:error} decays exponentially.
\end{theorem}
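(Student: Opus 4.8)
The plan is to show that executing $q_{\mathrm{max}}$ iterations of Algorithm~\ref{alg:SENSI_org} in every MPC step forces the stopping criterion~\eqref{eq:stopp_crit} to be met at its last iteration $q_k=q_{\mathrm{max}}$, so that Theorem~\ref{th:sub_MPC} can be invoked verbatim. Writing $\vm z^q:=(\vm x^q,\vm\lambda^q)$ and $\vm z\inds:=(\vm x\inds,\vm\lambda\inds)$, I would first read off from the proof of Lemma~\ref{lem:lin_conv} that, for $T<T_{\mathrm{max}}$, the per-step iteration map of Algorithm~\ref{alg:SENSI_org} is a $p$-contraction with fixed point $\vm z\inds(\cdot;\initstate)$; this yields, in every MPC step,
\begin{equation}
	\|\vm z^q(\cdot;\initstate)-\vm z^{q-1}(\cdot;\initstate)\|_{L_\infty}\le p^{\,q-1}\,\|\vm z^1(\cdot;\initstate)-\vm z^0(\cdot;\initstate)\|_{L_\infty}
\end{equation}
together with $\|\vm z^1-\vm z^0\|_{L_\infty}\le(1+p)\|\vm z^0-\vm z\inds\|_{L_\infty}$. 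Summing the squared per-agent inequalities in~\eqref{eq:stopp_crit} (the passage from the stacked to the agentwise $L_\infty$-norm producing the factor $N$ appearing in~\eqref{eq:th3}), it then suffices to guarantee $p^{\,q_{\mathrm{max}}-1}\|\vm z^1(\cdot;\initstate)-\vm z^0(\cdot;\initstate)\|_{L_\infty}\le d\|\initstate\|$ in each MPC step.

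Next I would bound $\|\vm z^1(\cdot;\initstate)-\vm z^0(\cdot;\initstate)\|_{L_\infty}$. For $k=0$, the initialization map $\initstate\mapsto\vm z^1(\cdot;\initstate)-\vm z^0(\cdot;\initstate)$ is continuously differentiable on the compact set $\Gamma_{\bar\alpha}$ and vanishes at the origin (for $\initstate=\vm 0$ one has $\vm z^0\equiv\vm 0$, hence $\vm z^1\equiv\vm 0$), which furnishes the constant $e>0$ of~\eqref{eq:th3} through $\|\vm z^1(\cdot;\initstate)-\vm z^0(\cdot;\initstate)\|_{L_\infty}\le e\|\initstate\|$; the second requirement in~\eqref{eq:th3} is exactly the quantitative form of this estimate at $\vm{\munderbar x}_0$ that survives the $p^{\,q_{\mathrm{max}}-1}$ contraction. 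For $k\ge 1$, the warm start~\eqref{eq:warm_start} combined with the contraction property gives $\|\vm z^1(\cdot;\initstate)-\vm z^0(\cdot;\initstate)\|_{L_\infty}\le(1+p)\eta_k$, where $\eta_k:=\|\vm z^0(\cdot;\initstate)-\vm z\inds(\cdot;\initstate)\|_{L_\infty}$ is the warm-start suboptimality. Since the lower bound~\eqref{eq:th3} on $q_{\mathrm{max}}$ is equivalent to $(1+p)\,e\,p^{\,q_{\mathrm{max}}-1}<d$, the whole claim reduces to proving $\eta_k\le e\|\initstate\|$ for every $k$, which I would establish by induction on $k$ — enlarging $e$ if necessary, at the sole cost of requiring $q_{\mathrm{max}}$ correspondingly larger in~\eqref{eq:th3}.

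The base case $k=0$ is the estimate just obtained. For the step, $\eta_k\le e\|\initstate\|$ together with the reductions above and $(1+p)\,e\,p^{\,q_{\mathrm{max}}-1}<d$ shows that~\eqref{eq:stopp_crit} is satisfied at iteration $q_{\mathrm{max}}$ of step $k$; Lemma~\ref{lem:error_bound} and the argument of Theorem~\ref{th:sub_MPC} then yield $\|\Delta\vm x_c\indqk(\cdot;\initstate)\|_{L_\infty}\le D d\|\initstate\|$ and $J\inds(\vm x_c\indqk(\Delta t;\initstate))\le p_J\,J\inds(\initstate)$ with $p_J<1$, so $\vm{\munderbar x}_{k+1}\in\Gamma_{\bar\alpha}$. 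Using $\vm z^0(\cdot;\vm{\munderbar x}_{k+1})=\vm z\indqk(\cdot;\initstate)$ and the Lipschitz continuity of $\initstate\mapsto\vm z\inds(\cdot;\initstate)$ implied by Assumption~\ref{ass:cost_Lipschitz},
\begin{equation}
	\eta_{k+1}\le\|\vm z\indqk(\cdot;\initstate)-\vm z\inds(\cdot;\initstate)\|_{L_\infty}+L_z\|\initstate-\vm{\munderbar x}_{k+1}\|\le\big(\,p^{\,q_{\mathrm{max}}}e+L_z(\bar L+D d)\,\big)\|\initstate\|\,,
\end{equation}
where I use $\|\vm z\indqk(\cdot;\initstate)-\vm z\inds(\cdot;\initstate)\|_{L_\infty}\le p^{\,q_{\mathrm{max}}}\eta_k$ and $\|\initstate-\vm{\munderbar x}_{k+1}\|\le\|\initstate-\vm x\inds(\Delta t;\initstate)\|+\|\Delta\vm x_c\indqk(\cdot;\initstate)\|_{L_\infty}\le(\bar L+D d)\|\initstate\|$, the last bound holding because the displacement of the optimal trajectory over the finite interval $[0,\Delta t)$ is proportional to $\|\initstate\|$ (Lipschitzness of $\vm f$, closed-loop controls vanishing at the origin). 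Taking $e$ large relative to $L_z(\bar L+D d)$ and $q_{\mathrm{max}}$ large enough that $p^{\,q_{\mathrm{max}}}$ is small, the coefficient in the last display falls below $e\,\underline c$, where $\underline c\,\|\initstate\|\le\|\vm{\munderbar x}_{k+1}\|\le\bar c\,\|\initstate\|$ is the two-sided estimate that again holds over one sampling interval; hence $\eta_{k+1}\le e\|\vm{\munderbar x}_{k+1}\|$, closing the induction.

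Once~\eqref{eq:stopp_crit} holds at iteration $q_{\mathrm{max}}$ of every MPC step, the assertion follows immediately from Theorem~\ref{th:sub_MPC}: the optimal cost contracts by $p_J\in(0,1)$ per step, so $J\inds(\initstate)\le(p_J)^{k}\bar\alpha$, and the error~\eqref{eq:error} together with the closed-loop state decay exponentially exactly as in~\eqref{eq:cost_decrease}--\eqref{eq:cont_time_stability}, giving exponential stability of the origin. I expect the main obstacle to be the inductive control of $\eta_k$: one has to rule out that the warm-started initialization moves away from the new optimal solution faster than the closed-loop state contracts, which forces the simultaneous use of the Lipschitz dependence of the optimal solution on the initial state and of a two-sided estimate relating $\vm{\munderbar x}_{k+1}$ to $\initstate$ across one sampling interval with constants uniform in $k$ — and it is precisely this estimate that fixes both the lower bound on $q_{\mathrm{max}}$ and the quantitative initialization condition in~\eqref{eq:th3}.
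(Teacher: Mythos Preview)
Your proposal is correct and follows essentially the same route as the paper: induction on the MPC step $k$ to certify that the stopping criterion~\eqref{eq:stopp_crit} is met at iteration $q_{\mathrm{max}}$, using the $p$-contraction of Lemma~\ref{lem:lin_conv}, the warm start~\eqref{eq:warm_start}, Lipschitz continuity of $\initstate\mapsto(\vm x\inds,\vm\lambda\inds)$, and a two-sided estimate between $\|\initstate\|$ and $\|\vm{\munderbar x}_{k+1}\|$, after which Theorem~\ref{th:sub_MPC} gives exponential stability. The only notable organisational difference is that you carry the induction on the warm-start suboptimality $\eta_k=\|\vm z^0(\cdot;\initstate)-\vm z\inds(\cdot;\initstate)\|_{L_\infty}$, whereas the paper carries it directly on the stopping-criterion inequality; the two are equivalent via the contraction, and the paper simply makes the constant $e=c_1+c_2$ explicit (with $c_1=\tfrac{Ndp}{(1-p)(\mathrm{e}^{-\hat L\Delta t}-Dd)}$ and $c_2=(L_\lambda+L_x)(1+\tfrac{1}{\mathrm{e}^{-\hat L\Delta t}-Dd})$) rather than appealing to ``enlarging $e$ if necessary''. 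One small point: your base-case paragraph mixes two separate observations --- the Lipschitz bound $\|\vm z^1-\vm z^0\|\le e\|\vm{\munderbar x}_0\|$ coming from differentiability at the origin, and the explicit hypothesis in~\eqref{eq:th3}; in the paper only the latter is used, since the second inequality in~\eqref{eq:th3} is an assumption on the specific $\vm{\munderbar x}_0$, not something to be derived.
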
 
\begin{proof}
	The proof consists of showing by induction that the stopping criterion, i.e. 
	\begin{equation} \label{eq:stopping_critertion_th3}
		\bigg\| \begin{matrix}
			\delta \vm x^q(\cdot;\initstate)\\ \delta \vm \lambda^q(\cdot;\initstate)
		\end{matrix} \bigg \|_{L_\infty} \leq d \| \initstate\|\,,
	\end{equation}
	with $\delta \vm x^q(\tau;\initstate) :=  \vm x\indq(\tau; \initstate) - \vm x\indqp(\tau; \initstate)$ and $\delta \vm \lambda^q(\tau;\initstate) :=  \vm \lambda\indq(\tau; \initstate) - \vm \lambda\indqp(\tau; \initstate)$ holds in each MPC step $k$ for the given conditions \eqref{eq:th3}. Consequently, the optimal cost $J\inds(\initstate)$ and optimization error $\| \Delta \vm  x_c\indqk(\cdot;\initstate)\|_{L_\infty}$ decrease according to \eqref{eq:cost_decrease} and \eqref{eq:bound_on_error} as stated by Theorem \ref{th:sub_MPC}. Note that the linear convergence property \eqref{eq:lin_conv} can equivalently be expressed as 
	\begin{equation} \label{eq:liner_convergence}
		\bigg\| \begin{matrix}
			\delta \vm x^q(\cdot;\initstate)\\ \delta \vm \lambda^q(\cdot;\initstate)
		\end{matrix} \bigg \|_{L_\infty} \leq p^{q-1} \bigg\| \begin{matrix}
			\delta \vm x^1(\cdot;\initstate)\\ \delta \vm \lambda^1(\cdot;\initstate)
		\end{matrix} \bigg \|_{L_\infty}.
	\end{equation}
	The relation \eqref{eq:stopping_critertion_th3} must hold at $k=0$ (induction start), i.e. 
	\begin{equation}
		\bigg\| \begin{matrix}
			\delta \vm x^q(\cdot;\vm {\munderbar x}_0)\\ \delta \vm \lambda^q(\cdot;\vm {\munderbar x}_0)
		\end{matrix} \bigg \|_{L_\infty} \leq p^{q-1}\bigg\| \begin{matrix}
			\delta \vm x^1(\cdot;\vm {\munderbar x}_0)\\ \delta \vm \lambda^1(\cdot;\vm {\munderbar x}_0)
		\end{matrix} \bigg \|_{L_\infty} \leq N d \| \vm {\munderbar x}_0\|
	\end{equation}
	which in return shows that the initial optimization error must satisfy 
	\begin{equation}
		\bigg\| \begin{matrix}
			\delta \vm x^1(\cdot;\vm {\munderbar x}_0)\\ \delta \vm \lambda^1(\cdot;\vm {\munderbar x}_0)
		\end{matrix} \bigg \|_{L_\infty} \leq d N \sqrt{\frac{\bar{\alpha}}{m_J}} p^{1-q_{\mathrm{max}}}
	\end{equation}
	which is true by assumption in Theorem \ref{th:sub_MPC_num_iter}. Following the lines of Theorem \ref{th:sub_MPC}, this implies that $\| \Delta \vm x_c\indqk(\cdot;\vm {\munderbar x}_0)\|_{L_\infty}\leq D d\|\vm {\munderbar x}_0\| \leq   d\frac{D}{\sqrt{m_J}} \sqrt{\bar{\alpha}} $ which in return shows that $J\inds(\vm {\munderbar x}_1)\leq p_J J\inds(\vm {\munderbar x}_0) $ completing the induction start. We continue with the induction hypothesis that \eqref{eq:stopping_critertion_th3} holds at MPC step $k$. At MPC step $k+1$ the following bound can be given via the linear convergence property \eqref{eq:liner_convergence} and Minkowski's inequality
	\begin{align} \label{eq:induction_step}
		&\bigg\| \begin{matrix}
			\delta \vm x^q(\cdot;\vm {\munderbar x}_{k+1})\\ \delta \vm \lambda^q(\cdot;\vm {\munderbar x}_{k+1})
		\end{matrix} \bigg \|_{L_\infty} \leq  \bigg\| \begin{matrix}
			\vm x^q(\cdot; \vm {\munderbar x}_{k+1})- \vm x\inds(\cdot; \vm {\munderbar x}_{k+1})\\ \vm \lambda^q(\cdot; \vm {\munderbar x}_{k+1}) -\vm \lambda\inds(\cdot; \vm {\munderbar x}_{k+1})
		\end{matrix} \bigg \|_{L_\infty} + \bigg\| \begin{matrix}
			\vm x^{q-1}(\cdot; \vm {\munderbar x}_{k+1})- \vm x\inds(\cdot; \vm {\munderbar x}_{k+1})\\ \vm \lambda^{q-1}(\cdot; \vm {\munderbar x}_{k+1}) -\vm \lambda\inds(\cdot; \vm {\munderbar x}_{k+1})
		\end{matrix} \bigg \|_{L_\infty} \nonumber \\
		&\leq  (1 + p)p^{q-1} \bigg\| \begin{matrix}
			\vm x^0(\cdot; \vm {\munderbar x}_{k+1})- \vm x\inds(\cdot; \vm {\munderbar x}_{k+1})\\ \vm \lambda^0(\cdot; \vm {\munderbar x}_{k+1}) -\vm \lambda\inds(\cdot; \vm {\munderbar x}_{k+1})
		\end{matrix} \bigg \|_{L_\infty}  
		\leq (1 + p)p^{q-1} \bigg(  \bigg\| \begin{matrix}
			\vm x^0(\cdot; \vm {\munderbar x}_{k+1})- \vm x\inds(\cdot; \vm {\munderbar x}_k)\\ \vm \lambda^0(\cdot; \vm {\munderbar x}_{k+1}) -\vm \lambda\inds(\cdot; \vm {\munderbar x}_k)
		\end{matrix} \bigg \|_{L_\infty} + \bigg\| \begin{matrix}
			\vm x\inds(\cdot; \vm {\munderbar x}_k)- \vm x\inds(\cdot; \vm {\munderbar x}_{k+1})\\ \vm \lambda\inds(\cdot; \vm {\munderbar x}_k) -\vm \lambda\inds(\cdot; \vm {\munderbar x}_{k+1})
		\end{matrix} \bigg \|_{L_\infty} \bigg).
	\end{align}
	We now bound the first norm on the right hand side of \eqref{eq:induction_step} by using the re-initialization for the adjoint state $ \vm \lambda^0(\tau; \vm {\munderbar x}_{k+1}) = \vm \lambda^q(\tau; \vm {\munderbar x}_k)$  and state $\vm x^0(\tau; \vm {\munderbar x}_{k+1}) = \vm x^q(\tau; \vm {\munderbar x}_k)$ as follows
	\begin{align} \label{eq:suboptimal_bound}
		\bigg\| \begin{matrix}
			\vm x^0(\cdot; \vm {\munderbar x}_{k+1})- \vm x\inds(\cdot; \vm {\munderbar x}_k)\\ \vm \lambda^0(\cdot; \vm {\munderbar x}_{k+1}) -\vm \lambda\inds(\cdot; \vm {\munderbar x}_k)
		\end{matrix} \bigg \|_{L_\infty} = \bigg\| \begin{matrix}
			\vm x^q(\cdot; \vm {\munderbar x}_k)- \vm x\inds(\cdot; \vm {\munderbar x}_k)\\ \vm \lambda^q(\cdot; \vm {\munderbar x}_k) -\vm \lambda\inds(\cdot; \vm {\munderbar x}_k)
		\end{matrix} \bigg \|_{L_\infty} \leq \frac{p}{1-p} \bigg\| \begin{matrix}
			\vm x^q(\cdot; \vm {\munderbar x}_k)- \vm x\indqp(\cdot; \vm {\munderbar x}_k)\\ \vm \lambda^q(\cdot; \vm {\munderbar x}_k) -\vm \lambda\indqp(\cdot; \vm {\munderbar x}_k)
		\end{matrix} \bigg \|_{L_\infty}  \leq \frac{p}{1-p}Nd \|\vm {\munderbar x}_k\|\,,
	\end{align}
	where the induction hypothesis \eqref{eq:stopping_critertion_th3} was used in the last inequality of \eqref{eq:suboptimal_bound}. Regarding the second norm on the right hand side of \eqref{eq:induction_step}, Equations  \eqref{eq:optstate_bound} and \eqref{eq:bound_opt_lambda} in Appendix E show that there exists Lipschitz constants $L_{x},L_{\lambda}>0$ such that  $\|\vm x\inds(\cdot; \vm {\munderbar x}_{k+1})- \vm x\inds(\cdot; \vm {\munderbar x}_{k})\|_{L_\infty} \leq L_{x} \| \vm {\munderbar x}_{k+1} - \initstate \| $ and $\|\vm \lambda\inds(\cdot; \vm {\munderbar x}_{k+1})- \vm \lambda\inds(\cdot; \vm {\munderbar x}_{k})\|_{L_\infty} \leq L_{\lambda} \| \vm {\munderbar x}_{k+1} - \initstate \| $ for all $\initstate \in \Gamma_\alpha$, $k \in \mathbb{N}_0$ (recall that $\vm {\munderbar x}_{k+1} \in \Gamma_\alpha$ due to Lemma \ref{lem:cost_bound}) with which the norm can be bounded as 
	\begin{align} \label{eq:optimal_bound}
		\bigg\| \begin{matrix} 
			\vm x\inds(\cdot; \vm {\munderbar x}_k)- \vm x\inds(\cdot; \vm {\munderbar x}_{k+1})\\ \vm \lambda\inds(\cdot; \vm {\munderbar x}_k) -\vm \lambda\inds(\cdot; \vm {\munderbar x}_{k+1})
		\end{matrix} \bigg \|_{L_\infty} \leq \|\vm x\inds(\cdot; \vm {\munderbar x}_k)- \vm x\inds(\cdot; \vm {\munderbar x}_{k+1})\|_{L_\infty} + \|\vm \lambda\inds(\cdot; \vm {\munderbar x}_k)- \vm \lambda\inds(\cdot; \vm {\munderbar x}_{k+1})\|_{L_\infty} \leq (L_\lambda + L_x) \| \vm {\munderbar x}_{k+1} - \initstate \|.
	\end{align} 
	The norm of the current system state $ \initstate$ can be related to the next state $ \vm {\munderbar x}_{k+1} = \vm x_c^{q_k}(\Delta t; \initstate)$ via the error \eqref{eq:error} 
	\begin{align}
		\| \vm x_c^{q_k}(\Delta t; \initstate)\| = \| \vm x\inds(\Delta t; \initstate) + \Delta \vm x_c^{q_k}(\Delta t; \initstate) \| \geq \|\vm x\inds(\Delta t; \initstate) \| - \|\Delta \vm x_c^{q_k}(\Delta t; \initstate)\| \geq ( \mathrm{e}^{-\hat L \Delta t} - D d)\| \initstate\|\,,
	\end{align}
	where the reverse triangle inequality together with \eqref{eq:state_lower_bound} and \eqref{eq:error_bound} was used.
	If $d \leq \mathrm{e}^{-\hat L \Delta t}/ D$, then $( \mathrm{e}^{-\hat L \Delta t} - D d)>0$ and we can upper bound the current state $\|\initstate \|$ by the next state $\|\vm {\munderbar x}_{k+1}\|$ according to 
	\begin{equation} \label{eq:state_bound}
		\| \initstate\| \leq \frac{1}{\mathrm{e}^{-\hat L \Delta t} - D d} \| \vm {\munderbar x}_{k+1}\|.
	\end{equation}
	Utilizing \eqref{eq:state_bound} to bound the current state $\| \initstate\|$ in \eqref{eq:optimal_bound} and \eqref{eq:suboptimal_bound} via the next state $ \vm {\munderbar x}_{k+1}$ leads to 
	\begin{align}
		\bigg\| \begin{matrix}
			\vm x^q(\cdot;\vm {\munderbar x}_{k})- \vm x\inds(\cdot; \vm {\munderbar x}_{k})\\ \vm \lambda^q(\cdot; \vm {\munderbar x}_{k}) -\vm \lambda\inds(\cdot; \vm {\munderbar x}_{k})
		\end{matrix} \bigg \|_{L_\infty} \leq c_1 \| \vm {\munderbar x}_{k+1} \|\,, \quad \bigg\| \begin{matrix} 
			\vm x\inds(\cdot; \vm {\munderbar x}_{k})- \vm x\inds(\cdot; \vm {\munderbar x}_{k+1})\\ \vm \lambda\inds(\cdot; \vm {\munderbar x}_k) -\vm \lambda\inds(\cdot; \vm {\munderbar x}_{k+1})
		\end{matrix} \bigg \|_{L_\infty} \leq c_2 \| \vm {\munderbar x}_{k+1} \|
	\end{align} 
	with $c_1:= \frac{Ndp}{(1-p)(\mathrm{e}^{-\hat L \Delta t} - D d)}>0 $ and $c_2 := (L_\lambda + \bar L \mathrm{e}^{\hat L T})(1+\frac{1}{\mathrm{e}^{-\hat L \Delta t} - D d})>0$. Inserting these bounds into \eqref{eq:induction_step} results in 
	\begin{align}
		\bigg\| \begin{matrix}
			\delta \vm x^q(\cdot;\vm {\munderbar x}_{k+1})\\ \delta \vm \lambda^q(\cdot;\vm {\munderbar x}_{k+1})
		\end{matrix} \bigg \|_{L_\infty} \leq (1 + p) (c_1 +c_2)p^{q-1} \|\vm {\munderbar x}_{k+1}\|.
	\end{align}
	Utilizing the condition on the iteration number \eqref{eq:th3} with $e:= c_1 + c_2>0$, the bound \eqref{eq:induction_step} eventually becomes 
	\begin{align}
		\bigg\| \begin{matrix}
			\delta \vm x^q(\cdot;\vm {\munderbar x}_{k+1})\\ \delta \vm \lambda^q(\cdot;\vm {\munderbar x}_{k+1})
		\end{matrix} \bigg \|_{L_\infty} \leq d \|\vm {\munderbar x}_{k+1}\|\,,
	\end{align}
	which completes the induction step. Following the lines of Theorem \ref{th:sub_MPC}, this implies the exponential decay of the error, i.e. $\| \Delta \vm x_c\indqk(\cdot;\initstate)\|_{L_\infty}\leq   d\frac{D}{\sqrt{m_J}} \sqrt{p_J^k \bar{\alpha}} $, which in return shows the exponential decay of the cost, i.e. $J\inds(\initstate)\leq p_J^k \bar{\alpha}$.
\end{proof}
Theorem \ref{th:sub_MPC_num_iter} shows that there exists a sufficiently large number of iterations $q_{\mathrm{max}}$ such that the stopping criterion \eqref{eq:stopp_crit} is fulfilled in every MPC iteration, thus, guaranteeing exponential stability of the DMPC scheme. Executing Algorithm \ref{alg:SENSI_org} with a fixed number of iterations per MPC step has the advantage that the computation time on the agent level and the communication steps are kept constant throughout the MPC steps which is critical in real-time applications. 
\begin{remark}
The number of iterations $q_k$ for satisfying the stopping criterion \eqref{eq:stopp_crit} will be lower in practice since \eqref{eq:th3} is a worst-case estimate for the required number of iterations. Typically, the required number of iterations is highest in the first MPC step $k=0$ due to the "cold" start with some initial trajectories and decreases with the progression of the MPC scheme to the warm start \eqref{eq:warm_start}, see also the numerical example in Section \ref{subsec:benchmark}.
\end{remark}

%

\section{Numerical Evaluation}
\label{sec:num_eval}
Two numerical examples are utilized to demonstrate the performance of Algorithm \ref{alg:SENSI_org} and its theoretical properties. At first, a nonlinear system of three agents is utilized to compare the distributed to the central solution and to verify the linear convergence property. In addition, a comparison to the ADMM algorithm\cite{Boyd} is presented. The next example concerns the distributed end region and the effect of the suboptimal predicted trajectories on the closed-loop behavior. Algorithm~\ref{alg:SENSI_org} is implemented in C++ within the modular DMPC framework GRAMPC-D\cite{Burk2,Pierer} in which the local OCPs \eqref{eq:local_ocp} are solved with the toolbox GRAMPC\cite{Englert} via the projected gradient method.  
\subsection{Benchmark system}
\label{subsec:benchmark}
In this section, a typical benchmark system which is often used in nonlinear DMPC \cite{Dunbar,Bestler,Zhou.2017,Zhou.2017b,Zhou.2019} is considered for the application of the sensitivity-based DMPC scheme. The dynamics of the system describing a bipedal locomotor experiment are given by the following nonlinear differential equations  
\begin{align} 
	\ddot \theta_1 &= 0.1(1 -5.25 \theta_1^2)\dot \theta_1 - \theta_1 + u_1  \nonumber\\
	\ddot \theta_2 &= 0.001(1 - 6070\theta_2^2)\dot \theta_2 - 4\theta_2 + 0.057\theta_1 \dot \theta_1 + 0.1(\dot \theta_2 - \dot \theta_3) + u_2 \label{eq:VDP} \\
	\ddot \theta_3 &= 0.001(1 - 192\theta_3^2 ) \dot\theta_3 - 4\theta_3+ 0.057\theta_1 \dot \theta_1 + 0.1(\dot \theta_3 - \dot \theta_2) +u_3 \nonumber
\end{align}
with the states $\vm x_i = [\theta_i, \dot \theta_i]$ and controls $u_i $ of each agent $ i \in \mathcal{V}=\{1,2,3\}$ for which the system \eqref{eq:VDP} can be represented in the neighbor-affine form \eqref{eq:agent_dyn}. The coupling structure of the system follows as $\mathcal{N}_1^\rightarrow=\{2,3\}$, $\mathcal{N}_2^\rightarrow=\{3\}$, $\mathcal{N}_3^\rightarrow=\{2\}$ and $\mathcal{N}_1^\leftarrow=\{\}$, $\mathcal{N}_2^\leftarrow=\{1,3\}$, $\mathcal{N}_3^\leftarrow=\{1,2\}$. 
The controls are constrained to the set $ u_i(t) \in [-1,1]$, $\forall \agents$ and the control task is to drive the system from its stable limit cycle to the origin. To this end, the quadratic cost functions 
$
l_i(\vm x_i, u_i) = \vm x_i\trans \vm Q_i \vm x_i + R_i u_i^2,\, V_i(\vm x_i) = \vm x_i\trans \vm P_i \vm x_i
$
with the weighting matrices $ \vm Q_i = \mathrm{diag}(30,30)$ and $ R_i = 0.1$ are employed for each agent. We follow the approach of Section \ref{subsec:DMPC_stab_opt} and linearize the system around the origin to obtain the linear system with $\vm A = \partial_{\vm x} \vm f(\vm x, \vm u)$ and $\vm B = \partial_{\vm u} \vm f(\vm x, \vm u)$ which is linearly controllable but unstable. The optimization procedure \eqref{eq:min_distCost} with $\gamma =1.2$ is utilized offline to find the terminal weights
\begin{equation}
	\vm P_1 = \begin{bmatrix}
		37.4 & 2.0 \\ 2.0 & 2.2
	\end{bmatrix}\,, \quad \vm P_2 = \vm P_3 = \begin{bmatrix}
		38.8 & 1.7 \\ 1.7 & 2.2
	\end{bmatrix}
\end{equation}
and structured feedback controller $ \vm K_{11} = [-16.3,-18.3]$, $ \vm K_{12} = \vm K_{13} = \vm 0$ and $\vm K_{22} = \vm K_{33} =[ -13.8,\, -15.3]$, $ \vm K_{23} = K_{32} = [0.02,\, 0.04]$ and $\vm K_{21} = \vm K_{31} = \vm 0$.
Both the conditions that the linear controller satisfies the input constraints, i.e. $ \munderbar{u}_i(t) \in [-1,\,1]$, and that the CLF inequality \eqref{eq:CLF_inequality} are fulfilled in the terminal region $\Omega_\beta = \{\vm x\trans \vm P \vm x \leq 0.9\}$. Finally, the prediction horizon is set to $T=3.0 \,\unit{\second}$ and the sampling time to $\Delta t = 0.05 \unit{\second}$, respectively.
The initial values for the (D)MPC simulation illustrated in Figure \ref{fig:simulVDP} are chosen as $\theta_{1,0} = 0.7$, $\theta_{2,0} = 0.28$, $\theta_{3,0} = -0.61$ and $\dot \theta_{i,0} = 0 $ according to reference\cite{Dunbar}, for all $\agents$ and the system is simulated for a total of $6 \unit{\second}$. The top two figures show a comparison of the states and controls for the centralized optimal solution and the solution obtained by Algorithm \ref{alg:SENSI_org} with $d=0.1$ in the stopping criterion \eqref{eq:stopp_crit}. Furthermore, the lower left figure shows the exponential decay of the cost function $J(\initstate) = \sum_{\agents} J_i(\vm u_i; \initstate)$ in each MPC step which is in line with Theorems \ref{th:centralMPC} and \ref{th:sub_MPC}. The required iterations $q_k$ such that the stopping criterion \eqref{eq:stopp_crit} is fulfilled are depicted in the lower right figure for different values of $d$. Clearly, a tighter stopping bound leads to more required iterations. As indicated by Theorem \ref{th:sub_MPC_num_iter} the required iterations are highest in the initial MPC step due to the large initial optimization error and then converge to a stationary value. This is especially prevalent in the case of $d=0.1$ where four algorithm iterations are required to overcome the initial optimization error and then subsequently drop to two iterations for the remaining MPC steps due to the warm-start \eqref{eq:warm_start}.
The overall communication effort can be characterized by the number of trajectories sent between an agent $i$ and its neighbor $\neighs$ in each MPC sampling step. Consequently, Algorithm \ref{alg:SENSI_org} requires a total number of $ q_k n_i (|\mathcal{N}_i^\leftarrow| + |\mathcal{N}_i|)$ trajectories to be communicated by each agent. In this example, the first MPC step for $d=0.1$ thus requires $80$ trajectories to be exchanged in the communication network and is halved to $40$ trajectories in the following steps. In practice, the trajectories are transmitted in discretized form and the actual data amount depends on the number of discretization points (21 in this example). The major advantage however is that only one local communication step is needed in each iteration $q$ which greatly reduces the overall communication time in the network.\cite{Pierer}
\begin{figure}[tb] 
	\centering
	\includegraphics{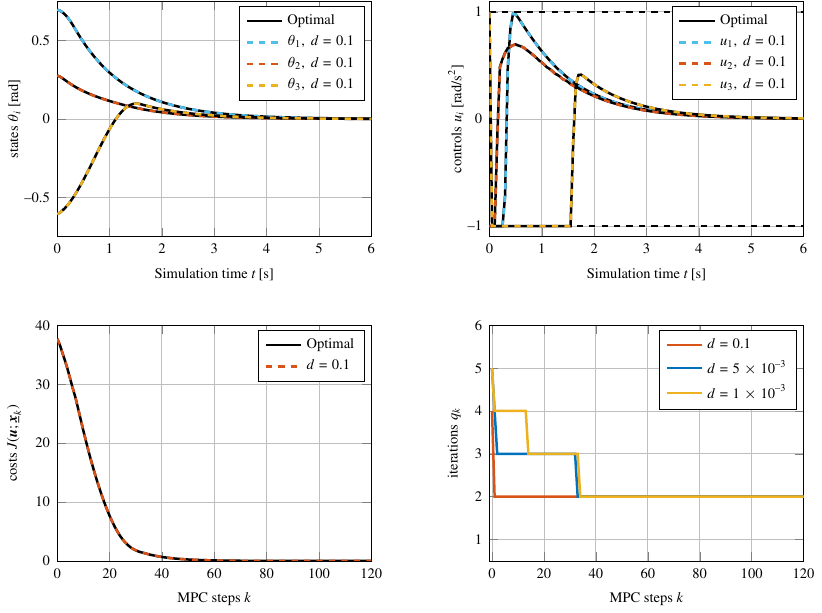}
	\caption{Closed-loop optimal and distributed solution of System \eqref{eq:VDP}. } \label{fig:simulVDP}
\end{figure}
Figure \ref{fig:conv_VDP} shows the evolution of the envelope and mean norm between optimal and suboptimal trajectories in each step $q$ of Algorithm \ref{alg:SENSI_org} in the first MPC step for different initial conditions. According to Lemma \ref{lem:lin_conv}, linear convergence in each step is guaranteed as long as the prediction horizon is chosen sufficiently small which is confirmed by this example system. It is also apparent that the adjoint states are the limiting factor for convergence as their progress is slower than that of the states which also can be seen in the proof of Lemma~\ref{lem:lin_conv}. This is mainly due to the fact that initialization is further away from the optimal solution than in the case of the states and that the "negotiation" between agents takes place via the adjoint states. 
\begin{figure}[tb] 
	\centering
	\includegraphics{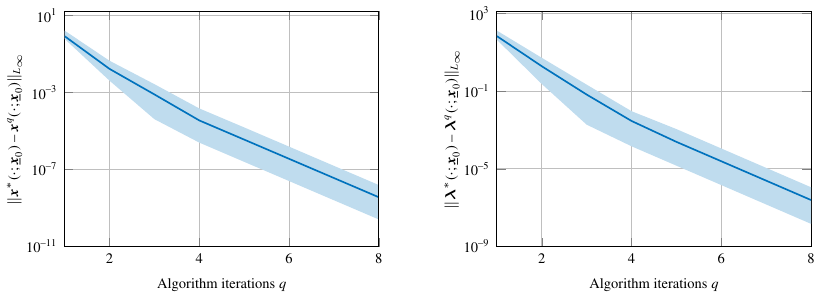}
	\caption{Envelope over all sampled trajectories together with the median convergence curve of Algorithm \ref{alg:SENSI_org} for random initial conditions in the first DMPC step $k=0$. }\label{fig:conv_VDP}
\end{figure}
In addition, a comparison with the popular ADMM algorithm \cite{Boyd} is given. In particular, we utilize the ADMM implementation of GRAMPC-D\cite{Burk2} with penalty parameter adaption and employ the stabilizing stopping criterion from \cite{Bestler} which is similar to \eqref{eq:stopp_crit} with $d=5\times 10^{-3}$ and yields approximately the same control performance. The goal of stabilizing the coupled oscillators \eqref{eq:VDP} remains the same. However, to ensure maximum comparability, we use the same cost matrices and initial values as in \cite{Bestler} to arrive at an identical setup. The convergence behavior is evaluated by observing the normalized cost progression w.r.t. the number of algorithm iterations in the first MPC step $k=0$ which is visualized in the left of Figure \ref{fig:Compare_ADMM}. The sensitivity-based algorithm takes about 4 iterations to convergence while the ADMM algorithm requires approximately 30 iterations to converge towards the central solution. Furthermore, the number algorithm iterations $q_k$ to achieve a stabilizing solution is depicted on the right of Figure \ref{fig:Compare_ADMM}. Clearly, the required iterations $q_k$ in each MPC-step of the proposed algorithm are lower compared to ADMM underpinning the promising application possibilities of the sensitivity-based algorithm to DMPC.
\begin{figure}[tb] 
	\centering
	\includegraphics{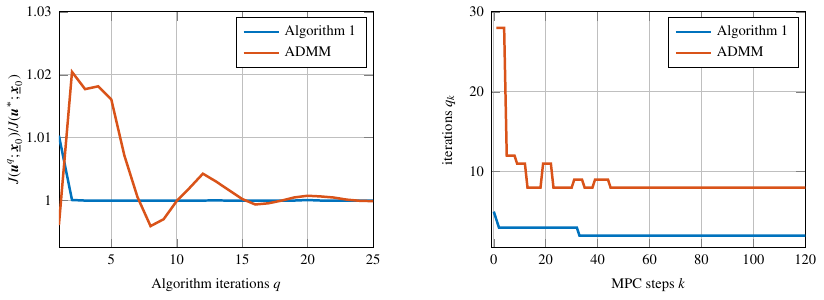}
	\caption{Comparison of sensitivity-based and ADMM-based DMPC in terms of the convergence behavior in the first DMPC step (left) as well as the required iterations for a stabilizing solution (right).} \label{fig:Compare_ADMM}
\end{figure}
\subsection{Distributed end region}
The next example concerns the distributed end region and investigates how the suboptimality of the predicted trajectories affect the region of attraction. As an example we use a similar system as in the work\cite{Chen}
\begin{equation} \label{eq:dist_end_system}
	\dot x_i =  (\mu_i + (1- \mu_i)x_i)u_i + \sum_{\send} \epsilon_{ij} x_j\,,
\end{equation}
where a number of $N=2$ agents are considered which are coupled bi-directionally resulting in $\mathcal{N}_1 = \{2\}$ and $\mathcal{N}_2 = \{1\}$. The controls are constrained to the set $u_i \in [-2,\,2]$ for all $ \agents$.
Again the quadratic functions \eqref{eq:quadratic_costs} are chosen with the weighting matrices $ \vm Q = \mathrm{diag}(10,10)$ and $ \vm R = \mathrm{diag}(1,1)$. For now set $ \epsilon = \epsilon_{12} = \epsilon_{21} = 2$ and $\mu_1 = \mu_2 = \mu = 0.5$. The optimization problem~\eqref{eq:min_distCost} is solved with $\gamma = 1.1$ with and without the structural constraint \eqref{eq:distCost_structure} to obtain a separable $V(\vm x)= \vm x\trans \vm P_d \vm x$ and non-separable terminal cost function $V(\vm x)= \vm x\trans \vm P_c \vm x$. Consequently, the invariant end region $\Omega_{\beta} = \{ \vm x\trans \vm P_{c/d} \vm x \leq 0.9\}$ is obtained for both terminal cost functions since the dynamics are identical for both agents. The left plot of Figure \ref{fig:dist_end} visualizes the two terminal regions. Clearly, the size of $\Omega_{\beta}$ is reduced in the distributed setting and the separable terminal region is contained within the non-separable end region. The right plot shows the change in area of the terminal region which is proportional to $\det(\vm P^{-1})$ with respect to an increasing coupling strength $\epsilon$ for different $\mu$. The intuitive result that the separable terminal region is reduced compared to the non-separable one with an increase in coupling strength is visible. In addition, an increased nonlinearity (for $\mu = 1$ system \eqref{eq:dist_end_system} is linear) worsens this effect and reduces the terminal region even further. 
\begin{figure}[b]
	\centering
	\includegraphics{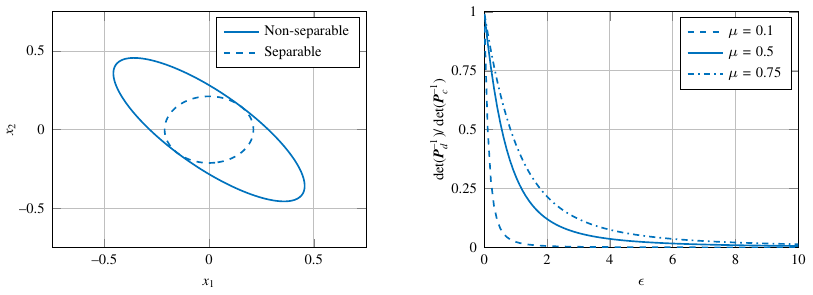}
	\caption{Comparison of the end regions following from separable and non-separable terminal cost functions.} \label{fig:dist_end} 
\end{figure}
The unstable system \eqref{eq:dist_end_system} is now stabilized via the DMPC control scheme for different initial conditions. The coupling strengths are set to $\epsilon_{12} =0.5$ and $\epsilon_{21} = 2.0$, $\mu_1 = 1.0$ and $\mu_2 = 0.5$, the prediction horizon to $T = 0.5\, \unit{\second}$ and the sampling to $\Delta t = 0.05\,\unit{\second}$. The terminal cost is again designed via the procedure of Section \ref{subsec:DMPC_stab_opt} to obtain the end region  $\Omega_{\beta} = \{\vm x\trans \vm P \vm x \leq 1.05\}$. Figure~\ref{fig:dist_end_traj} shows the optimal predictions obtained by solving OCP \eqref{eq:central_ocp}, the suboptimal prediction of Algorithm \ref{alg:SENSI_org} after $q=1$ iteration in which OCP \eqref{eq:local_ocp} is solved once and the resulting closed-loop trajectory of the suboptimal DMPC controller with $q_{\mathrm{max}} = 1$ as well as the terminal region $\Omega_{\beta}$. All initial conditions where chosen to be located within the region of attraction $\Gamma_{\alpha}$ since all endpoints of the predicted trajectories reach the terminal region, i.e. $\vm x\inds(T; \vm x_0) \in \Omega_\beta$. Thus, with the chosen initial conditions the optimal MPC controller is guaranteed to be stable according to Theorem \ref{th:centralMPC}. This, however, is not necessarily the case for the suboptimal distributed controller.

Note the trajectories for the upper left initial condition at $ \vm x_0 = [-1.3\, 1.4]\trans$. They visualize the difference between the original domain of attraction $\Gamma_{\alpha}$ and the reduced domain of attraction $\Gamma_{\bar \alpha}$ as the next sampling point of the suboptimal closed-loop trajectory $\vm x_c(\Delta t;  \munderbar{\vm x}_0)$ does not lie within the region of attraction $\Gamma_{\alpha}$ such that Theorems \ref{th:sub_MPC} and \ref{th:sub_MPC_num_iter} hold. This can be seen from the fact that the endpoint of the optimal predicted trajectory at the next sampling point $\munderbar{\vm x}_1 = \vm x_c(\Delta t;  \munderbar{\vm x}_0)$ does not lie within the terminal region, i.e. $ \vm x\inds(T; \munderbar{\vm x}_1) \notin \Omega_{\beta}$. 

Thus, the DMPC controller with this number of algorithm iterations is not guaranteed to be stable for this particular initial condition. This could be circumvented by either performing more iterations or choosing an initial condition closer to the origin such that it is located within $\Gamma_{\bar \alpha}$. However, the scheme is still robust enough to stabilize the system in this particular case. Moreover, the dependency on the sampling time can also be seen in this particular trajectory as a higher sampling time would take the next sampling point $\vm x_c(\Delta t;  \munderbar{\vm x}_0)$ farther away from the region of attraction and thus possibly destabilize the scheme.
\begin{figure}[tb]
	\centering
	\includegraphics{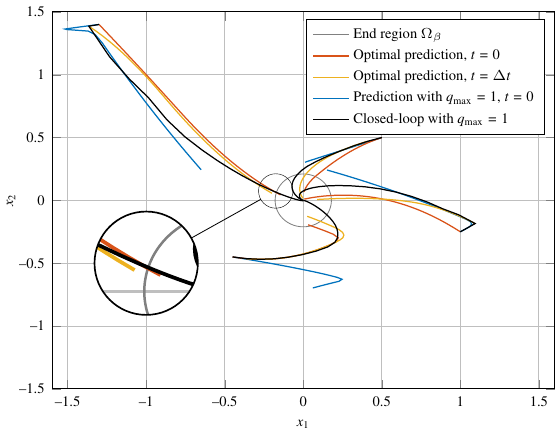}
	\caption{Predicted and closed-loop trajectories of the controlled system \eqref{eq:dist_end_system} for different initial conditions.}\label{fig:dist_end_traj}
\end{figure}

%
\section{Conclusion}
\label{sec:concl}
This paper presented a DMPC scheme for continuous-time nonlinear systems which relies on sensitivities to cooperatively solve the underlying OCP in each MPC sampling step in a distributed fashion. The agents are dynamically coupled in a neighbor-affine form which has the consequence that the sensitivities can be evaluated locally. The algorithm only requires local computations and one neighbor-to-neighbor communication step per iteration and thus constitutes a scalable and fully distributed DMPC scheme. Furthermore, it was shown that linear convergence can be guaranteed as long as the prediction horizon is sufficiently small which represents a compromise between the domain of attraction and convergence speed of the algorithm. The computation and communication load is limited by either a contracting stopping criterion or a fixed number of algorithm iterations. For both scenarios exponential stability of the DMPC controller is shown. Numerical evaluations have demonstrated the effectiveness of the presented scheme as only a few iterations per MPC step are necessary to achieve a nearly optimal performance.

Future research concerns the experimental validation of the presented DMPC scheme. In addition, convergence of the algorithm will be analyzed in the presence of packet loss and in the asynchronous setting to further reduce the execution time. 
%
%
%
%
\bibliography{Pierer_24.bib}

\appendix
\bmsection{Computation of Sensitivities}
\label{appA:sens_calc}
In order to derive the first-order sensitivities for OCP \eqref{eq:local_ocp}, the OCP of an individual neighbor $\rec$ of the central OCP \eqref{eq:central_ocp} from the perspective of the agent $\agents$ is considered
\begin{subequations}\label{eq:deriv_ocp}
	\begin{align}
		\min_{ \vm u_{j}} \quad & J_j(\vm x_j, \vm u_j, \Nj{x};\initstate) := V_j(\vm x_j(T)) + \int_0^T  l_j( \vm x_j, \vm u_j, \Nj{x})\, \dd \tau
		\\  ~\stt \quad&\vm{\dot x}_j = \vm f_j(\vm x_j, \vm u_j,\Nj{x})\,, 
		\quad \vm x_j(0) =\vm {\munderbar x}_{j,k}\\
		&\vm u_j(\tau) \in \mathbb{U}_j\,, \quad \timetau\,.
	\end{align}
\end{subequations}
Now the first-order sensitivity is formulated as the Gâteaux derivative of OCP \eqref{eq:deriv_ocp}
\begin{equation} \label{eq:Gateax_deriv}
	\delta J_j(\vm x_j\indqp, \vm u_j \indqp, \Nj{x}\indqp;\initstate)(\delta \vm x_i) =\frac{\mathrm{d}J_j(\vm x_j\indqp, \vm u_j\indqp, \Nj{x}\indqp + \epsilon \delta \vm x_{ji};\initstate)}{\mathrm{d}\epsilon}\bigg|_{\epsilon = 0}
\end{equation}
at some point $(\vm x_j\indqp, \vm u_j \indqp, \Nj{x}\indqp)$ w.r.t. the admissible direction $\delta \vm x_{ji} = [\vm 0\trans \dots \vm 0\trans \, \delta \vm x_i\trans\, \vm 0\trans \dots \vm 0\trans]\trans \in \mathds{R}^{p_j}$ where $\delta \vm x_i=\vm x_i - \vm x_i\indqp$ shows up at the position in $\delta \vm x_{ji}$ corresponding to $\vm x_i$ in $\Nj{x}$. 
Adjoining the dynamics to the
cost via (time-dependent) Lagrange multipliers $\vm \lambda_j
\in \mathds{R}^{n_j}$ results in
\begin{align} 
	\delta J_j(\vm x_j\indqp, \vm u_j\indqp, \Nj{x}\indqp;\initstate )(\delta \vm{x}_i) &=\int_0^T \frac{\mathrm{d}}{\mathrm{d} \epsilon}l_j(\vm x_j\indqp, \vm	u_j\indqp, \Nj{x}\indqp +  \epsilon \delta \vm{x}_{ji}) +\frac{\mathrm{d}}{\mathrm{d}\epsilon} \big((\vm \lambda_j\indqp)\trans (\,\vm f_j(\vm x_j\indqp, \vm u_j\indqp, \Nj{x}\indqp + \epsilon \delta \vm{x}_{ji}) - \vm{\dot{x}}_j\indqp)\big)\, \dd t\,\bigg|_{\epsilon = 0}  \nonumber \\
	& = \int_0^T\partial_{\vm {x}_{ji}} \big(l_j(\vm x_j\indqp, \vm u_j\indqp, \Nj{x}\indqp) +\vm f_j(\vm x_j\indqp, \vm u_j\indqp, \Nj{x}\indqp)\trans \vm \lambda_j\indqp\big)\trans\,\delta \vm{x}_{ji}\, \dd \tau \label{eq:Gateaux_deriv}.
\end{align} 
Considering the particular formulation of dynamics
\eqref{eq:agent_dyn} and cost terms \eqref{eq:agent_cost} in neighbor-affine form, the
Gâteaux derivative further simplifies to  
\begin{align} \label{eq:def_Gateaux_x}
	\delta J_j(\vm x_j\indqp, \vm u_j\indqp, \Nj{x}\indqp;\initstate)(\delta \vm x_i) = \int_0^T \big(\partialx l_{ji}(\vm x_j\indqp, \vm x_i\indqp) + \partialx\,\vm f_{ji}(\vm x_j\indqp, \vm x_i\indqp)\trans \vm \lambda_j\indqp\big)\trans\,\delta \vm x_i \, \dd \tau.
\end{align}
In OCP \eqref{eq:local_ocp}, the sensitivities are calculated recursively for the already augmented cost functional. Recalling the definition of $\delta \vm x_i= \vm x_i - \vm x_i\indqp$, it is clear that \eqref{eq:def_Gateaux_x} is linear w.r.t. $\vm x_i$ and can be incorporated as a linear term into the local cost function term $l_{ii}(\vm x_i, \vm u_i)$ in
\eqref{eq:agent_cost}. Thus, a repeated application of the Gâteaux derivative \eqref{eq:Gateax_deriv} still results in \eqref{eq:def_Gateaux_x}, i.e. the sensitivity \eqref{eq:recursive_Gateaux} $\delta \bar J_j(\vm u_j\indqp;\initstate )$ of the augmented cost function in \eqref{eq:local_costFunction}.
\bmsection{Proof of Lemma 1}
\begin{proof}
	The following considerations require several Lipschitz estimates. Based on the continuous differentiability of the dynamics \eqref{eq:pred_traj}, there exists a finite (local) Lipschitz constant $L_f<\infty$ for some $r_x>0$ such that
	\begin{align} \label{eq:Lipschitzf}
		\|\vm{ \hat f}(\vm x, \vm u,\vm{\hat x}) - \vm{ \hat f}(\vm y, \vm v,\vm{\hat y})\|&\leq L_f(\| \vm x-\vm y\| + \| \vm u-\vm v\| + \|\vm{\hat x}- \vm{\hat y}\|) 
	\end{align}
	for all $\vm x, \vm y,\vm{\hat x},\vm{\hat y} \in \Gamma_{\alpha}^{r_x}$ and $\vm u, \vm v \in \mathbb{U}$, where $\Gamma_{\alpha}^{r_x} :=  \bigcup_{\vm x_k \in \Gamma_{\alpha}} \mathcal{B}(\vm x_k, r_x)$ is the $r_x$-neighborhood to the (compact) domain of attraction~$\Gamma_{\alpha}$.  Similarly, consider the stacked adjoint dynamics \eqref{eq:adjoint_dyn} in each iteration $q$ of Algorithm \ref{alg:SENSI_org}, i.e.
	\begin{equation}\label{eq:iter_adjoint_dyn}
		\vm{ \dot \lambda}\indq(\tau) = -\partial_{\vm x} \sum_{\agents}	H_i(\vm x_i, \vm u_i, \vm \lambda_i)=: \vm G_d(\vm x\indq, \vm u\indq, \vm \lambda\indq; \vm{\bar x}\indqp,\vm{\bar \lambda}\indqp) \,, \quad  \vm \lambda\indq(T) = \partial_{\vm x} V(\vm x\indq(T))\,,
	\end{equation}
	where the notation $\vm {\bar x}= [\vm x_i\trans\,\vm x_{\mathcal{N}_i}\trans]_{\agents} \in \mathds{R}^{p_x}$, $ p_x = \sum_{\agents}(n_i+ \sum_{\neighs} n_j)$ and  $\vm {\bar \lambda} = [\vm \lambda_{\mathcal{N}_i^\rightarrow}]_{\agents} \in \mathds{R}^{p_{\lambda}}$, $p_{\lambda} = \sum_{\agents} \sum_{\rec} n_j$ explicitly captures the dependency on the trajectories of iteration $q-1$. Due to the assumed differentiability of all dynamics and cost functions, there exists a finite Lipschitz constant $L_G<\infty$ for some $r_\lambda>0$ such that
	\begin{align}
		\|\vm G_d(\vm x, \vm u, \vm \lambda; \vm{\bar x}, \vm{\bar \lambda}) - \vm G_d(\vm y, \vm v, \vm \mu;  \vm{\bar y}, \vm{\bar \mu})\|&\leq L_G(\| \vm x-\vm y\| + \| \vm u-\vm v\| + \| \vm \lambda - \vm \mu\| +\|\vm{\bar x} -\vm{\bar y}\| + \|\vm{\bar \lambda} -\vm{\bar \mu}\|)  \label{eq:LipschitzG}
	\end{align}
	for all for all $\vm x, \vm y,\vm{\bar x},\vm{\bar y} \in \Gamma_{\alpha}^{r_x}$ and $\vm u, \vm v \in \mathbb{U}$ as well as $\vm \lambda, \vm \mu,\vm{\bar \lambda},\vm{\bar \mu} \in \mathcal{S}^{r_\lambda}$, where and $\mathcal{S}^{r_\lambda}$ is the $r_\lambda$-neighborhood to the compact set $\mathcal{S} = \{\partial_{\vm x} V(\vm x)| \vm x \in \Gamma_{\alpha}^{r_x}\}$. 
	
	At first it is shown that the iterates are bounded for each $\initstate \in \Gamma_{\alpha}$ for a sufficiently short prediction horizon $T$, i.e. $ \vm x\indq(\tau) \in \Gamma_{\alpha}^{r_x}$ and  $\vm \lambda\indq(\tau) \in \mathcal{S}^{r_\lambda}$, $\timetau$ for $q=1,2,\dots,q_{\mathrm{max}}$. We proceed by induction. To this end, assume that $\vm x\indqp(\tau)\in \Gamma_{\alpha}^{r_x}$ and $\vm \lambda\indqp(\tau) \in \mathcal{S}^{r_\lambda}$, $\timetau$ and consider the integral form of the dynamics \eqref{eq:pred_traj} for $\initstate \in \Gamma_{\alpha}$. By adding and subtracting $\vm{ \hat f}(\initstate, \vm 0, \vm{\munderbar{\hat x}}_k)$, with $\vm{\munderbar{\hat x}}_k =[[\vm{\munderbar{x}}_{j,k}]_{\send}]_{\agents}$ as well as using the Lipschitz property \eqref{eq:Lipschitzf}, we get (omitting time arguments)
	\begin{align} 
		\|\vm x\indq(\tau;\initstate)- \initstate\| &\leq \int_0^\tau \|\vm{ \hat f}(\vm x\indq, \vm u\indq,\vm {\hat x}\indqp) - \vm{ \hat f}(\initstate, \vm 0, \vm{\munderbar{\hat x}}_k) + \vm{ \hat f}(\initstate, \vm 0, \vm{\munderbar{\hat x}}_k)\|\, \dd s \nonumber\\
		&\leq  \int_0^\tau L_f(\| \vm x\indq - \initstate \| + \| \vm u\indq \| + \| \vm {\hat x}\indqp- \vm{\munderbar{\hat x}}_k\|) + \|\vm{ \hat f}(\initstate, \vm 0, \vm{\munderbar{\hat x}}_k)\| \,\dd s\,. \label{eq:bound_x}
	\end{align}
	The norm $\| \hat{\vm x}\indqp -\vm{\munderbar{\hat x}}_k\|$ in \eqref{eq:bound_x} at time $s$ can be bounded further by realizing that $\vm {\hat x}\indqp(s)$, $ \vm{\munderbar{\hat x}}_k(s) \in \mathds{R}^p$ exclusively consist of elements of $\vm {x}\indqp$ and  $\initstate$
	\begin{align}\label{eq:bound_init_hat}
		\| \hat{\vm x}\indqp -\vm{\munderbar{\hat x}}_k\| \leq \sqrt{ p} \|\hat{\vm x}\indqp -\vm{\munderbar{\hat x}}_k\|_\infty = \sqrt{ p} \| \vm {x}\indqp - \initstate\|_\infty \leq \sqrt{ p} \|\vm {x}\indqp - \initstate\|\,.
	\end{align}
	By Gronwall’s inequality, the bound $ \| \vm u\indq(\tau)\| \leq r_u:=\max_{\vm u \in \mathbb{U}} \| \vm u\|<\infty$,  and the fact that $\|\vm{ \hat f}(\initstate, \vm 0, \vm{\munderbar{\hat x}}_k)\|\leq h_f < \infty $ due to the continuity of $\vm{ \hat f}$, it follows that
	\begin{align}
		\|\vm x\indq(\tau;\initstate)- \initstate\| &\leq \tau \mathrm{e}^{L_f\tau}( L_f(r_u + \sqrt{p} r_x) + h_f)\,, \quad \timetau
	\end{align}
	Thus, by choosing $T<T_f$ with $T_f$ satisfying $T_f \mathrm{e}^{L_f T_f}( L_f(r_u + \sqrt{p} r_x) + h_f) = r_x$, the state trajectories are bounded, i.e. $\vm x_i\indq(\tau) \in \Gamma_{\alpha}^{r_{x}}$, $\timetau$.
	
	The boundedness of $\vm \lambda\indq(\tau)$ can be shown in similar fashion by considering the integral form of \eqref{eq:iter_adjoint_dyn} in reverse time and the notation $y_r(\tau) := y(T - \tau) $ for some trajectory $y(\tau),\, [0,\, T_x]$. By adding and subtracting $\vm G_d(\initstate, \vm 0, \vm \lambda\indq(T); \vm{\munderbar{\bar x}}_k,\vm{{\bar \lambda}}\indq(T))$, utilizing the Lipschitz property \eqref{eq:LipschitzG} and the fact that $\vm x\indq(\tau) \in \Gamma_{\alpha}^{r_x}$, one gets
	\begin{align}
		\| \vm \lambda_r\indq(\tau; \initstate) - \vm \lambda_r\indq(0; \initstate)\| \leq \int_0^\tau \|\vm G_d(\vm x_r\indq, \vm u_r\indq, \vm \lambda\indq; \vm{\bar x}_r\indqp,\vm{\bar \lambda}_r\indqp)\|\, \dd s \leq \tau \mathrm{e}^{L_G\tau}(L_G(r_x + r_u + r_x\sqrt{p_x} + r_x\sqrt{p_\lambda}) + h_G)
	\end{align}
	where the bounds similar to \eqref{eq:bound_init_hat}
	\begin{align}
		\| \vm {\bar x}\indqp - \vm{\munderbar{\bar x}}_k\| \leq \sqrt{ p_x} \| \vm {x}\indqp -\vm{\munderbar{x}}_k\|\,, \quad \| \vm {\bar \lambda}\indqp -\vm{{\bar \lambda}}\indq(T)\| \leq \sqrt{ p_\lambda} \| \vm {\lambda}\indqp -\vm{{\lambda}}\indq(T)\|\,,
	\end{align}
	and $\|\vm G_d(\initstate, \vm 0, \vm \lambda\indq(T); \vm{\munderbar{\bar x}}_k,\vm{{\bar \lambda}}(T)\| \leq h_G$ were used. Thus, by choosing $T < \min\{T_x,T_\lambda\}$ with $T_\lambda$ satisfying $T_\lambda\mathrm{e}^{L_GT_\lambda}(L_G(r_x + r_u + r_x\sqrt{p_x} + r_x\sqrt{p_\lambda}) + h_G) =r_\lambda$, the adjoint state trajectory is bounded, i.e. $\vm \lambda\indq(\tau) \in \mathcal{S}^{r_\lambda}$. Furthermore, note that for $q=0$ either $\vm x^0(\tau;\vm{\munderbar x}_0) = \vm x_0 \in \Gamma_\alpha^{r_x}$, $\vm \lambda^0(\tau)= \vm \lambda_T \in \mathcal{S}^{r_\lambda}$ in the first MPC step $k=0$ or for $k>0$, $\vm x^0(\tau;\vm{\munderbar x}_k) = \vm x^{q_k}(\tau;\vm{\munderbar x}_{k-1}) \in \Gamma_\alpha^{r_x} $ and  $\vm \lambda^0(\tau;\vm{\munderbar x}_k) = \vm \lambda^{q_k}(\tau;\vm{\munderbar x}_{k-1}) \in \mathcal{S}^{r_\lambda}$ by \eqref{eq:warm_start}.
	This shows that for $T < \min\{T_x,T_\lambda\}$ all iterates stay within their respective sets $\Gamma_{\alpha}^{r_x}$ and $\mathcal{S}^{r_\lambda}$ in each MPC step and are therefore bounded.
	
	Next, the boundedness of the iterates is used to establish linear convergence of Algorithm \ref{alg:SENSI_org}. To this end, the difference between the optimal solution and the solution in each step $q$ of Algorithm \ref{alg:SENSI_org} needs to be characterized. To this end, define the errors $\Delta \vm x\indq(\tau; \initstate) := \vm x\inds(\tau; \initstate) - \vm x\indq(\tau; \initstate)$, $\Delta \vm u\indq(\tau; \initstate) := \vm u\inds(\tau; \initstate) - \vm u\indq(\tau; \initstate)$ and $\Delta \vm \lambda\indq(\tau; \initstate) := \vm \lambda\inds(\tau; \initstate) - \vm \lambda\indq(\tau; \initstate)$, $\timetau$. We derive a bound on $\Delta \vm x\indq(\tau;\initstate)$ by considering the difference of the integral form of the dynamics \eqref{eq:pred_traj} and \eqref{eq:opt_traj} at some step $q$ 
	\begin{align} \label{eq:bound_Delta_x}
		\|\Delta \vm x\indq(\tau; \initstate) \| &\leq \int_0^\tau \| \vm{ \hat f}(\vm x\inds, \vm u\inds, \hat{\vm x}\inds) - \vm{ \hat f}(\vm x\indq, \vm u\indq, \vm {\hat x}\indqp)\|\, \dd s \leq  L_f \int_0^\tau \|\Delta \vm x\indq\| + \|\Delta \vm u\indq \| + \| \hat{\vm x}\inds - \hat{\vm x}\indqp \|\, \dd s 
	\end{align}
	where again the notation $\vm{\hat x} = [\Ni{x}]_{\agents} \in \mathds{R}^{p}$ is used to achieve structural equivalence between $\vm f(\vm  x\inds, \vm u \inds)$ and $\vm{ \hat f}(\vm x\indq, \vm u\indq, \vm {\hat x}\indq)$.
	
	Note that by assumption any solution of the differentiable dynamics \eqref{eq:centr_dyn} is bounded for bounded controls, i.e. there exists a compact set $\mathbb{X} \subset \mathds 
	R^n$ such that  $\vm x\inds(\tau) \in \mathbb{X}$, $\timetau$ for all $ \vm u(t) \in \mathbb{U}$ and $\initstate \in \Gamma_{\alpha}$. In addition, by choosing $T < \min\{T_x,T_\lambda\}$ the boundedness of any solution of \eqref{eq:pred_traj} at a given iteration $q$ is ensured, i.e.  $\vm x\indq(\tau), \vm x\indqp(\tau)\in \Gamma_\alpha^{r_x}$ such that the Lipschitz estimate \eqref{eq:Lipschitzf} is applicable.
	The norm $\|\Delta \vm u\indq \| = \|\Delta \vm u\indq(\tau; \initstate)\|$ in \eqref{eq:bound_Delta_x}, $\timetau$ is expressed by the feedback laws \eqref{eq:MPC_control_law} and \eqref{eq:DMPC_control_law} 
	\begin{align}
		\int_0^\tau \|\Delta \vm u\indq(s; \initstate)\|\, \dd s = \!\!\int_0^\tau\|\vm \kappa(\vm x\inds(s; \initstate); \vm x\inds,\vm \lambda\inds, \initstate ) - \vm \kappa(\vm x\indq(s; \initstate); \vm x\indqp,\vm \lambda\indqp, \initstate )\|\, \dd s \leq L_\kappa \!\!\int_0^\tau \|\Delta \vm x\indq\| +  \| \Delta \vm x \indqp\| + \|\Delta \vm \lambda\indqp\|\, \dd s \label{eq:bound_Delta_u}
	\end{align}
	where the relation $\vm \kappa\inds(\vm x^*(\tau;\initstate ); \initstate) =  \vm \kappa(\vm x\inds(\tau; \initstate); \vm x\inds,\vm \lambda\inds, \initstate )$ is utilized. Note that the states $(\vm x\inds, \vm x\indq,\vm x \indqp)$ and adjoint states $(\vm \lambda\inds, \vm \lambda\indq,\vm \lambda \indqp)$ are defined on compact sets. Hence, the Lipschitz property of the feedback law in Assumption \ref{ass:cost_Lipschitz} implies the existence of a finite Lipschitz constant $0<L_\kappa<\infty$ in the second line of \eqref{eq:bound_Delta_u}. Similar to \eqref{eq:bound_init_hat}, the norm $\| \hat{\vm x}\inds - \hat{\vm x}\indqp \|$ is bounded by \begin{align} \label{eq:bound_opt_hat}
		\| \hat{\vm x}\inds - \hat{\vm x}\indqp \| \leq \sqrt{p}\| \vm x\inds- \vm x\indqp\|\,.
	\end{align}
	Inserting \eqref{eq:bound_opt_hat} and \eqref{eq:bound_Delta_u} into \eqref{eq:bound_Delta_x}, leads to
	\begin{align}
		\|\Delta \vm x\indq(\tau; \initstate)\| &\leq L_f\int_0^\tau (1+L_\kappa) \|\Delta \vm x \indq\|  + (\sqrt{p} + L_\kappa) \|\Delta \vm x\indqp\| + L_\kappa \| \Delta \vm \lambda\indqp\|\, \dd s\,.
	\end{align}
	Using Gronwall's inequality and taking the $L_\infty$-norm on both sides, one obtains
	\begin{align} \label{eq:conv_x_short}
		\|\Delta \vm x\indq(\cdot; \initstate)\|_{L_\infty} \leq C_1 \| \Delta \vm x\indqp\|_{L_\infty} + C_2 \| \Delta \vm \lambda\indqp\|_{L_\infty}
	\end{align}
	with $C_1:= L_fT(\sqrt{p} + L_\kappa)\mathrm{e}^{L_f(1+L_\kappa)T}>0$ and $C_2:= L_fL_\kappa T\mathrm{e}^{L_f(1+L_\kappa)T}>0$. 
	
	Next, a bound for $\Delta \vm \lambda\indq(\tau; \initstate)$ needs to be found. To this end, consider the optimal adjoint dynamics \eqref{eq:opt_adjointdyn} of the central OCP \eqref{eq:central_ocp} and the adjoint dynamics \eqref{eq:iter_adjoint_dyn} of the local OCP \eqref{eq:local_ocp}. The functions $\vm G$ and $ \vm G_d$ are structurally equivalent as the sensitivities extend the local cost functions \eqref{eq:local_costFunction} such that the equations \eqref{eq:opt_adjointdyn} and \eqref{eq:iter_adjoint_dyn} involve the same terms.\cite{Pierer}
	Consider the integral form of \eqref{eq:opt_adjointdyn} and \eqref{eq:iter_adjoint_dyn} in reverse time and the notation $y_r(\tau) := y(T - \tau) $ for some trajectory $y(\tau),\, \timetau$
	\begin{align} 
		\| \Delta \vm{\lambda}_r\indq(\tau; \initstate)\| &\leq L_V \|\Delta \vm{x}_r\indq(0)\| + \int_0^\tau \| \vm G_d(\vm{x}_r\inds, \vm{u}_r\inds, \vm{\lambda}_r\inds; \vm{\bar{x}}_r\inds, \vm{\bar{\lambda}}_r\inds) -\vm G_d(\vm x\indq_r, \vm u\indq_r, \vm \lambda\indq_r; \vm{\bar x}_r\indqp,\vm{\bar \lambda}_r\indqp) \|\, \dd s \nonumber \\
		&\leq L_V\|\Delta \vm x_r\indq(0)\| + L_G \int_0^\tau \|\Delta \vm x_r\indq \| + \| \Delta \vm u_r\indq \| + \| \Delta \vm \lambda_r\indq\|+ \| \bar{\vm x}_r\inds - \bar{\vm x}_r\indqp \| + \| \bar{\vm \lambda}_r\inds - \bar{\vm \lambda}_r\indqp \| \, \dd s  \label{eq:bound_Delta_lambda}\\
		&\leq L_V\|\Delta \vm x_r\indq(0)\| + L_G \int_0^\tau (1 +L_\kappa) \|\Delta \vm x_r\indq \| + \|\Delta \vm \lambda_r \indq\|+ (\sqrt{p_x}+L_\kappa)\| \Delta \vm x_r\indqp \| +(\sqrt{p_\lambda}+L_\kappa) \| \Delta \vm \lambda_r\indqp\|) \, \dd s \nonumber
	\end{align}
	where $\vm G(\vm x\inds, \vm u\inds, \vm \lambda\inds) = \vm G_d(\vm x\inds, \vm u\inds, \vm \lambda\inds; \vm{\bar x}\inds,\vm{\bar \lambda}\inds)$ is used. 
	
	By assumption any solution of the adjoint dynamics \eqref{eq:opt_adjointdyn} yields a bounded solution for any continuous and bounded state trajectory $\vm x(\tau) \in \mathbb{X}$ and input $\vm u(\tau) \in \mathbb{U}$, $\timetau$, i.e. there exist a compact set $\mathbb{X}_\lambda \subset \mathds{R}^n$ such that $\vm \lambda(\tau) \in \mathbb{X}_\lambda$, for all $\timetau$. Thus, by choosing $T<\min\{T_x,T_\lambda\}$ all trajectories in \eqref{eq:bound_Delta_lambda} are defined within compact sets such that the Lipschitz estimate \eqref{eq:LipschitzG} is applicable. By applying Gronwall's inequality and the $L_\infty$-norm, the bound 
	\begin{equation} \label{eq:conv_lambda_short1}
		\|\Delta \vm \lambda\indq(\cdot; \initstate)\|_{L_\infty} \leq C_3 \| \Delta \vm x\indq\|_{L_\infty} + C_4\| \Delta \vm  x \indqp\|_{L_\infty} + C_5\| \Delta \vm \lambda \indqp\|_{L_\infty}
	\end{equation}
	with $C_3:= (L_V + TL_G(1+L_\kappa))\mathrm{e}^{L_G T}$ and $C_4:=TL_G(\sqrt{ p_x}+L_\kappa)\mathrm{e}^{L_G T}$, $C_5:=TL_G(\sqrt{p_{\lambda}}+L_\kappa)\mathrm{e}^{L_G T}$ is obtained. 
	Inserting \eqref{eq:conv_x_short} into \eqref{eq:conv_lambda_short1} leads to 
	\begin{equation}\label{eq:conv_lambda_short2}
		\|\Delta \vm \lambda\indq(\cdot; \initstate)\|_{L_\infty} \leq (C_3C_1 + C_4)\| \Delta \vm x\indqp\|_{L_\infty}+ (C_3 C_2 + C_5)\| \Delta \vm \lambda \indqp\|_{L_\infty}.
	\end{equation}
	Concatenating \eqref{eq:conv_x_short} and \eqref{eq:conv_lambda_short2} results in the linear discrete-time system
	\begin{equation}
		\begin{bmatrix}
			\|\Delta \vm x\indq(\cdot; \initstate)\|_{L_\infty} \\
			\|\Delta \vm \lambda\indq(\cdot; \initstate)\|_{L_\infty} \\
		\end{bmatrix} \leq \begin{bmatrix}
			C_1 & C_2 \\
			C_3C_1 + C_4 & C_3C_2 + C_5
		\end{bmatrix} \begin{bmatrix}
			\|\Delta \vm x\indqp(\cdot; \initstate)\|_{L_\infty} \\
			\|\Delta \vm \lambda\indqp(\cdot; \initstate)\|_{L_\infty}  \\
		\end{bmatrix} = \vm C \begin{bmatrix}
			\|\Delta \vm x\indqp(\cdot; \initstate)\|_{L_\infty} \\
			\|\Delta \vm \lambda\indqp(\cdot; \initstate)\|_{L_\infty}  \\
		\end{bmatrix} \,,
	\end{equation}
	where the inequality is to be understood element-wise and $\vm C\in \mathds{R}^{2\times2}$ denotes the iteration matrix. Finally, Taking the Euclidean norm $\| \cdot\|$ on both sides proves \eqref{eq:lin_conv} with $p=\|\vm C\|$. Note that $ p\rightarrow 0 $ for $T \rightarrow 0$. Hence, there exists a maximum horizon length $T_p$ such that for all $T<T_p$ the contraction ratio $p$ satisfies $p<1$. Consequently, for $T< T_{\mathrm{max}}:= \min\{T_p, T_x, T_\lambda\}$ the iterates are bounded and the algorithm converges. 
\end{proof}

\bmsection{Proof of Lemma 2}
\begin{proof}
	To prove Lemma \ref{lem:error_bound}, the error norm $\|\Delta \vm x_c\indqk(\tau; \initstate)\|$, $\timetausample$ is expanded such that
	\begin{align} \label{eq:norm_expansion}
		\|\Delta \vm x_c\indqk(\tau; \initstate)\|&= \|\vm x_c\indqk(\tau; \initstate) -\vm x\indqk(\tau; \initstate) + \vm x\indqk(\tau; \initstate)  - \vm x\inds(\tau; \initstate)\|\leq \|\vm x_c\indqk(\tau; \initstate) -\vm x\indqk(\tau; \initstate)\| + \|\vm x\indqk(\tau; \initstate)  - \vm x\inds(\tau; \initstate)\|\nonumber\\
		&=\| \vm \xi\indqk(\tau;\initstate)\| + \|\Delta \vm x\indqk(\tau; \initstate)\|
	\end{align}
	with the error between predicted and actual trajectories $\vm \xi\indqk(\tau;\initstate):= \vm x_c\indqk(\tau; \initstate) -\vm x\indqk(\tau; \initstate)$ as well as predicted and optimal trajectories $\Delta \vm x\indqk(\tau; \initstate):= \vm x\inds(\tau; \initstate) -\vm x\indqk(\tau; \initstate)$.
	We derive a bound on $\vm \xi\indqk(\tau;\initstate),\, \timetausample$ by considering the integral form of the dynamics \eqref{eq:pred_traj} and \eqref{eq:act_traj} as well as the Lipschitz property of the control in Assumption~\ref{ass:cost_Lipschitz}
	\begin{align}\label{eq:bound_xi_1}
		\| \vm \xi\indqk(\tau;\initstate)\| &\leq  \int_0^\tau  \|\vm{ \hat f}(\vm x_c\indqk, \vm u\indqk, \vm {\hat x}_c\indqk )-\vm{ \hat f}(\vm x\indqk, \vm u\indqk, \vm {\hat x}\indqkp)\|\, \dd s \leq L_f \int_0^\tau \| \vm x_c\indqk - \vm x\indqk\| + \| \vm {\hat x}_c\indqk - \vm {\hat x}\indqkp\|\, \dd s
	\end{align}
	where again $\vm{\hat x} = [\Ni{x}]_{\agents} \in \mathds{R}^{p}$ is used to achieve structural equivalence between $\vm f(\vm  x_c\indqk, \vm u \indqk)$ and $\vm{ \hat f}(\vm x\indqk, \vm u\indqk, \vm {\hat x}\indqkp)$. Note that any solution of the differentiable dynamics \eqref{eq:centr_dyn} is bounded for bounded controls, i.e. $ \vm x\inds(t),\, \vm x_c(t) \in \mathbb{X}$ for all $ \vm u(t) \in \mathbb{U}$ and $\initstate \in \Gamma_{\alpha}$. In addition, Lemma \ref{lem:lin_conv} implies the boundedness of any solution $\vm x\indqk(t)$ of \eqref{eq:pred_traj} at a given iteration $q$. Together this implies the existence of a finite Lipschitz constant $L_f>0$ in \eqref{eq:bound_xi_1}.
	Similar to \eqref{eq:bound_init_hat},  the norm $\| \vm {\hat x}_c\indqk - \vm {\hat x}\indqkp\|$ in \eqref{eq:bound_xi_1} is bounded by 
	\begin{align}\label{eq:bound_hat}
		\| \vm {\hat x}_c\indqk - \vm {\hat x}\indqkp\| \leq \sqrt{ p} \| \vm {\hat x}_c\indqk - \vm {\hat x}\indqkp\|_\infty = \sqrt{ p} \| \vm {x}_c\indqk - \vm {x}\indqkp\|_\infty \leq \sqrt{ p} \| \vm {x}_c\indqk - \vm {x}\indqkp\|\,. 
	\end{align}
	This norm can further be expanded to
	\begin{align}\label{eq:bound_hat_2}
		\| \vm {x}_c\indqk - \vm {x}\indqkp\| = \| \vm {x}_c\indqk -\vm {x}\indqk + \vm {x}\indqk- \vm {x}\indqkp\| \leq \| \vm {x}_c\indqk -\vm {x}\indqk\| + \|\vm {x}\indqk- \vm {x}\indqkp\|
	\end{align}
	such that \eqref{eq:bound_hat} is compatible with \eqref{eq:bound_xi_1}. Inserting \eqref{eq:bound_hat_2} into \eqref{eq:bound_xi_1}, results in
	\begin{align}
		\| \vm \xi\indqk(\tau;\initstate)\| &\leq L_f  \int_0^\tau  (1+ \sqrt{p})\| \vm \xi\indqk\| +  \sqrt{p} \|\vm {x}\indqk- \vm {x}\indqkp\|\,\dd s.
	\end{align}
	Applying Gronwall's inequality and the $L_\infty$ norm, eventually leads to
	\begin{align}\label{eq:bound_xi_2}
		\| \vm \xi\indqk(\cdot; \initstate)\|_{L_\infty} \leq  L_f\sqrt{p}\Delta t\mathrm{e}^{L_f(1+\sqrt{ p})\Delta t}\|\vm {x}\indqk(\cdot; \initstate)- \vm {x}\indqkp(\cdot; \initstate)\|_{L_\infty} \leq c_3 d  \| \initstate\|
	\end{align}
	with the constant $c_3:= N L_f\sqrt{p}\Delta t\mathrm{e}^{L_f(1+\sqrt{ p})\Delta t} >0$ where the stopping criterion \eqref{eq:stopp_crit} was used to bound the norm between two state iterates in \eqref{eq:bound_xi_2}. In what follows, a bound on the remaining unknown norm $ \|\Delta \vm x\indqk(\tau; \initstate)\|$ is derived in similar fashion by considering the integral form of the dynamics \eqref{eq:pred_traj} and \eqref{eq:opt_traj} for $\timetausample$
	\begin{align}\label{eq:bound_delta_x_1}
		\|\Delta \vm x\indqk(\tau; \initstate)\| &\leq \int_0^\tau \| \vm{ \hat f}(\vm x\inds, \vm u\inds, \vm{\hat{x}}\inds) - \vm{ \hat f}(\vm x\indqk, \vm u\indqk, \vm{\hat{x}}\indqkp)\|\, \dd s \leq L_f  \int_0^\tau \|\vm x\inds - \vm x\indqk\| + \|\vm u\inds - \vm u \indqk\| + \|\vm{\hat{x}}\inds - \vm{\hat{x}}\indqkp\|\, \dd s
	\end{align}
	with the unknown norms $ \|\vm u\inds - \vm u \indqk\|$ and $ \|\vm{\hat{x}}\inds - \vm{\hat{x}}\indqkp\|$. Similar to \eqref{eq:bound_hat}, the norm $\|\vm{\hat{x}}\inds - \vm{\hat{x}}\indqkp\|$ is bounded by
	\begin{align}\label{eq:bound_hat_3}
		\|\vm{\hat{x}}\inds - \vm{\hat{x}}\indqkp\| \leq  \sqrt{p}(\|\vm x\inds - \vm x\indqk\| +  \|\vm {x}\indqk- \vm {x}\indqkp\|). 
	\end{align}
	which can be inserted into \eqref{eq:bound_delta_x_1}
	\begin{align} \label{eq:bound_delta_x_3}
		\|\Delta \vm x\indqk(\tau; \initstate)\| &\leq   \int_0^\tau L_f (1+ \sqrt{p}) \|\Delta \vm x\indqk\|  +L_f\|\vm u\inds - \vm u \indqk\|+ L_f\sqrt{p}  \|\vm x\indqk - \vm x\indqkp\|\, \dd s \nonumber \\
		&\leq N L_f \sqrt{p} \tau  d\| \initstate\| +  \int_0^\tau L_f (1+ \sqrt{p}) \|\Delta \vm x\indqk\|  +L_f\|\vm u\inds - \vm u \indqk\|\, \dd s
	\end{align}
	where again the stopping criterion \eqref{eq:stopp_crit} was used to bound $\|\vm x\indqk - \vm x\indqkp\|$ in \eqref{eq:bound_delta_x_3}.
	The difference between optimal $\vm u^*$ and suboptimal input $\vm u\indqk$ can be bounded as follows 
	\begin{align}\label{eq:bound_optimal_input}
		&\int_0^\tau \| \vm u\inds(s;\initstate) - \vm u\indqk(s;\initstate) \|\, \dd s = \int_0^\tau\|\vm \kappa\inds(\vm x\inds(s; \initstate); \vm x\inds,\vm \lambda\inds, \initstate ) - \vm \kappa(\vm x\indqk(s; \initstate); \vm x\indqkp,\vm \lambda\indqkp, \initstate )\|\, \dd s \nonumber\\
		&\leq L_\kappa \int_0^\tau \|\vm x\inds - \vm x\indqk\| +  \|\vm x\inds - \vm x\indqkp\| + \|\vm \lambda\inds - \vm \lambda\indqkp\|\, \dd s\nonumber \leq L_\kappa \int_0^\tau \|\vm x\inds - \vm x\indqk\|\, \dd s+ \sqrt{2}L_\kappa \tau \bigg\| \begin{matrix}
			\vm x\inds - \vm x\indqkp\\ \vm \lambda\inds - \vm \lambda\indqkp
		\end{matrix} \bigg \|_{L_\infty} \nonumber  \\
		&\leq L_\kappa \int_0^\tau \|\vm x\inds - \vm x\indqk\|\, \dd s+ \frac{\sqrt{2}L_\kappa \tau}{1-p}\bigg\| \begin{matrix}
			\vm x\indqk - \vm x\indqkp\\ \vm \lambda\indqk - \vm \lambda\indqkp
		\end{matrix} \bigg \|_{L_\infty} \leq L_\kappa \int_0^\tau \|\Delta \vm x\indqk\| \, \dd s + \frac{\sqrt{2}L_\kappa \tau}{1-p} N d \| \initstate\|
	\end{align}
	where $  \vm \kappa\inds(\vm x\inds(\tau, \initstate); \initstate) = \vm \kappa(\vm x\inds(\tau; \initstate); \vm x\inds,\vm \lambda\inds, \initstate ) $ is used. Note that  the states $ \vm x\inds$, $\vm x\indqk$ and $\vm x\indqkp$ are defined within a compact set. The same holds for $\vm \lambda\inds$ and $ \vm \lambda\indqkp$. Consequently, Assumption \ref{ass:cost_Lipschitz} implies that there exists a Lipschitz constant $L_{\kappa}<\infty$ such that first inequality in \eqref{eq:bound_optimal_input} holds. Furthermore, the linear convergence property \eqref{eq:lin_conv} in Lemma \ref{lem:lin_conv} in combination with the stopping criterion \eqref{eq:stopp_crit} was used in the last two inequalities in \eqref{eq:bound_optimal_input}. 
	Combining \eqref{eq:bound_optimal_input} and \eqref{eq:bound_hat_3} with \eqref{eq:bound_delta_x_1} leads to
	\begin{align}
		\|\Delta \vm x\indqk(\tau; \initstate)\| &\leq \int_0^\tau c_4\|\Delta \vm x\indqk\|\, \dd s +c_5 \tau d \| \initstate \|
	\end{align}
	with constants $c_4 := L_f (1 + \sqrt{p} + L_\kappa)>0$ and $c_5:=N L_f \big(\sqrt{p} + \frac{\sqrt{2} L_\kappa}{1 - p}\big)>0$. Applying Gronwall's inequality and the $L_\infty$ eventually leads to
	\begin{equation} \label{eq:bound_delta_x_2}
		\|\Delta \vm x\indqk(\cdot; \initstate)\|_{L_\infty} \leq c_5 \Delta t \mathrm{e}^{c_4 \Delta t} d \| \initstate\|\,.
	\end{equation}
	Finally, the bound on the error \eqref{eq:norm_expansion} is given by \eqref{eq:bound_xi_2} and \eqref{eq:bound_delta_x_2}
	\begin{equation}
		\|\Delta \vm x_c\indqk(\cdot; \initstate)\|_{L_\infty} \leq D d \| \initstate \| 
	\end{equation}
	with $D:= c_3 + c_5 \Delta t \mathrm{e}^{c_4 \Delta t}$ which proves the lemma. 
\end{proof}
\bmsection{Proof of Lemma 3}
The proof follows along the lines of \cite{Graichen,Graichen2,Bestler} and considers the bound on the actual state trajectory in the next sampling step $\Delta t$
\begin{align} \label{eq:bound_lemma3}
	\|  \vm x_c\indqk(\Delta t; \initstate)\| \leq \| \vm x\inds(\Delta t; \initstate)\| + \|\Delta \vm x\indqk_c(\Delta t; \initstate)\| \leq  (\mathrm{e}^{\hat L \Delta t } + D d ) \|\initstate\|\,, 
\end{align}
where the second line follows from \eqref{eq:lemma_2} in Lemma \ref{lem:error_bound} and the bound on the optimal state trajectory in \eqref{eq:state_upper_bound}. Moreover, note that, for any $ \vm x \in \Gamma_\alpha$ with $\alpha>0$ the bound $ \| \vm x\|< \sqrt{\frac{\alpha}{m_J}}$ follows from the set definition \eqref{eq:reg_of_attr} and the lower bound on the optimal cost \eqref{eq:lower_bound_opt_cost}. Vice versa, $\| \vm x\|< \sqrt{\frac{\alpha}{M_J}}$ implies by the upper bound \eqref{eq:upper_bound_opt_cost} that $ \vm x \in \Gamma_\alpha$. Thus, $\initstate \in \Gamma_{\bar \alpha}$ with $ \bar \alpha:= \frac{m_J \alpha}{M_J (\mathrm{e}^{\hat L \Delta t } + D d )^2}$ has the consequence that $ \| \initstate\| \leq \frac{1}{\mathrm{e}^{\hat L \Delta t } + D d} \sqrt{ \frac{\alpha}{M_J}}$ which inserted in \eqref{eq:bound_lemma3} leads to
\begin{equation}
	\|  \vm x_c\indqk(\Delta t; \initstate)\| \leq \sqrt{ \frac{\alpha}{M_J}}.
\end{equation}
This implies that $\vm x_c\indqk(\Delta t; \initstate) \in \Gamma_\alpha$ for $ \initstate \in \Gamma_{\bar \alpha}$. Now, we can estimate the difference between the optimal cost at point $ \vm {\munderbar x}_{k+1} = \vm x_c\indqk(\Delta t, \vm {\munderbar x}_k)$ and at $ \vm {\munderbar x}_{k+1}\inds = \vm x_c\inds(\Delta t; \initstate)$ (which both lie in $\Gamma_{\alpha}$) by considering the following line integral along the linear path $\vm {\munderbar x}_{k+1}\inds + s \Delta \vm {\munderbar x}_{k+1}$ with $\Delta \vm {\munderbar x}_{k+1}:= \Delta \vm x_c\indqk(\Delta t; \initstate)$ and the path parameter $s \in [0,1]$
\begin{align}
	J\inds(\vm {\munderbar x}_{k+1}) &= J\inds(\vm {\munderbar x}_{k+1}\inds) + \int_0^1 \nabla J\inds(\vm {\munderbar x}_{k+1}\inds + s \Delta \vm {\munderbar x}_{k+1} ) \Delta \vm {\munderbar x}_{k+1}\, \dd s \nonumber\\
	&= J\inds(\vm {\munderbar x}_{k+1}\inds) + \int_0^1 \nabla J\inds(\vm {\munderbar x}_{k+1}\inds) + \int_0^s \bigg[ \nabla^2 J( \vm {\munderbar x}_{k+1}\inds + s_2 \Delta \vm {\munderbar x}_{k+1} )\Delta \vm {\munderbar x}_{k+1}\, \dd s_2 \bigg] \Delta\vm {\munderbar x}_{k+1} \, \dd s \nonumber\\
	&\leq J\inds(\vm {\munderbar x}_{k+1}\inds ) + B \| \vm {\munderbar x}_{k+1}\inds \| \|\Delta \vm x_c\indqk(\Delta t; \initstate)\| + \frac{1}{2} B \|\Delta \vm x_c\indqk(\Delta t; \initstate)\|^2.
\end{align}
The last line follows from the twice differentiability of the optimal cost $J\inds(\cdot)$, see Assumption \ref{ass:cost_Lipschitz}, which implies that there exists a constant $B>0$ such that $\|\nabla J\inds(\vm x)\| \leq B \| \vm x\|$ and $\|\nabla^2 J\inds(\vm x)\|\leq B$ for all $ \vm x \in \Gamma_\alpha$. Note that by definition of $\bar{\alpha}$ the linear path lies completely within $\Gamma_\alpha$. Considering the optimal MPC case \eqref{eq:th1}, the first term $J\inds(\vm {\munderbar x}_{k+1}\inds )$ can be related to the previous optimal cost by \eqref{eq:bound_opt_integral} which results in the contraction term $J\inds(\vm {\munderbar x}_{k+1}\inds ) \leq (1-a) J\inds(\initstate\inds)$. The bounds \eqref{eq:state_upper_bound} and \eqref{eq:lower_bound_opt_cost}  give $ \| \vm {\munderbar x}_{k+1}\inds \| \leq \frac{\mathrm{e}^{\hat L \Delta t}}{\sqrt(m_J)} \sqrt{J\inds(\initstate)}$, which yields \eqref{eq:lemma_2} with $0\leq a<1$ as in \eqref{eq:bound_opt_integral}, $b:=  \frac{\mathrm{e}^{\hat L \Delta t}}{\sqrt{m_J}}$ and $c=\frac{B}{2}$. This completes the proof of Lemma~\ref{lem:cost_bound}.
\bmsection{Additional bounds}
This section states some useful bounds on the optimal state trajectory $\vm x\inds(\tau, \initstate)$, adjoint trajectory $\vm \lambda\inds(\tau; \initstate)$ and optimal cost $J\inds(\initstate)$. Similar bounds are given in the references\cite{Graichen,Graichen2,Bestler}. Note that the optimal state trajectory lies in the compact set $\Gamma_\alpha$, i.e. $ \vm x\inds(\tau; \initstate) \in \Gamma_{\alpha}$ for $\timetau$ and $ \initstate \in \Gamma_\alpha$ which follows from~\eqref{eq:th1}. Considering the equilibrium $ \vm f(\vm 0,\vm 0) = \vm 0$ as well as $\vm \kappa\inds(\vm 0; \initstate)= \vm 0$ for the optimal feedback \eqref{eq:MPC_control_law}, the following Lipschitz estimates hold $ \| \vm f(\vm x, \vm u)\| = \|[\vm f_i(\vm x_i, \vm u_i, \Ni{x})]_{\agents}\| \leq L_f (\|\vm x\| + \|\vm u\|)$ and $ \vm \kappa\inds(\vm x; \initstate) \leq L_{\kappa\inds} \| \vm x\|$ for all $\vm x(t) \in \mathbb{X}$ and $\vm u(t) \in \mathbb{U}$ with $L_f$, $L_{\kappa\inds}<\infty$. An upper bound on the optimal state trajectory $\vm x\inds(\tau, \initstate)$ with $\vm x\inds(0) = \initstate$ can be obtained based on Assumption \ref{ass:cost_Lipschitz} and Gronwall's inequality
\begin{align} \label{eq:state_upper_bound}
	\|\vm x\inds(\tau; \initstate)\| &                                                                                                                                                                                                                                                                                                                                                                                                                                                                                                                                                               \leq \|\initstate\| + \int_0^\tau \|\vm f(\vm x\inds(s; \initstate), \vm u\inds(s; \initstate))\|\, \dd s \leq \|\initstate\| + \int_0^\tau \|\vm f(\vm x\inds(s; \initstate), \vm \kappa\inds(\vm x\inds(s; \initstate); \initstate))\|\, \dd s \\
	&\leq \|\initstate\| +L_f \int_0^\tau \|\vm x\inds(s;\initstate)\| + \|\vm \kappa\inds(\vm x\inds(s; \initstate); \initstate))\|\, \dd s \leq \|\initstate\|+  L_f(1 + L_{\kappa\inds}) \int_0^\tau \|\vm x\inds(s;\initstate)\|\, \dd s  \leq\|\initstate\| \mathrm{e}^{\hat L \tau } \nonumber
\end{align}
with $\hat L :=  L_f(1 + L_{\kappa\inds})$. Similarly, a lower bound can be found by the inverse Gronwall inequality
\begin{align} \label{eq:state_lower_bound}
	\|\vm x\inds(\tau; \initstate)\| &\geq \|\initstate\| - \int_0^\tau \|\vm f(\vm x\inds(s; \initstate), \vm u\inds(s; \initstate))\|\, \dd s \geq\|\initstate\| \mathrm{e}^{-\hat L \tau }\,.
\end{align}
The difference between two optimal state trajectories $\vm x\inds(\tau; \vm {\munderbar x}_{k+1})$ and $\vm x\inds(\tau; \vm {\munderbar x}_k)$ can be estimated as follows  
\begin{align}
	\|\vm x\inds(\tau; \vm {\munderbar x}_{k+1}) -\vm x\inds(\tau; \vm {\munderbar x}_k) \| &\leq \| \vm {\munderbar x}_{k+1} - \initstate \| + L_f \int_0^\tau \|\vm x\inds(s; \vm {\munderbar x}_{k+1}) -\vm x\inds(s; \vm {\munderbar x}_k)\| + \| \vm \kappa\inds(\vm x\inds(s; \vm {\munderbar x}_{k+1}); \vm {\munderbar x}_{k+1}) - \vm \kappa\inds(\vm x\inds(s; \vm {\munderbar x}_k); \vm {\munderbar x}_k ) \|\, \dd s \nonumber \\
	&\leq \bar L\|\vm {\munderbar x}_{k+1} - \initstate\| + \hat L \int_0^\tau \|\vm x\inds(s; \vm {\munderbar x}_{k+1}) -\vm x\inds(s; \vm {\munderbar x}_k)\|\, \dd s
\end{align}
with $\bar L : = (1 + L_f L_{k\inds})$. Applying Gronwall's inequality and the $L_\infty$-norm leads to 
\begin{equation} \label{eq:optstate_bound}
	\|\vm x\inds(\cdot; \vm {\munderbar x}_{k+1}) -\vm x\inds(\cdot; \vm {\munderbar x}_k) \|_{L_\infty} \leq \bar L \mathrm{e}^{\hat L T} \|\vm {\munderbar x}_{k+1} - \initstate\| \leq L_x  \|\vm {\munderbar x}_{k+1} - \initstate\|
\end{equation}
with $L_x :=\bar L \mathrm{e}^{\hat L T}$. 
Similar to \eqref{eq:optstate_bound}, a bound between two optimal adjoint state trajectories $\vm \lambda \inds(\tau; \vm {\munderbar x}_{k+1})$ and $\vm \lambda\inds(\tau; \vm {\munderbar x}_k)$ can be found by integration of the adjoint dynamics \eqref{eq:opt_adjointdyn} in reverse time with the notation $y_r(\tau) := y(T - \tau) $ for some trajectory $y(\tau),\, \timetau$, i.e.
\begin{align} \label{eq:bound_opt_lambda}
	&\|\vm \lambda_r\inds(\tau; \vm {\munderbar x}_{k+1}) -\vm \lambda_r\inds(\tau; \vm {\munderbar x}_k) \| \leq L_V \|\vm{x}_r\indq(0;\vm{\munderbar x}_{k+1}) - \vm{x}_r\indq(0;\vm{\munderbar x}_{k})\| \nonumber \\
	&+ L_G\int_0^\tau \|\vm x_r\inds(s; \vm{\munderbar x}_{k+1}) -\vm x_r\inds(s; \vm{\munderbar x}_k) \| + \| \vm \kappa\inds(\vm x_r\inds(s; \vm {\munderbar x}_{k+1}); \vm {\munderbar x}_{k+1}) - \vm \kappa\inds(\vm x_r\inds(s; \vm {\munderbar x}_k); \vm {\munderbar x}_k ) \| +  \|\vm \lambda_r\inds(s; \vm{\munderbar x}_{k+1})) - \vm \lambda_r\inds(s; \vm{\munderbar x}_k))\|\, \dd s \nonumber \\
	&\leq L_G L_{\kappa\inds} \| \vm x_{k+1} - \vm x_k \| + (L_V+TL_G(1+L_{\kappa\inds}))	\|\vm x\inds(\cdot; \vm {\munderbar x}_{k+1}) -\vm x\inds(\cdot; \vm {\munderbar x}_k) \|_{L_\infty} + L_G \int_0^\tau \| \vm \lambda\inds(s; \vm{\munderbar x}_{k+1})) - \vm \lambda\inds(s; \vm{\munderbar x}_k))\|\, \dd s \nonumber\\
	&\leq (L_G L_{\kappa\inds} + L_x(L_V+TL_G(1+L_{\kappa\inds}))\mathrm{e}^{L_GT}\| \vm {\munderbar x}_{k+1} - \initstate\| \leq L_{\lambda}\| \vm {\munderbar x}_{k+1} - \initstate\|
\end{align}
with $L_{\lambda}:=(L_G L_{\kappa\inds} + L_x(L_V+TL_G(1+L_{\kappa\inds}))\mathrm{e}^{L_GT}$. Note that the optimal adjoint states $\vm \lambda\inds$ are defined on the compact set $\mathbb{X}_\lambda$ such that the finite Lipschitz constant $L_G$ in the second line of \eqref{eq:bound_opt_lambda} exists.

Utilizing \eqref{eq:state_upper_bound} and \eqref{eq:state_lower_bound}, an upper bound on the optimal cost can be found by using the bounds for the terminal and integral costs \eqref{eq:bound_costs}
\begin{align}\label{eq:upper_bound_opt_cost}
	J\inds(\initstate) &\leq M_V \|\vm x\inds(T;\initstate)\|^2 + M_l \int_0^T \|\vm x \inds(\tau; \initstate)\|^2 + \|\vm u\inds(\tau; \initstate)\|^2\, \dd \tau \leq M_V \mathrm{e}^{2\hat L T} \|\initstate\| + M_l \| \initstate\|^2 \int_0^T \mathrm{e}^{2\hat L \tau } + L_{k\inds}^2 \mathrm{e}^{2 \hat L T}\, \dd \tau \nonumber \\
	&= M_J \|\initstate\|^2
\end{align}
with $M_J:= M_V \mathrm{e}^{2\hat L T} + \frac{M_l}{2 \hat L} (\mathrm{e}^{2\hat L T}(1 + L_{k\inds}^2) - 1 -L_{k\inds}^2) $ as well as a lower bound 
\begin{align} \label{eq:lower_bound_opt_cost}
	J^*(\initstate) &\geq m_l \int_0^T \|\vm x\inds(\tau; \initstate)\|^2\, \dd \tau \geq m_l \|\initstate\|^2 \int_0^T \mathrm{e}^{-2\hat L \tau } =m_J \|\initstate\|^2 
\end{align}
with $m_J:= \frac{m_l}{2\hat L}(1- \mathrm{e}^{-2\hat L T})$. Additionally, the integral in \eqref{eq:th1} can be lower bounded by 
\begin{align} \label{eq:bound_opt_integral}
	\int_0^{\Delta t} l(\vm x\inds(\tau; \initstate), \vm u\inds(\tau; \initstate)\,\dd \tau &\geq m_l \int_0^{\Delta t} \|\vm x\inds(\tau; \initstate)\|^2\,\dd \tau \geq a J\inds(\initstate)
\end{align}
with $a:= \frac{m_l(1-\mathrm{e}^{-2\hat L \Delta t})}{2 \hat L M_J}$.
\end{document}